\newtheorem{theorem}{Theorem}[section]
\newtheorem{lemma}[theorem]{Lemma}
\newtheorem{corollary}[theorem]{Corollary}
\newtheorem{proposition}[theorem]{Proposition}
\newtheorem{conjecture}[theorem]{Conjecture}
\theoremstyle{definition}
\newtheorem{definition}[theorem]{Definition}
\newtheorem{notation}[theorem]{Notation}
\newtheorem{remark}[theorem]{Remark}
\newtheorem{example}[theorem]{Example}
\newtheorem{question}[theorem]{Question}
\def\C{{\mathbb C}}
\def\P{{\mathbb P}}
\def\Q{{\mathbb Q}}
\def\R{{\mathbb R}}
\def\Z{{\mathbb Z}}
\def\a{{\bf a}}
\def\cO{{\mathcal{O}}}
\def\cS{{\mathcal S}}
\def\Q{{\mathbb{Q}}}
\def\d{{\bf d}}
\def\operatorname#1{\mathop{\rm #1}\nolimits}
\def\codim{\operatorname{codim}}
\def\deg{\operatorname{deg}}
\def\NE{{\operatorname{NE}}}
\def\ME{{\operatorname{ME}}}
\def\Nef{{\operatorname{Nef}}}
\def\Eff{{\operatorname{Eff}}}
\def\Mov{{\operatorname{Mov}}}
\newcommand{\cNE}[1]{\overline{\NE}}
\newcommand{\cME}[1]{\overline{\ME}}
\newcommand{\cEff}{\overline{\Eff}}
\newcommand{\cMov}{\overline{\Mov}}
\def\Sp{\operatorname{Sp}}
\begin{document}
%\pagewiselinenumbers

\title{Varieties with nef diagonal}

\author{Taku Suzuki}
\address{Faculity of Education, Utsunomiya University, 350 Mine-machi, Utsunomiya, Tochigi 321-8505, Japan} 
\email{taku.suzuki@cc.utsunomiya-u.ac.jp}
\thanks{}
\author{Kiwamu Watanabe}
\date{\today}
\address{Course of Matheorematics, Programs in Matheorematics, Electronics and Informatics, 
Graduate School of Science and Engineering, Saitama University.
Shimo-Okubo 255, Sakura-ku Saitama-shi, 338-8570, Japan.}
\email{kwatanab@rimath.saitama-u.ac.jp}
\thanks{The second author is partially supported by JSPS KAKENHI Grant Number 17K14153.}
\subjclass[2010]{14C25, 14J40, 14J45, 14M10, 14M17.}
\keywords{Nef diagonal, Nef tangent bundle, Algebraic cycles}

\begin{abstract}  For a smooth projective variety $X$, we consider when the diagonal $\Delta_X$ is nef as a cycle on $X\times X$. In particular, we give a classification of complete intersections and smooth del Pezzo varieties where the diagonal is nef. We also study the nefness of the diagonal for spherical varieties. 
\end{abstract}

\maketitle

\section{Introduction} 
Any smooth projective variety $X$ comes equipped with the tangent bundle $T_X$. So the tangent bundle is often used for classification problems of algebraic varieties. 
One of the most important results in this direction is Mori's solution of the Hartshorne-Frankel conjecture characterizing projective spaces as the only smooth projective varieties with ample tangent bundle \cite{Mori1}. As a generalization of Mori's result, F. Campana and T. Peternell conjectured that the only complex smooth Fano varieties with nef tangent bundle are rational homogeneous \cite{CP1}. We call it the {\it CP-conjecture} for short. The CP-conjecture has been varified up to dimension five and for certain special classes of varieties. We refer the reader to \cite{MOSWW} and \cite{Kane2}. 

Based on the fact $T_X$ is the normal bundle of the diagonal $\Delta_X$ in the product $X\times X$, B. Lehmann and C. Ottem recently studied how the geometry of $X$ is reflected in the positivity properties of $\Delta_X$ in their paper \cite{LO}. For example, if $T_X$ is nef, then so is the diagonal $\Delta_X$ as a cycle on $X\times X$. However in general the converse is not true. For instance, any fake projective space, which is a smooth projective variety with the same Betti numbers as $\P^n$ but not isomorphic to $\P^n$, has nef diagonal. This means that the class of varieties with nef diagonal is strictly larger than that of varieties with nef tangent bundle. The nefness of the diagonal imposes restrictions on the structure of varieties. For instance, if $X$ is a projective variety with nef diagonal, then every (possibly higher-codimensional) pseudoeffective cycle is nef (see \cite[Theorem 1.4, Proposition~4.1]{LO}). This yields that every extremal contraction of $X$ is of fiber type (Proposition~\ref{prop:bir}). Moreover, if the diagonal $\Delta_X$ is nef and big as a cycle on $X\times X$, then the Picard number of $X$ is one (Proposition~\ref{prop:nefbig:Pic}).

One of the purposes of this paper is to study the nefness of the diagonal for complete intersections of hypersurfaces and smooth del Pezzo varieties:     

\begin{theorem}[Theorem~\ref{them:ci}, Theorem~\ref{thm:dPnef}]\label{MT} 
Let $X$ be a smooth complex projective variety. Assume that the diagonal $\Delta_X$ is nef as a cycle on $X \times X$. Then the following holds.
\begin{enumerate}
\item If $X$ is a complete intersection of hypersurfaces, then $X$ is a projective space, a quadric or an odd-dimensional complete intersection of two quadrics.
\item If $X$ is a del Pezzo variety, then $X$ is one of the following:
\begin{enumerate}
\item an odd-dimensional complete intersection of two quadrics,
\item the Grassmannian $G(2, \C^5)$,
\item a $3$-dimensional linear section of the Grassmannian $G(2, \C^5) \subset \P^9$ embedded via the Pl\"ucker embedding,
\item $\P^1\times \P^1 \times \P^1$,
\item $\P^2 \times \P^2$ or
\item $\P(T_{\P^2})$.
\end{enumerate}
\end{enumerate}
\end{theorem}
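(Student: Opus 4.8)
The plan is to combine a short list of structural constraints with Fujita's structure theory, reading off the classification case by case. Three facts drive everything. First, a nef tangent bundle forces $\Delta_X$ nef, so every rational homogeneous space and every product of such is automatically admissible. Second, by \cite{LO} the nefness of $\Delta_X$ implies that every pseudoeffective cycle class on $X$ is nef, and (Proposition~\ref{prop:bir}, Proposition~\ref{prop:nefbig:Pic}) that $X$ admits no birational extremal contraction and has $\rho(X)=1$ once $\Delta_X$ is in addition big. Third, for all of the varieties in play the middle cohomology is explicitly computable, so the engine for every negative result is to exhibit a pseudoeffective class of negative self-intersection, which then contradicts the pseudoeffective-implies-nef criterion. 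The clearest source of such a class is even-degree primitive cohomology: if $\dim X = 2m$ and $H^{2m}_{\mathrm{prim}}(X)$ is a nonzero Hodge–Tate piece, the Hodge--Riemann relations make its intersection form $(-1)^m$-definite, producing an algebraic middle class of negative square; the analogous odd-degree (odd-weight) primitive cohomology carries no $(p,p)$-classes and so does not yield such an obstruction. This even/odd dichotomy is what separates the surviving families from the excluded ones.

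For part (1) I would first reduce to Picard number one: a smooth complete intersection of dimension $\ge 3$ has $\rho(X)=1$ by Lefschetz, while a complete-intersection surface other than $\P^2$ or a smooth quadric carries a curve of negative self-intersection (a $(-1)$-curve on the cubic surface; the transcendental part of $H^2$ for a quartic $K3$), which is pseudoeffective but not nef. For the hypersurface cases I rely on and extend the analysis of \cite{LO}, which leaves only $\P^n$ and quadrics. The genuinely new content is codimension $\ge 2$: computing the primitive middle cohomology via the Jacobian ring, it vanishes only for $\P^n$ and quadrics, and for an intersection of two quadrics it is concentrated in the middle weight. When $\dim X$ is even this middle piece is Hodge--Tate with $(-1)^m$-definite form, giving the negative-square obstruction above; when $\dim X$ is odd it has level one with no $(p,p)$-classes, and these odd intersections of two quadrics do have nef diagonal, a fact I would take from the explicit cycle analysis in \cite{LO}. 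All other multidegrees produce even-degree primitive classes and are excluded.

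For part (2) I would march through Fujita's classification of del Pezzo varieties organized by the degree $d=H^{\dim X}$, reusing part (1) wherever the variety is a complete intersection. Degrees $3$ and $4$ are exactly a cubic hypersurface, respectively an intersection of two quadrics, so part (1) applies verbatim: the cubic is excluded and the intersection of two quadrics survives only in odd dimension, giving (a). Degrees $1$ and $2$ (a weighted-degree-$6$ hypersurface and a double cover of $\P^{\dim X}$ branched along a quartic) have Picard number one with nonvanishing even-degree primitive cohomology, hence are excluded by the negative-square obstruction. In degree $5$ the variety is a linear section of $G(2,\C^5)$: the full Grassmannian is homogeneous, hence admissible (b); the three-dimensional section $V_5$ has cohomology of Tate type with no intermediate Jacobian and is verified directly to have nef diagonal (c); while the four- and five-dimensional sections carry primitive middle classes and are ruled out. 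Degree $6$ yields precisely $\P^1\times\P^1\times\P^1$, $\P^2\times\P^2$ and $\P(T_{\P^2})$, all rational homogeneous or products thereof and hence admissible; degree $7$ is $\mathrm{Bl}_p\P^3$, excluded by Proposition~\ref{prop:bir} since it has a divisorial contraction; the remaining higher-degree cases do not produce new examples.

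The hard part will be two-fold. On the negative side, the subtle point is not merely detecting nonzero primitive cohomology but guaranteeing that the resulting negative-square class is genuinely represented by a pseudoeffective cycle, which is what allows the pseudoeffective-implies-nef criterion to bite; this is exactly where the parity of $\dim X$ for intersections of two quadrics, and the separation of the three-dimensional from the four- and five-dimensional sections of $G(2,\C^5)$, must be decided by a careful Hodge--Riemann computation. On the positive side, the genuinely new phenomenon is that $V_5$ and the odd-dimensional intersections of two quadrics carry a nef diagonal \emph{without} having a nef tangent bundle, so the direct verification of nefness of $\Delta_X$ for these two families --- through an explicit description of their cones of effective cycles --- is the real content beyond the homogeneous examples.
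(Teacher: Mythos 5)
Your structural shell (Fujita's classification by degree, reusing part (1) for degrees $3$ and $4$, excluding degree $7$ by Proposition~\ref{prop:bir}, accepting the homogeneous cases and the fake projective space) agrees with the paper, but the engine you propose for every negative result --- a pseudoeffective class of negative self-intersection on $X$ itself, produced by Hodge--Riemann on primitive middle cohomology --- fails on precisely the cases that carry the weight of the theorem. First, for an $n$-dimensional complete intersection the Lefschetz hyperplane theorem puts \emph{all} primitive cohomology in the middle degree $n$, so when $n$ is odd there are no even-degree primitive classes at all; your claim that ``all other multidegrees produce even-degree primitive classes and are excluded'' is therefore false, and your even/odd dichotomy would wrongly let the cubic threefold, the quintic threefold, and every odd-dimensional complete intersection with some degree $\geq 3$ survive. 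The paper excludes them by working on $X\times X$, not on $X$: $\deg \Delta_X^2=\deg c_n(X)$ equals the topological Euler characteristic (Proposition~\ref{prop:diag^2}, Remark~\ref{rem:euler}), and the computations of Propositions~\ref{prop:hyp}, \ref{prop:ci}, \ref{prop:quad} show this is negative in exactly these odd-dimensional cases --- the pseudoeffective class of negative square is the diagonal itself. Second, in even dimensions your own sign $(-1)^m$ defeats you: for $\dim X\equiv 0 \pmod 4$ the primitive $(m,m)$ form is \emph{positive} definite, so no negative-square class exists; e.g.\ on the degree-$5$ del Pezzo $4$-fold the intersection form on $H^{2,2}$ is positive definite, and the actual obstruction (Proposition~\ref{prop:deg5:4fold}) is a pair of distinct effective planes with $\deg \Pi\cdot\Lambda=-1$, a negative \emph{mutual} intersection invisible to your framework (the same applies to the even-dimensional $(2,2)$, Lemma~\ref{lem:except}). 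And for $\dim X\equiv 2\pmod 4$ the primitive piece is typically not Hodge--Tate (quartic K3, quintic surface), and a very general such surface has Picard rank one, so there is no algebraic --- let alone pseudoeffective --- negative-square class on $X$; the paper instead uses the finite-cover criterion of Proposition~\ref{prop:LO} together with Corollary~\ref{cor:even}, and for K3 surfaces cites \cite[Theorem~6.6]{LO}. The same failures afflict your treatment of del Pezzo degrees $1$ and $2$, which the paper handles by computing $\deg c_n(X)$ for weighted hypersurfaces (Propositions~\ref{prop:weighted}, \ref{prop:deg1}, \ref{prop:deg2}) and again invoking Proposition~\ref{prop:LO} via the double-cover structure.

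Two further factual errors. You assert that odd-dimensional intersections of two quadrics have nef diagonal ``from the explicit cycle analysis in \cite{LO}'': this is not known --- the paper poses it as an open problem (Remark~\ref{rem:22}, Question~\ref{ques:22}); since the theorem is only the implication ``$\Delta_X$ nef $\Rightarrow X$ is on the list,'' nothing needs to be proved there, but you cannot cite nefness as a fact. Similarly, the $5$-dimensional linear section of $G(2,\C^5)$ has no odd cohomology whatsoever (it is the odd symplectic Grassmannian $G_{\omega}(2,\C^5)$, with cohomology spanned by Schubert classes), so ``primitive middle classes'' cannot rule it out; the paper needs Pech's Pieri formula to exhibit effective classes with $\deg(\tau_{(3,-1)}\cdot\tau_{(2,1)})=-1$ (Proposition~\ref{prop:dp:cone}). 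In short, the case organization matches the paper, but the Euler-characteristic computations on $X\times X$ and the explicit Schubert-calculus computations --- the actual content of the proof --- are missing, and the Hodge--Riemann mechanism you propose cannot replace them.
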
 
We know all varieties in the above theorem have nef diagonal except odd-dimensional complete intersections of two quadrics (see Remark~\ref{rem:22} and Question~\ref{ques:22}). Moreover this theorem yields the following:

\begin{corollary}[Corollary~\ref{cor:ci:big}, Corollary~\ref{cor:dp:big}] Let $X$ be a smooth complex projective variety. Assume that the diagonal $\Delta_X$ is nef and big as a cycle on $X \times X$. Then the following holds.
\begin{enumerate}
\item $X$ is a complete intersection of hypersurfaces if and only if $X$ is a projective space or an odd-dimensional quadric.
\item $X$ is a del Pezzo variety if and only if $X$ is a $3$-dimensional linear section of the Grassmannian $G(2, \C^5) \subset \P^9$.
\end{enumerate}
In both cases, $X$ is a projective space or a fake projective space.
\end{corollary}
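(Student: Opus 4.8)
The plan is to obtain both equivalences from the nef classification of Theorem~\ref{MT} by feeding in the extra hypothesis of bigness. A nef and big class is in particular nef, so any $X$ satisfying the hypothesis already appears on one of the two lists of Theorem~\ref{MT}, and the essential work is the forward direction: deciding which entries retain a \emph{big} diagonal. The reverse inclusions are immediate, since projective spaces and quadrics are complete intersections and the $3$-dimensional linear section $V_5\subset\P^9$ of $G(2,\C^5)$ is a del Pezzo variety; moreover each surviving variety has the Betti numbers of a projective space (trivially for $\P^n$, by the standard computation for odd quadrics, and because $V_5$ satisfies $b_3=0$), so it is $\P^n$ or a fake projective space, which is the closing assertion and also shows the lists are sharp.

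For the forward direction I would prune the two lists in two passes. The first invokes Proposition~\ref{prop:nefbig:Pic}: bigness forces $\rho(X)=1$, which removes the del Pezzo varieties $\P^1\times\P^1\times\P^1$, $\P^2\times\P^2$ and $\P(T_{\P^2})$, together with the quadric surface $Q^2\cong\P^1\times\P^1$. Every remaining candidate has Picard number one — projective spaces, quadrics of dimension $\geq 3$, odd-dimensional complete intersections of two quadrics, $G(2,\C^5)$, and $V_5$ — so Proposition~\ref{prop:nefbig:Pic} alone does not finish the job.

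The second pass is cohomological and carries the real content: a nef and big diagonal forces $X$ to have the rational cohomology of a projective space, i.e. $b_{2i}(X)=1$ for every $i$ and $b_{\mathrm{odd}}(X)=0$. Granting this, one need only read off Betti numbers. An even-dimensional quadric $Q^{2k}$ with $k\geq 2$ has $b_{2k}=2$; a smooth complete intersection of two quadrics of odd dimension $2m-1$ has nonvanishing middle cohomology $H^{2m-1}$, carrying a nontrivial Hodge structure of odd weight; and $G(2,\C^5)$ has $b_4=2$, spanned by the Schubert classes $\sigma_2$ and $\sigma_{1,1}$. Each violates the criterion and is excluded, leaving precisely $\P^n$ and the odd-dimensional quadrics in part (1), and $V_5$ in part (2).

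The main obstacle is the cohomological criterion used in the second pass. I would derive it from the Lehmann--Ottem framework \cite[Theorem~1.4, Proposition~4.1]{LO}, in which a nef diagonal makes every pseudoeffective cycle on $X\times X$ nef; bigness then places $\Delta_X$ in the interior of the pseudoeffective cone of middle-dimensional cycles, and decomposing the diagonal by the K\"unneth formula shows that this interiority cannot persist once $X$ carries primitive cohomology beyond the powers of a polarization, forcing each even Betti number to equal $1$ and all odd Betti numbers to vanish. Making this K\"unneth argument precise, and confirming the nonvanishing of the middle cohomology of the complete intersections of two quadrics, are the steps that demand genuine care.
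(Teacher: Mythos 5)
Your overall architecture matches the paper's: both directions reduce to the nef classification of Theorem~\ref{MT}; the converse follows from Proposition~\ref{prop:fps} because $\P^n$, the odd quadrics and the quintic del Pezzo threefold are projective spaces or fake projective spaces; and the product-type del Pezzo varieties (and $Q^2$) are removed by Proposition~\ref{prop:nefbig:Pic}, exactly as in the paper. The gap is in your ``second pass''. The criterion it rests on --- that a nef and big diagonal forces $b_{2i}(X)=1$ and $b_{\mathrm{odd}}(X)=0$ --- is not established in the paper or in \cite{LO}, and in this generality it is essentially an open question; your K\"unneth sketch does not deliver it. Bigness is a statement about the cone $\overline{\Eff}_n(X\times X)$ inside $N_n(X\times X)$, the space of \emph{algebraic} cycle classes modulo numerical equivalence. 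For general $X$ this cone is not generated by K\"unneth product classes, so a vanishing K\"unneth coefficient does not place $\Delta_X$ on the boundary of the cone; worse, for odd-degree (transcendental) cohomology the relevant K\"unneth components of $\Delta_X$ are not even known to be algebraic classes. The argument does work when $X$ is homogeneous: for $Q^{2k}$ and $G(2,\C^5)$ the effective cone of $X\times X$ is generated by products of Schubert classes (e.g.\ by \cite{FMSS}), these products form a basis of $N_*(X\times X)$, and one reads off that $\Delta_X$ has zero coefficients on some extremal rays --- but this is precisely the computation of \cite[Sections~7.1, 7.4]{LO} that the paper cites. It gives you nothing for the one remaining case, the odd-dimensional complete intersection of two quadrics, where the extra cohomology is transcendental and of odd degree.

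For that case the paper's argument is elementary, and you already hold every ingredient: by Remark~\ref{rem:22} the topological Euler characteristic of such an $X$ vanishes, so Proposition~\ref{prop:diag^2} gives $\deg\Delta_X^2=\deg c_n(X)=0$. On the other hand, a class that is simultaneously nef and big must satisfy $\deg\Delta_X^2>0$, since a big class lies in the interior of the pseudoeffective cone and hence pairs strictly positively with every nonzero nef class --- here one tests $\Delta_X$ against itself. So nefness alone already forbids bigness, with no cohomological criterion needed. If you replace your second pass by this self-intersection argument for the $(2,2)$ case, and by the Schubert-basis computation (or the citation of \cite[Sections~7.1, 7.4]{LO}) for the even-dimensional quadrics and for $G(2,\C^5)$, your proof closes and then coincides with the paper's.
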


As a byproduct of Theorem~\ref{MT} in a different direction, we may conclude that the CP-conjecture holds for complete intersections of hypersurfaces (Corollary~\ref{cor:CPci}) and del Pezzo varieties (Corollary~\ref{cor:CP}). For complete intersections, it was firstly proved by R. Pandharipande \cite{Pan} (see also \cite{Fur}). On the other hand, it is easy to check that the CP-conjecture holds for almost all smooth del Pezzo varieties. However, to the best of our knowledge, there is no literature stating the CP-conjecture holds for del Pezzo varieties of degree one and two.
%As a byproduct, we may conclude that the CP-conjecture holds for complete intersections and del Pezzo varieties (Theorem~\ref{them:CP}). For complete intersections, it was proved by R. Pandharipande \cite{Pan} (see also \cite{Fur}). On one hand, it is easy to check that the CP-conjecture holds for almost all smooth del Pezzo varieties. However, to the best of our knowledge, there is no literature stating the CP-conjecture holds for del Pezzo varieties of degree one and two.
%it is known almost all del Pezzo varieties are described as linear sections of homogeneous varieties. In these cases, 

We also study spherical varieties with nef diagonal. The following directly follows from \cite{Li}:
\begin{theorem}[Proposition~\ref{prop:Sph}]\label{them:spherical} For any smooth projective spherical variety $X$, the following are equivalent to each other.
\begin{enumerate}
\item The diagonal $\Delta_X$ is nef.
\item For any $k$, the pseudoeffective cone of $k$-cycles coincides with the nef cone of $k$-cycles.
\end{enumerate}
\end{theorem}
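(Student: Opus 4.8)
The plan is to reduce nefness of $\Delta_X$ on $X\times X$ to a positivity statement for cycles on $X$, and then to feed in the explicit description of the cones of cycles on spherical varieties from \cite{Li}. Set $n=\dim X$ and fix a connected reductive group $G$ with Borel subgroup $B$ acting on $X$ with a dense $B$-orbit. The computational input is the elementary identity
\[
[\Delta_X]\dt([V]\times[W]) = [V]\dt[W]
\]
for subvarieties $V,W\subseteq X$ of complementary dimension, the right-hand side being the intersection number on $X$; by linearity the same holds for classes, so $[\Delta_X]\dt(\alpha\times\beta)=\alpha\dt\beta$ whenever $\alpha\in\cEff_k(X)$ and $\beta\in\cEff_{n-k}(X)$.

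From this, one direction is immediate and recovers the general \cite[Theorem~1.4, Proposition~4.1]{LO}: if $\Delta_X$ is nef, then for $\alpha\in\cEff_k(X)$ and every $\beta\in\cEff_{n-k}(X)$ we get $\alpha\dt\beta=[\Delta_X]\dt(\alpha\times\beta)\ge 0$, so $\alpha$ is nef, whence $\cEff_k(X)\subseteq\Nef_k(X)$ for all $k$. The reverse reduction is where sphericity is essential. I would first observe that $X\times X$ is a spherical $(G\times G)$-variety with Borel subgroup $B\times B$, and that each $(B\times B)$-orbit is a product $(Bx_1)\times(Bx_2)$; thus the $(B\times B)$-invariant subvarieties of $X\times X$ are exactly the products $\overline{Bx_1}\times\overline{Bx_2}$. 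By the generation of the pseudoeffective cone of a spherical variety by its invariant subvarieties in \cite{Li}, $\cEff_\bullet(X\times X)$ is generated by such products, i.e.\ by exterior products of generators of $\cEff_\bullet(X)$. Testing $\Delta_X$ against these generators and applying the identity turns each pairing into an intersection number $[V]\dt[W]$ of complementary-dimensional $B$-cycles on $X$. Therefore $\Delta_X$ is nef if and only if $\cEff_k(X)\subseteq\Nef_k(X)$ for every $k$.

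To conclude I would upgrade this inclusion to the equality in (2) by appealing again to \cite{Li}: for a smooth projective spherical variety the cones $\cEff_k$ and $\Nef_k$ are rational polyhedral with $B$-cycle generators, and in particular every nef class is pseudoeffective, so $\Nef_k(X)\subseteq\cEff_k(X)$ for all $k$. Granting this, $\cEff_k=\Nef_k$ for all $k$ holds precisely when $\cEff_k\subseteq\Nef_k$ for all $k$, which by the previous paragraph is equivalent to the nefness of $\Delta_X$. This yields the equivalence of (1) and (2).

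The main obstacle is the reduction of the pseudoeffective cone of $X\times X$ to products of cones on the factors. For a general smooth projective $X$ this fails badly, since $\cEff_\bullet(X\times X)$ contains many non-product classes (the diagonal itself being one), so the reduction rests entirely on the spherical structure and on Li's generation of $\cEff_\bullet$ by $(B\times B)$-orbit closures. Checking that $X\times X$ is again spherical and that Li's description applies to it verbatim, together with the numerical bookkeeping identifying $N_\bullet(X\times X)$ compatibly with the exterior product so that the intersection identity is valid numerically, is the technical heart; the remaining steps are then formal.
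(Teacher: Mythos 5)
Your proof is correct, and in the direction $(1)\Rightarrow(2)$ it has the same structure as the paper's: the Lehmann--Ottem implication (nef diagonal forces every pseudoeffective class to be nef, which you reprove directly from the identity $\deg(\Delta_X\dt(\alpha\times\beta))=\deg(\alpha\dt\beta)$) combined with Li's inclusion $\Nef_k(X)\subseteq\Eff_k(X)=\cEff_k(X)$ for spherical varieties. The genuine difference is in $(2)\Rightarrow(1)$: the paper disposes of this direction by citing \cite[Corollary~3.5]{Li} as a black box ($\Nef_k(X)=\Eff_k(X)$ for all $k$ implies $\Nef_k(X\times X)=\Eff_k(X\times X)$, so the effective class $\Delta_X$ is nef), whereas you prove the needed statement directly: $X\times X$ is spherical for $G\times G$ with Borel $B\times B$, its $(B\times B)$-orbit closures are exactly the products $\overline{Bx_1}\times\overline{Bx_2}$, hence by the generation theorem for effective cycles on spherical varieties the cone $\cEff_{\dim X}(X\times X)$ is spanned by exterior products of $B$-orbit closures, against which $\Delta_X$ pairs as an intersection number of complementary-dimensional effective (hence, by (2), nef) classes on $X$. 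One attribution point: the generation statement you credit to \cite{Li} is due to Fulton--MacPherson--Sottile--Sturmfels, \cite[Theorem~1.3]{FMSS}, which is exactly what the paper invokes for its third equivalent condition; your intermediate step (nefness of $\Delta_X$ is equivalent to non-negative pairing of $B$-orbit closures in complementary dimensions) is precisely that condition (3) of the paper's Proposition~\ref{prop:Sph}. What the paper's route buys is brevity; what yours buys is self-containedness (an inline proof of the special case of Li's corollary actually needed) and transparency about where sphericity enters, namely that finiteness of $B$-orbits and the product orbit structure ensure $\cEff_\bullet(X\times X)$ has only product generators, which is exactly what fails for a general smooth projective variety.
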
 

As an example of spherical varieties, we prove the odd symplectic Grassmannian of lines has non-nef diagonal provided that it is not homogeneous (Proposition~\ref{prop:OSGL}). We also give explicit descriptions of the nef cone and the pseudoeffective cone of cycles for a five-dimensional odd symplectic Grassmannian in Proposition~\ref{prop:dp:cone}. The result gives an answer to a question on cones of cycles of horospherical varieties raised by Q. Li (Remark~\ref{rem:Li}).

\section{Preliminaries}

Along this paper, we work over the complex number field. We employ the notation as in \cite{Har}. For a smooth complex projective variety $X$, we denote by $b_i(X)$ the $i$-th Betti number of the complex manifold $X^{an}$ associated to $X$. 
\if0
\begin{notation} For a set of positive integers $\d=(d_1, d_2, \ldots, d_r)$, a {\it complete intersection of type $\d$} means a complete intersection of hypersurfaces of degrees $d_1, \ldots, d_r$.
\end{notation}
\fi

\subsection{Cycles and cones}
For the intersection theory, we refer the reader to \cite{Ful}. Let $X$ be a smooth projective variety of dimension $n$.  A {\it $k$-cycle} on $X$ is a finite formal linear combination $\sum a_i [Z_i]$ where $Z_i$ are $k$-dimensional closed subvarieties of $X$ and $a_i$ are real numbers. The cycle $\sum a_i [Z_i]$ is {\it effective} if $a_i\geq 0$ for any $i$. The cycle $\sum a_i [Z_i]$ is {\it nef} if it has non-negative intersection numbers with all $(n-k)$-dimensional closed subvarieties. We denote by $A_k(X)=A^{n-k}(X)$ (resp. $N_k(X)=N^{n-k}(X)$) the group of $k$-cycles with real coefficients modulo rational equivalence (resp. numerical equivalence) on $X$. We may take the intersection product of cycles on the Chow ring $A(X):=\bigoplus A_k(X)$ or $N(X):=\bigoplus N_k(X)$. Since numerical equivalence is coarser than rational equivalence, we have a surjective ring homomorphism $A(X)\to N(X)$.
By \cite[Example~19.1.4]{Ful}, $N_k(X)$ is a finite-dimensional $\R$-vector space. We may define the degree homomorphism $\deg: {A}_0(X)\to \R$ (see \cite[Definition~1.4]{Ful}). For a zero-cycle $\sum a_i [P_i]$ ($P_i \in X$), $\deg \sum a_i [P_i]=\sum a_i$. For cycles $Z_i \in A^{k_i}(X)$ $(\sum_i^r k_i=n)$, their intersection product $Z_1 \cdot Z_2 \cdots Z_r$ is a zero-cycle. Then the intersection number of $Z_i$'s is the degree $\deg (Z_1 \cdot Z_2 \cdots Z_r)$. %By abuse of notation, we denote the intersection number by $Z_1Z_2\cdots Z_r$.

The {\it pseudoeffective cone} $\overline{{\rm Eff}}_k(X)=\overline{{\rm Eff}}^{n-k}(X)$ in $N_k(X)$ is the closure of the cone generated by effective $k$-cycles on $X$. A cycle in the pseudoeffective cone is called a {\it pseudoeffective cycle}. A {\it $k$-cycle} $Z$ on $X$ is {\it big} if $Z$ lies in the interior of $\overline{{\rm Eff}}_k(X)$. A {\it $k$-cycle} $Z$ on $X$ is {\it universally pseudoeffective} if $\pi^{\ast}Z$ is pseudoeffective for any morphism of projective varieties $f: Y \to X$ with $Y$ smooth (see \cite[Section~1.3]{FL2}). The {\it nef cone} ${\rm Nef}_k(X)={\rm Nef}^{n-k}(X)$ in $N_k(X)$ is the cone generated by nef $k$-cycles on $X$. Remark that the nef cone ${\rm Nef}_k(X)$ is the dual of the pseudoeffective cone $\overline{{\rm Eff}}^{k}(X)$ via the intersection pairing.

Let $E$ be a vector bundle of rank $r$ on $X$. For each $k=0, 1, \ldots, r$, the {\it $k$-th Chern class} $c_k(E) \in A^k(X)$ is defined by the relations
$$\sum_{k=0}^r(-1)^k\pi^{\ast}c_k(E)\xi^{r-k}=0~\text{and}~c_0(E)=1,
$$
where $\xi$ is the divisor associated to the tautological line bundle $\cO_{\P(E)}(1)$ and $\pi: \P(E) \to X$ is the natural projection. The {\it Chern polynomial} of $E$ is defined by
$$
c_t(E):=c_0(E)+c_1(E)t+ \cdots+ c_r(E)t^r.
$$

%We denote by $Z_k(X)$ the group of $k$-cycles on $X$. The quotient of $Z^k(X)\otimes_{\Z}\R$ by the numerically trivial $\R$-cycles is denoted by $N_k(X)$.
%A cycle of codimension $k$ on $X$ is {\it nef} if it has non-negative intersection numbers with all $k$-dimensional subvarieties. Let ${A}^k(X)$ be the group of $(n-k)$-dimensional cycles modulo rational equivalence on $X$. The Chow ring of $X$ is the graded ring $A^{\ast}(X)=\oplus_{k=0}^n A^k(X)$. We denote the top Chern class of $X$ by $c_n(X) \in { A}^n(X)$. We may define the degree homomorphism $\deg: {A}^n(X)\to \Z$ (see \cite[Definition~1.4]{Ful}).  
\if0Let $A_{k}(X)={A}^{n-k}(X)$ be the group of $k$-dimensional cycles with integer coefficients on $X$ modulo rational equivalence. The {\it Chow ring} of $X$ is the graded ring $A^{\ast}(X)=\bigoplus_{k=0}^n A^k(X)$.\fi 
\if0
\subsection{Chern classes and Segre classes} Let $E$ be a vector bundle of rank $r$ on a smooth projective variety $X$ of dimension $n$.  For each $k=0, 1, \ldots, r$, the {\it $k$-th Chern class} $c_k(E)$ is defined by the relations
$$\sum_{i=0}^rc_1\pi^{\ast}c_i(E)(\cO_{\P(E)}(1))^{r-k}=0~\text{and}~c_0(E)=1.
$$
For any integer $k$, the {\it $k$-th Segre class} $s_k(E) \in A_k(X)$ is defined by 
$$s_k(E)=\pi_{\ast}(c_1(\cO_{\P(E^{\vee})}(1))^{r+k-1}),
$$
where $\pi: \P(E^{\vee}) \to X$ is the natural projection.
By definition, we have 
$$s_k(E) \cap \alpha=\pi_{\ast}(c_1(\cO_{\P(E^{\vee})}(1))^{r+k-1}\cap \pi^{\ast}\alpha).$$
Furthermore, 
$$c_t(E)\cdot s_t(E)=1.
$$
\fi

\subsection{Properties of varieties with nef diagonal}\label{subsec:nef}

For a smooth projective variety $X$, the diagonal $\Delta_X \subset X \times X$ is said to be {\it nef} (resp. {\it big}) if $\Delta_X$ is nef (resp. {big}) as a cycle on $X \times X$.

\begin{proposition}[{\cite[Theorem~1.4, Proposition~4.1]{LO}}]\label{prop:pseffcone} Let $X$ be a smooth projective variety with nef diagonal. Then every pseudoeffective class on $X$ is nef.
\end{proposition}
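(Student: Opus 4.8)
The plan is to exploit the diagonal as the correspondence inducing the identity on cycles. I would write $\delta\colon X\to X\times X$ for the diagonal embedding, so that $[\Delta_X]=\delta_*[X]$, and let $p_1,p_2\colon X\times X\to X$ be the two projections, which satisfy $p_1\circ\delta=p_2\circ\delta=\id_X$. Since $\delta$ is a regular embedding of smooth varieties, the refined Gysin pullback $\delta^*\colon A^*(X\times X)\to A^*(X)$ is a ring homomorphism, and I would combine it with the projection formula.

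The heart of the argument is the identity $\deg\bigl([\Delta_X]\cdot(p_1^*\alpha\cdot p_2^*\beta)\bigr)=\deg(\alpha\cdot\beta)$ for a $k$-cycle $\alpha$ and an $(n-k)$-cycle $\beta$ on $X$. By the projection formula the left-hand side equals $\deg\bigl(\delta^*(p_1^*\alpha\cdot p_2^*\beta)\bigr)$, and since $\delta^*$ is multiplicative with $\delta^*p_i^*=(p_i\circ\delta)^*=\id$, this collapses to $\deg(\alpha\cdot\beta)$. Taking $\alpha=[Z]$ and $\beta=[W]$ for irreducible subvarieties $Z,W\subset X$ of complementary dimension, the class $p_1^*[Z]\cdot p_2^*[W]$ is exactly the class of the product subvariety $Z\times W\subset X\times X$, which has dimension $n$. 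Hence
\[
Z\cdot W \;=\; \deg\bigl([\Delta_X]\cdot[Z\times W]\bigr).
\]

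Now I would invoke the hypothesis. Because $\Delta_X$ is nef and $Z\times W$ is an $n$-dimensional subvariety of the $2n$-dimensional $X\times X$, the right-hand side is non-negative; thus $Z\cdot W\ge 0$ for every pair of complementary-dimensional subvarieties. Fixing $W$ and letting $Z$ range over all $k$-dimensional subvarieties shows that the functional ``intersect with $W$'' is non-negative on every effective $k$-cycle, hence, by linearity and continuity of the intersection pairing, on the whole closed cone $\overline{\Eff}_k(X)$. As this holds for every such $W$, every pseudoeffective $k$-cycle has non-negative intersection with all complementary-dimensional subvarieties, i.e. it is nef; since $k$ is arbitrary this proves the proposition.

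The one step demanding genuine care is the multiplicativity identity $\delta^*(p_1^*\alpha\cdot p_2^*\beta)=\alpha\cdot\beta$ in arbitrary codimension: this is not a mere dimension count but relies on $\delta$ being a regular embedding, so that the Gysin map is a well-defined ring homomorphism on the Chow ring of the smooth product, together with the compatibility of flat pullback with Gysin pullback. Once this is secured, the remainder is the formal duality between the pseudoeffective and nef cones plus the elementary fact that external products of effective cycles are effective. I therefore expect this intersection-theoretic bookkeeping, rather than any geometric input, to be the main obstacle.
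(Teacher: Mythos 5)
Your proof is correct, and it is essentially the standard argument behind the cited result: the identity $\deg([Z]\cdot[W])=\deg\bigl(\Delta_X\cdot[Z\times W]\bigr)$ for subvarieties of complementary dimension, followed by passing from effective cycles to their closure in $N_k(X)$. The paper itself gives no proof here (it imports the statement from \cite{LO}), and your intersection-theoretic bookkeeping — projection formula plus $\delta^*p_i^*=\id$ and $p_1^*[Z]\cdot p_2^*[W]=[Z\times W]$ — is exactly how Lehmann and Ottem prove it.
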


\begin{proposition}[{\cite[Proposition~8.1.12]{Ful}}]\label{prop:diag^2} For a smooth projective variety $X$, let $\Delta_X \subset X \times X$ be the diagonal of $X$. Then 
$$ \deg \Delta_X^2= \deg c_n(X).
$$ 
\end{proposition}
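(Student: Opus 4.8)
The plan is to use the classical self-intersection formula for the regular embedding provided by the diagonal, for which the relevant normal bundle is the tangent bundle. Write $\delta\colon X \to X\times X$, $x \mapsto (x,x)$, for the diagonal embedding, so that $\Delta_X = \delta(X)$ and $[\Delta_X] = \delta_*[X]$ is a class of codimension $n$ in $X\times X$. Since $X$ is smooth, $\delta$ is a regular closed embedding. First I would identify the normal bundle $N$ of $\delta$ with $T_X$: this follows from the normal exact sequence
$$0 \to T_{\Delta_X} \to \delta^* T_{X\times X} \to N \to 0$$
together with the canonical splitting $\delta^* T_{X\times X} \cong T_X \oplus T_X$, under which $T_{\Delta_X} \cong T_X$ sits as the diagonal subbundle; hence $N \cong T_X$. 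This is precisely the fact, already recalled in the introduction, that $T_X$ is the normal bundle of $\Delta_X$.

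Next, I would invoke the self-intersection formula for regular embeddings, which computes the Gysin pullback of the class of a subvariety along its own embedding via the top Chern class of the normal bundle. Applied here this gives
$$\delta^*[\Delta_X] = \delta^*\delta_*[X] = c_n(N)\cap [X] = c_n(T_X).$$
Combining this with the projection formula then yields
$$\Delta_X^2 = [\Delta_X]\cdot \delta_*[X] = \delta_*\big(\delta^*[\Delta_X]\cdot [X]\big) = \delta_*\, c_n(T_X).$$
Since a proper pushforward preserves the degree of a zero-cycle, taking degrees gives $\deg \Delta_X^2 = \deg c_n(T_X) = \deg c_n(X)$, as claimed.

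The only substantial input is the self-intersection formula itself, which is the real engine of the argument; it is exactly the content of \cite[Proposition~8.1.12]{Ful}. Once it is invoked, everything else is the routine identification of the normal bundle of the diagonal with $T_X$ and a bookkeeping application of the projection formula. I therefore do not expect a genuine obstacle beyond correctly applying that formula and keeping track of the Gysin pullback and pushforward; the geometric heart of the matter is the classical identification $N_{\Delta_X/X\times X} \cong T_X$.
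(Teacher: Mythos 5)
Your proof is correct, and the paper itself gives no argument here at all: it simply cites Fulton, and your derivation (normal bundle of the diagonal is $T_X$, self-intersection formula, projection formula, degree preserved under proper pushforward) is exactly the standard proof behind that citation. One small attribution point: the "engine" you invoke, the self-intersection formula $\delta^*\delta_*\alpha = c_n(N)\cap\alpha$, is Fulton's Corollary~6.3, while the cited Proposition/Example~8.1.12 is the resulting statement about the diagonal itself, so your argument is genuinely non-circular but slightly misattributes which result is which.
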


\begin{remark}\label{rem:euler}
Let $X$ be a smooth complex projective variety. By the Gauss-Bonnet theorem (see for instance \cite[P. 70, Chapter~I, Theorem~4.10.1]{Hir}), the degree of the top Chern class of $X$ is nothing but the topological Euler characteristic: $$\deg c_n(X)=\sum_{i} (-1)^{i}b_i(X).$$
\end{remark}

%Euler characteristic: $$\deg c_n(X)=\sum_{i} (-1)^{i}b_{2\dim X-i}(X),$$ where $b_i(X):=h^i(X^{an}, \C)$ for the complex manifold $X^{an}$ associated to $X$.

\begin{example}\label{ex:curve} Let $C$ be a smooth projective curve with nef diagonal. By Proposition~\ref{prop:diag^2}, $\deg c_1(C)=\deg \Delta_C^2 \geq 0$. This implies that the only smooth projective curves with nef diagonal are a projective line and elliptic curves.
\end{example}

\begin{proposition}[{\cite[Proposition~4.4]{LO}}]\label{prop:LO} Let $X$ be a smooth projective variety of dimension $n$ admitting a finite morphism $f :X \to \P^n$  of degree $d$. Suppose that $\deg c_n(X)>(n+1)d$. Then $\Delta_X$ is not nef.
\end{proposition}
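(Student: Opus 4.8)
The plan is to produce an explicit effective $n$-cycle on $X\times X$ against which $\Delta_X$ has strictly negative intersection number; since a nef $n$-cycle meets every effective cycle of the complementary dimension $n$ nonnegatively, this will force $\Delta_X$ to fail to be nef. Write $H$ for the hyperplane class on $\P^n$ and set $h:=f^{\ast}H\in N^1(X)$, so that $\deg h^n=d$. Consider the product morphism $g:=f\times f:X\times X\to \P^n\times \P^n$, and recall the K\"unneth expression for the diagonal of projective space,
$$[\Delta_{\P^n}]=\sum_{k=0}^n p_1^{\ast}H^k\cdot p_2^{\ast}H^{n-k},$$
where here $p_1,p_2$ denote the two projections; the same letters will denote the projections of $X\times X$ below.

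First I would pull this class back along $g$, obtaining by the projection formula
$$g^{\ast}[\Delta_{\P^n}]=\sum_{k=0}^n p_1^{\ast}h^k\cdot p_2^{\ast}h^{n-k}.$$
The scheme-theoretic preimage $\Gamma:=g^{-1}(\Delta_{\P^n})=\{(x,y):f(x)=f(y)\}$ represents this class: indeed $f$ is flat by miracle flatness (its source is smooth, its target regular, and its fibres are finite), hence so is $g$, and flat pullback of $[\Delta_{\P^n}]$ agrees with the cycle $[\Gamma]$. The cycle $[\Gamma]$ is effective and contains $\Delta_X$. Since $f$ is generically \'etale (we are in characteristic zero), $g$ is \'etale near the generic point of $\Delta_X$, so $\Gamma$ is reduced there and $\Delta_X$ occurs in $[\Gamma]$ with multiplicity one. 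Hence $R:=[\Gamma]-\Delta_X$ is an \emph{effective} $n$-cycle on $X\times X$.

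Next I would compute the two relevant intersection numbers. Restricting to the diagonal identifies $\Delta_X\cong X$ with $p_1,p_2$ both becoming $\id_X$, so for each $k$ we get $\Delta_X\cdot(p_1^{\ast}h^k\cdot p_2^{\ast}h^{n-k})=\deg(h^k\cdot h^{n-k})=\deg h^n=d$; summing the $n+1$ terms gives $\Delta_X\cdot[\Gamma]=(n+1)d$. On the other hand Proposition~\ref{prop:diag^2} gives $\Delta_X\cdot\Delta_X=\deg c_n(X)$. Subtracting,
$$\Delta_X\cdot R=(n+1)d-\deg c_n(X).$$
Under the hypothesis $\deg c_n(X)>(n+1)d$ the right-hand side is strictly negative, while $R$ is an effective cycle of dimension $n$, the complementary dimension to the $n$-cycle $\Delta_X$ on the $2n$-fold $X\times X$. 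Thus $\Delta_X$ has negative intersection with an effective cycle and cannot be nef.

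I expect the one genuinely delicate point to be the claim that $\Delta_X$ appears in $[\Gamma]$ with multiplicity exactly one, equivalently that the residual cycle $R$ is effective: this is precisely what converts the numerical identity into the inequality needed for a contradiction. It rests on the generic separability of $f$ in characteristic zero together with the flatness of $g$, both standard but worth stating carefully. Everything else is a direct K\"unneth and projection-formula computation combined with Proposition~\ref{prop:diag^2}.
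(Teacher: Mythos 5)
Your proof is correct, and since the paper does not prove this proposition at all --- it simply quotes it as \cite[Proposition~4.4]{LO} --- the only comparison available is with that cited source, whose argument yours reproduces essentially verbatim: pull back $\Delta_{\P^n}$ under $f\times f$, use flatness and generic \'etaleness (characteristic zero) to see that $[\Gamma]-\Delta_X$ is effective, and play $\Delta_X\cdot[\Gamma]=(n+1)d$ off against $\Delta_X^2=\deg c_n(X)$ from Proposition~\ref{prop:diag^2}. One cosmetic remark: the identity $g^{\ast}[\Delta_{\P^n}]=\sum_{k} p_1^{\ast}h^k\cdot p_2^{\ast}h^{n-k}$ is just functoriality of the ring pullback (via $p_i\circ g=f\circ p_i$), not the projection formula.
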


\begin{proposition}\label{prop:bir} Let $X$ be a smooth projective variety with nef diagonal. Then any extremal contraction of $X$ is of fiber type.
\end{proposition}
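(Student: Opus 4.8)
The plan is to argue by contradiction. Assuming that some extremal contraction $\varphi\colon X\to Y$ is birational, I will manufacture an ample divisor on $X$ that is trivial on a contracted curve, which is absurd. The only geometric input is Proposition~\ref{prop:pseffcone}; at the level of divisors it asserts the inclusion of cones $\cEff^{1}(X)\subseteq\Nef^{1}(X)$, and everything else will be formal.

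First I would extract the purely formal consequence of this inclusion: passing to interiors gives $\operatorname{int}\cEff^{1}(X)\subseteq\operatorname{int}\Nef^{1}(X)$. Since the interior of the pseudoeffective cone is exactly the cone of big classes, and, by Kleiman's criterion, the interior of the nef cone is the ample cone $\Amp(X)$, this says that on such an $X$ every big divisor is automatically ample. Next I would take an ample divisor $A$ on $Y$ and set $H:=\varphi^{*}A$. As the pullback of an ample class, $H$ is nef; and because $\varphi$ is birational the projection formula gives $\deg H^{n}=\deg A^{n}>0$ with $n=\dim X$, so the nef divisor $H$ has positive top self-intersection and is therefore big. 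By the previous step $H$ is then ample. On the other hand a nontrivial extremal contraction contracts every curve $C$ in its ray, and for such a curve $H\cdot C=A\cdot\varphi_{*}C=0$, which contradicts the ampleness of $H$. Hence $\varphi$ cannot be birational, i.e. every extremal contraction of $X$ is of fiber type.

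The step where I expect the real subtlety to hide is the choice of strategy itself, so let me explain why I route through ``big $\Rightarrow$ ample'' rather than the more naive approach. The naive plan would be to produce an effective divisor $D$ with $D\cdot C<0$, directly contradicting that effective divisors are nef. In the divisorial case this is immediate, with $D$ the exceptional divisor $E$ (for which $E\cdot C<0$ on a fiber). But in the small case there is no exceptional divisor, $Y$ fails to be $\Q$-factorial, and such a $D$ is genuinely hard to exhibit: proper transforms of divisors from $Y$ pair to zero with $C$, and controlling the sign of the intersection for a non-$\Q$-Cartier divisor is delicate. The argument above sidesteps this entirely and treats divisorial and small contractions uniformly, at the cost of invoking three classical facts that I would cite rather than prove: the identification $\operatorname{int}\cEff^{1}(X)=\{\text{big classes}\}$, Kleiman's $\operatorname{int}\Nef^{1}(X)=\Amp(X)$, and the implication that a nef divisor with $H^{n}>0$ is big. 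Once these are granted, the only genuinely geometric content is Proposition~\ref{prop:pseffcone}, and the proof of the present statement is formal.
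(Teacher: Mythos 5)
Your proof is correct, but it routes through a different mechanism than the paper's. Both arguments are by contradiction and hinge on the same object: the pullback $H$ to $X$ of an ample divisor on $Y$, which is nef, is big because the contraction is birational, and yet pairs to zero with any contracted curve $C$. The paper produces the contradiction by applying Proposition~\ref{prop:pseffcone} to the \emph{curve}: $C$ is effective, hence a nef $1$-cycle; Kodaira's lemma then writes $H=A+E$ in $N^{1}(X)$ with $A$ ample and $E$ effective, and the nefness of $C$ gives $H\cdot C\geq A\cdot C>0$, against $H\cdot C=0$. You instead apply the proposition to \emph{divisor} classes, obtaining $\cEff^{1}(X)\subseteq\Nef^{1}(X)$, pass to interiors to conclude that big classes are ample, and then contradict Kleiman ampleness outright; the contracted curve enters your argument only in the final pairing. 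The two applications are dual: the paper needs only the $1$-cycle instance of Proposition~\ref{prop:pseffcone}, you need only its codimension-one instance. The external inputs are of comparable weight --- your three cited facts ($\operatorname{int}\cEff^{1}(X)$ equals the big cone, $\operatorname{int}\Nef^{1}(X)=\Amp(X)$, and a nef divisor with positive top self-intersection is big) versus the paper's Kodaira lemma, which is essentially the first of these in disguise --- so neither route is more elementary; yours packages all positivity into a single cone inclusion before any geometry happens, while the paper's is shorter on the page. One small correction to your closing remarks: the paper's proof, like yours, treats divisorial and small contractions uniformly, so the ``naive'' exceptional-divisor argument you contrast yourself with is not the argument the paper actually uses.
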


\begin{proof} Let $f: X \to Y$ be an extremal contraction of birational type. By Proposition~\ref{prop:pseffcone}, a projective curve $C \subset X$ contracted by $f$ is a nef $1$-cycle. For an ample divisor $H$ on $Y$, $f^{\ast}H$ is nef and big. Thus there exist an ample $\Q$-divisor $A$ and an effective $\Q$-divisor $E$ such that $f^{\ast}H=A+E \in N^1(X)$. Then we have $(A+E).C\geq A.C >0$, but this contradicts to $f^{\ast}H.C=0$. %Applying \cite[Proposition 6.4]{FL}, the class $[C]$ is zero in $N_1(X)$. However this is a contradiction.

\if0 Suppose that $X$ admits a non-isomorphic birational morphism $f: X \to Y$. By Proposition~\ref{prop:pseffcone}, a projective curve $C \subset X$ contracted by $f$ is a nef $1$-cycle. Then \cite[Theorem~0.2]{BDPP} yields $[C] \in \cMov_1(X)$, where $\cMov_1(X)$ is the movable cone of curves. 
For an ample divisor $H$ on $Y$, $f^{\ast}H$ is nef and big. Thus there exist an ample $\Q$-divisor $A$ and an effective $\Q$-divisor $E$ such that $f^{\ast}H=A+E$. Then we have $(A+E).C\geq A.C >0$, but this contradicts to $f^{\ast}H.C=0$. %Applying \cite[Proposition 6.4]{FL}, the class $[C]$ is zero in $N_1(X)$. However this is a contradiction.
\fi
\end{proof}

\begin{proposition}[{\cite[Corollary~4.2.(1)]{LO}}]\label{prop:nefbig:Pic} Let $X$ be a smooth projective variety with nef and big diagonal. Then $N^1(X) \cong \R$.
\end{proposition}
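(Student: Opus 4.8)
The plan is to combine the divisorial consequence of nef diagonal with the extremal positivity encoded by bigness. By Proposition~\ref{prop:pseffcone}, nefness of $\Delta_X$ forces $\overline{\Eff}^{1}(X)=\Nef^{1}(X)$; call this common cone $P$. It is a full-dimensional (it contains the ample cone) pointed closed convex cone in $N^1(X)$. I would show that bigness of $\Delta_X$ forces every nonzero class in $P$ to be big, that is, to lie in the interior of $\overline{\Eff}^{1}(X)$; a pointed full-dimensional cone all of whose nonzero points are interior must be a half-line, which gives $N^1(X)\cong\R$.

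For the key step I would argue by contradiction, assuming there is a nonzero nef divisor $D$ with $D^n=0$ (where $n=\dim X$); such a $D$ exists on the boundary of $P$ whenever $\dim N^1(X)\ge 2$, since a nef divisor is big exactly when its top self-intersection is positive. Let $p_1,p_2\colon X\times X\to X$ be the projections and let $\nu$ be the largest integer with $D^{\nu}\neq 0$, so that $1\le \nu\le n-1$ and $D^{\nu+1}=0$. Fix an ample divisor $A$ on $X$ and consider the $n$-cycle
$$\gamma:=p_1^{\ast}D\cdot p_2^{\ast}\big(D^{\nu}\cdot A^{\,n-1-\nu}\big)$$
on $X\times X$. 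Since pullbacks of nef divisors are nef and products of nef divisors are nef cycles, $\gamma$ is nef; it is nonzero because each factor is (here $D^{\nu}\cdot A^{\,n-\nu}>0$, and an external product of nonzero classes is nonzero). Restricting $p_j^{\ast}(\,\cdot\,)$ along the diagonal is the identity, so the projection formula gives
$$\deg(\Delta_X\cdot\gamma)=\deg\big(D\cdot D^{\nu}\cdot A^{\,n-1-\nu}\big)=D^{\nu+1}\cdot A^{\,n-1-\nu}=0.$$
A class is big precisely when it has strictly positive intersection with every nonzero nef class of complementary dimension; since $\gamma$ is a nonzero nef $n$-cycle, this contradicts the bigness of $\Delta_X$. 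Hence no such $D$ exists, and every nonzero nef divisor is big.

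The remaining convex-geometry step is routine: $P=\overline{\Eff}^{1}(X)=\Nef^{1}(X)$ is pointed and full-dimensional, so if $\dim N^1(X)\ge 2$ a compact base of $P$ has positive dimension and therefore nonempty relative boundary, producing nonzero non-interior (hence non-big) points of $P$ and contradicting the previous paragraph. Thus $\dim N^1(X)=1$, i.e. $N^1(X)\cong\R$.

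I expect the main obstacle to be the positivity-of-cycles input used in the key step: one must know that pullbacks and products of nef divisors yield genuinely nef cycles on $X\times X$, that a nonzero nef class pairs strictly positively against the complementary power of an ample divisor (so that the factors of $\gamma$, and $\gamma$ itself, are nonzero), and that bigness of an $n$-cycle is equivalent to strict positivity against all nonzero nef $n$-cycles. These facts rest on the Fulger--Lehmann theory of positivity of higher-codimension cycles; once they are in place, the rest is the elementary intersection computation and the convexity argument above.
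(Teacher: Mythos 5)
Your proposal is correct, but note that the paper itself gives no proof of this statement: it is imported verbatim from \cite[Corollary~4.2.(1)]{LO}, so the only meaningful comparison is with Lehmann--Ottem's original argument, which your proof essentially reconstructs. The skeleton is the expected one: Proposition~\ref{prop:pseffcone} gives $\Nef^1(X)=\overline{\Eff}^1(X)$, so if $\dim N^1(X)\geq 2$ this pointed full-dimensional cone has a nonzero boundary class $D$, which is then nef but not big, hence $D^n=0$; you then exhibit a nonzero nef $n$-cycle $\gamma$ on $X\times X$ with $\deg(\Delta_X\cdot\gamma)=0$, contradicting bigness by cone duality (only the easy direction of your ``precisely when'' is needed: an interior point of $\overline{\Eff}_n(X\times X)$ pairs strictly positively with every nonzero element of the dual cone $\Nef_n(X\times X)$). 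The one step deserving more care is the claim that $D^{\nu}\not\equiv 0$ forces $D^{\nu}\cdot A^{n-\nu}>0$; this is true but not formal. It follows from the residual complete-intersection trick: for a $\nu$-dimensional subvariety $Z$ and $m\gg 0$, general members $H_1,\ldots,H_{n-\nu}\in|mA|$ containing $Z$ give $m^{n-\nu}A^{n-\nu}\equiv aZ+R$ with $a\geq 1$ and $R$ effective, so nefness of the class $D^{\nu}$ (a product of nef divisors) yields $D^{\nu}\cdot Z=0$ whenever $D^{\nu}\cdot A^{n-\nu}=0$; alternatively it follows from the Fulger--Lehmann fact that complete-intersection classes are big. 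You could also sidestep most of this by defining $\nu$ as the largest integer with $D^{\nu}\cdot A^{n-\nu}>0$ (so $D^{\nu+1}\cdot A^{n-\nu-1}=0$ by maximality), which reduces the needed input to the classical divisor-level statement that a nonzero nef divisor satisfies $D\cdot A^{n-1}>0$. With that point secured, the remaining steps --- nefness of products of pullbacks of nef divisors, the projection-formula computation $\deg(\Delta_X\cdot\gamma)=\deg(D^{\nu+1}\cdot A^{n-1-\nu})=0$, and the convex-geometry conclusion --- are all sound.
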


\begin{definition} A {\it fake projective space} is a smooth projective variety with the same Betti numbers as $\P^n$ but not isomorphic to $\P^n$.
\end{definition}

\begin{proposition}[{\cite[Section~1]{LO}}]\label{prop:fps} A projective space and any fake projective space have nef and big diagonal. 
\end{proposition}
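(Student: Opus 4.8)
The plan is to treat $\P^n$ and fake projective spaces uniformly, using only that such an $X$ satisfies $b_{2i}(X)=1$ for $0\le i\le n$ and $b_{2i+1}(X)=0$. Fix an ample divisor $H$ on $X$, put $d:=H^n>0$, and write $H_1:=\pi_1^\ast H$ and $H_2:=\pi_2^\ast H$ on $X\times X$, where $\pi_1,\pi_2$ denote the projections. The crucial (and, I expect, the only non-formal) step is to determine the numerical structure of $X\times X$: I claim that for each $m$ the classes $\{H_1^iH_2^{j}\}_{i+j=m,\ 0\le i,j\le n}$ form a basis of $N^m(X\times X)$. On the one hand, since numerical equivalence is coarser than homological equivalence, $\dim_\R N^m(Y)\le b_{2m}(Y)$ for any smooth projective $Y$; applied to $Y=X\times X$, the K\"unneth formula and the vanishing of the odd Betti numbers give $\dim_\R N^m(X\times X)\le b_{2m}(X\times X)=\#\{(i,j):i+j=m,\ 0\le i,j\le n\}$. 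On the other hand, as $b_{2i}(X)=1$ and $[H]^i\ne0$ in $H^{2i}(X,\R)$ for $0\le i\le n$, the powers $[H]^i$ generate $H^{2i}(X,\R)$, so the exterior products $[H]^i\otimes[H]^j$ are linearly independent in $H^{2m}(X\times X,\R)$; as these are the images of $H_1^iH_2^{j}$ under the cycle class map, the latter are independent in $N^m(X\times X)$. Counting gives the claim.

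In particular $v_i:=H_1^{i}H_2^{n-i}$ $(i=0,\dots,n)$ is a basis of $N^n(X\times X)$, with $v_i\cdot v_j=\deg(H_1^{i+j}H_2^{2n-i-j})=d^2\,\delta_{i+j,\,n}$ because $H^{n+1}=0$ on $X$. I would then compute the diagonal class in this basis. Writing $[\Delta_X]=\sum_i a_iv_i$ and pairing with $v_{n-i}$, I use that $H_1$ and $H_2$ both restrict to $H$ on $\Delta_X\cong X$, so $[\Delta_X]\cdot v_{n-i}=\deg_X H^n=d$; comparing this with $v_i\cdot v_{n-i}=d^2$ yields $a_i=1/d$, that is
\[
[\Delta_X]=\frac1d\sum_{i=0}^{n}H_1^{i}H_2^{n-i}.
\]

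It remains to read off both positivity statements from this formula. Each $v_i$ is a product of $n$ nef divisors, hence a nef cycle: for any $n$-dimensional subvariety $W\subseteq X\times X$, the number $v_i\cdot W$ is the intersection number on $W$ of $i$ copies of $H_1|_W$ and $n-i$ copies of $H_2|_W$, a product of nef divisors on the $n$-dimensional $W$, hence non-negative. Thus $[\Delta_X]$, a non-negative combination of the $v_i$, is nef. For bigness, each $v_i$ is also pseudoeffective, since for $m\gg0$ the class $m^n v_i=\pi_1^\ast(mH)^i\cdot\pi_2^\ast(mH)^{n-i}$ is represented by a product of effective intersections of general very ample divisors on the two factors; hence the full-dimensional simplicial cone $\sigma:=\langle v_0,\dots,v_n\rangle_{\ge0}$ lies in $\cEff^{n}(X\times X)$. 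Since $[\Delta_X]=\frac1d\sum_iv_i$ is a strictly positive combination of the generators of $\sigma$, it lies in $\operatorname{int}\sigma\subseteq\operatorname{int}\cEff^{n}(X\times X)$, so $\Delta_X$ is big. The only place where the hypothesis on $X$ is genuinely used is the structural computation of $N^\ast(X\times X)$; the rest is formal, which also explains why the same argument covers both $\P^n$ and every fake projective space.
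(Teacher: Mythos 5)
The paper offers no argument of its own for this statement---it simply cites \cite[Section~1]{LO}---so your self-contained proof is necessarily a different route. What you give is, in effect, the standard K\"unneth decomposition of the diagonal of a ``cohomological projective space,'' carried out directly in the numerical groups $N^{\ast}(X\times X)$: the classes $v_i=H_1^iH_2^{n-i}$, the coefficient computation $a_i=1/d$, nefness of $\Delta_X$ as a non-negative combination of products of nef divisors, and bigness via a full-dimensional simplicial subcone of $\cEff^{n}(X\times X)$. This architecture is sound, treats $\P^n$ and fake projective spaces uniformly, and makes explicit what the citation leaves implicit.

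One step, however, is stated backwards and needs repair. You deduce independence of the classes $H_1^iH_2^j$ in $N^m(X\times X)$ from independence of their images in $H^{2m}(X\times X,\R)$. That inference is invalid as a general principle: homological equivalence implies numerical equivalence, so $N^m(X\times X)$ is a further \emph{quotient} of the image of the cycle class map, and independence upstairs does not pass to a quotient (the missing implication, ``numerically trivial implies homologically trivial,'' is precisely the open standard conjecture $\mathrm{hom}=\mathrm{num}$). Fortunately the fix is contained in your own next sentence: the pairing matrix $v_i\cdot v_j=d^2\,\delta_{i+j,\,n}$ is anti-diagonal with nonzero entries, hence nonsingular, and this proves directly that $v_0,\dots,v_n$ are independent in $N^n(X\times X)$; combined with your (correct) bound $\dim_\R N^n(X\times X)\le b_{2n}(X\times X)=n+1$, this yields the basis claim for $m=n$, which is the only case your argument uses. (For general $m$ the same trick works by pairing $H_1^iH_2^j$ against $H_1^{n-i}H_2^{n-j}$.) With the independence rerouted through the intersection pairing rather than through cohomology, the proof is correct.
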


\if0
\begin{lemma}[cf. {\cite[Lemma~4.3]{LO}}]\label{lem:fib} If a smooth projective variety $X$ admits a surjective morphism $f: X \to Y$ to a smooth projective variety $Y$ with negative top Chern class, then $\Delta_X$ is not nef.
\end{lemma}

\begin{proof} Let $n$ and $m$ be the dimension of $X$ and $Y$ respectively. For an ample divisor $H$ on $X \times X$, there exists a positive integer $a$ such that $$(f \times f)_{\ast}(\Delta_X\cdot H^{n-m})=a\Delta_Y$$ in $A_m(Y\times Y)$. By the projection formula, we obtain $$\deg \bigl\{\Delta_X\cdot H^{n-m}\cdot (f \times f)^{\ast}\Delta_Y\bigr\}= a \deg \Delta_Y^2=a\deg c_m(Y) <0.$$
\end{proof}
\fi

\if0
\begin{proposition}\label{prop:top=0} Let $X$ be an $n$-dimensional smooth projective variety with $\deg c_n(X)=0$. Assume that $b_i(X)=1$ for any even integer $i$. Then $\Delta_X$ is not nef.
\end{proposition}

\begin{proof}
According to a result of Paranjape \cite{Para} and Laterveer \cite{Late} (see also \cite[Theorem~3.18]{voi}), there exists a decomposition 
\begin{eqnarray}\label{eq:decomp}
m\Delta_X=Z_0+ \ldots + Z_{n} \in A^{n}(X \times X),
\end{eqnarray} 
where $m\neq 0$ is an integer and $Z_i$ is supported in $V_i \times W_i$ with $\dim V_i=i$ and $\dim W_i=n-i$.
By the K\"unneth decomposition, we have 
$$
H^{n}(X \times X, \Q)= \bigoplus_{i+j=n}H^i(X, \Q)\otimes H^j(X, \Q).
$$  
Let us take a class $h \in H^2(X, \Q)$ satisfying $h^{n}=1$. By the decomposition (\ref{eq:decomp}), we have a cohomological decomposition of the diagonal:
$$
\Delta_X=\sum_{i=0}^{n} a_ih^{n-i} \otimes h^i \in H^{n}(X \times X, \Q),
$$where $a_i \in \Q$. Assume that the diagonal $\Delta_X$ is nef. Then we have 
$$
a_i= \deg (\Delta_X\cdot h^i\otimes h^{n-i}) \geq 0. 
$$
Moreover it follows from Proposition~\ref{prop:diag^2} that 
$$\deg \Delta_X^2=\deg c_{n}(X)=0.$$
Since $\deg \Delta_X^2=a_0a_{n}+a_1a_{n-1}+ \cdots +a_na_{0}$, we conclude that $a_i=0$ for any $i$. This implies that $\Delta_X$ is numerically trivial, but this is a contradiction. 
\end{proof}

\fi

\if0

\begin{lemma}\label{lem:base} Let $f: X \to Y$ be a flat morphism between smooth projective varieties. If $\Delta_X$ is nef, then so is $\Delta_Y$.
\end{lemma}

\begin{proof} Let $n$ and $m$ be the dimension of $X$ and $Y$ respectively. For an ample divisor $H$ on $X \times X$, there exists a positive integer $a$ such that $$(f \times f)_{\ast}(\Delta_X\cdot H^{n-m})=a\Delta_Y.$$ 
Since nefness is preserved by flat pushforward to a smooth base, our assertion holds.\footnote{Claim: $\Delta_X\cdot H^{n-m}$is nef. 

In fact, for any $Z \in \Eff_{2n-m}(X \times X)$, we have $Z\cdot H^{n-m} \in \Eff_{n}(X \times X)$. Since $\Delta_X$ is nef, $\Delta_X\cdot H^{n-m}\cdot Z \geq 0$.}
\end{proof}

\fi

\if0

\begin{lemma}\label{lem:fiber} Let $f: X \to Y$ be a flat morphism between smooth projective varieties and $F$ be a general fiber of $f$. Assume that $A_0(Y)\cong \Z$ (For instance, this holds if $X$ is rationally connected). If $\Delta_X$ is nef, then so is $\Delta_F$.
\end{lemma}

\begin{proof} Let $n$ and $m$ be the dimension of $X$ and $Y$ respectively. Let $H$ be an ample divisor on $Y \times Y$. Denote by $a$ the self-intersection number $H^{2m}$. For a flat morphism $f \times f: X \times X \to Y \times Y$, we obtain $$a[F \times F]=(f \times f)^{\ast}[H^{2m}] \in A_{2n-2m}(X\times X). $$Since $\Delta_F=\Delta_X \cap F \times F$, this tells us that $\Delta_F$ is nef.
\end{proof}

\begin{proposition}\label{prop:cone} Let $X$ be a smooth Fano variety with nef diagonal. Then the Kleiman-Mori cone $\overline{NE}(X)$ is simplicial.
\end{proposition}
\fi

\section{Complete intersections}\label{sect:ci}

%\begin{lemma}\label{lem:generator} For a smooth projective complete intersection $X$, there exists a system of generators $f_1, \ldots, f_r$ of the homogeneous ideal of $X$ such that the zero sets of $\{f_1, \ldots, f_i\}$ $(1 \leq i \leq r)$ are smooth complete intersections.
%\end{lemma}

%\begin{proof}
%\end{proof}

In this section, we will study the case of complete intersections. The following two results are well known:

\begin{proposition}[{\cite[Theorem~3]{Az}}]\label{prop:euler} Let $X$ be an $n$-dimensional smooth projective complete intersection of type $(d_1, d_2, \ldots, d_r)$. Then the degree of the top Chern class $c_n(X)$ is given by
$$\deg c_n(X)=d_1d_2\cdots d_r\Biggl\{\sum_{i=0}^n  (-1)^{n-i} \binom{n+r+1}{i} h_{n-i}(d_1,\ldots,d_r)\Biggr\},
$$
where $\displaystyle{h_k(d_1,\ldots,d_r):=\sum_{\substack{i_1+i_2+ \ldots + i_r=k\\ i_j \geq 0}} d_1^{i_1}d_2^{i_2}\cdots d_r^{i_r}}$.
\end{proposition}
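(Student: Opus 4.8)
The plan is to compute the total Chern class of $T_X$ directly from the two fundamental exact sequences attached to a smooth complete intersection and then read off its degree-$n$ part. Write $N:=n+r$, so that $X\subset \P^{N}$ is cut out by $r$ hypersurfaces of degrees $d_1,\ldots,d_r$, and let $h\in A^1(X)$ be the restriction of the hyperplane class. First I would invoke the Euler sequence $\shse{\cO_{\P^N}}{\cO_{\P^N}(1)^{\osum(N+1)}}{T_{\P^N}}$, which by the Whitney sum formula gives $c(T_{\P^N})=(1+h)^{N+1}$, and restrict it to $X$. Since $X$ is smooth, the normal bundle sequence $\shse{T_X}{T_{\P^N}|_X}{\cN_{X/\P^N}}$ is a short exact sequence of vector bundles with $\cN_{X/\P^N}\iso\bigoplus_{j=1}^r\cO_X(d_j)$, so $c(\cN_{X/\P^N})=\prod_{j=1}^r(1+d_jh)$.

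Multiplicativity of Chern classes in these two sequences then yields
$$
c(T_X)=\frac{(1+h)^{N+1}}{\prod_{j=1}^r(1+d_jh)}\in A^{\ast}(X),
$$
and the key observation is that every term is a polynomial in the single class $h$, so that $c_n(X)$ is just a multiple of $h^n$. The next step is to expand the right-hand side as a formal power series in $h$: the numerator contributes $\sum_i\binom{n+r+1}{i}h^i$ (using $N+1=n+r+1$), while each factor $(1+d_jh)^{-1}=\sum_{m\geq 0}(-d_jh)^m$ multiplies out to $\prod_{j=1}^r(1+d_jh)^{-1}=\sum_{k\geq 0}(-1)^k h_k(d_1,\ldots,d_r)\,h^k$, where the coefficient is precisely the complete homogeneous symmetric polynomial $h_k$ by definition. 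Extracting the coefficient of $h^n$ from the product and reindexing $k=n-i$ gives
$$
c_n(X)=\left\{\sum_{i=0}^n(-1)^{n-i}\binom{n+r+1}{i}h_{n-i}(d_1,\ldots,d_r)\right\}h^n.
$$

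Finally, to pass from the cycle class $c_n(X)\in A_0(X)$ to its degree, I would use B\'ezout's theorem, which gives $\deg(h^n\cap[X])=d_1d_2\cdots d_r$; multiplying the bracketed coefficient by this factor produces the claimed formula. I do not expect a genuine obstacle, as the argument is a formal Chern-class computation; the only points requiring care are the bookkeeping of the power-series expansion and the sign $(-1)^k$ coming from each geometric series, together with the verification that the Whitney formula applies, which is legitimate because $X$ is a smooth complete intersection and both displayed sequences are honest short exact sequences of vector bundles. Matching the summation index and recalling $\int_X h^n=\prod_j d_j$ then yields the statement verbatim.
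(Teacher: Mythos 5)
Your proof is correct. There is nothing in the paper to compare it against line by line: the paper does not prove this proposition, but simply quotes it from \cite{Az}. Your argument --- restricting the Euler sequence, using the normal bundle sequence with $\cN_{X/\P^{N}}\iso\bigoplus_j\cO_X(d_j)$, inverting $\prod_j(1+d_jh)$ as a (terminating, since $h$ is nilpotent in $A^{\ast}(X)$) geometric series whose $h^k$-coefficient is $(-1)^k h_k(d_1,\ldots,d_r)$, extracting the coefficient of $h^n$, and evaluating $\deg h^n=d_1\cdots d_r$ by B\'ezout --- is the standard derivation of this classical formula, and it is exactly the technique the paper itself carries out in the analogous computation for weighted hypersurfaces (Proposition~\ref{prop:weighted}, via the generalized Euler sequence and the conormal sequence). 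So your proposal is a complete and correct substitute for the external citation, with no gaps; the two points that needed care, the validity of the Whitney formula for both short exact sequences and the sign bookkeeping in the series expansion, are both handled properly.
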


\begin{definition}\label{def:eq} For a set of non-negative integers $(d_1, \ldots, d_r, n)$, we put
$$\chi(d_1, d_2, \ldots, d_r; n)=d_1d_2\cdots d_r\Biggl\{\sum_{i=0}^n  (-1)^{n-i} \binom{n+r+1}{i} h_{n-i}(d_1,\ldots,d_r)\Biggr\}
$$ 
\end{definition}

\begin{proposition}[{\cite[P. 146]{Az}}]\label{cor:formula} \rm Under the notation as in Definition~\ref{def:eq}, we have a formula
$$
\chi(d_1, d_2, \ldots, d_r; n)=d_1\chi(d_2, \ldots, d_r; n)-(d_1-1)\chi(d_1, d_2, \ldots, d_r; n-1)
$$
\end{proposition}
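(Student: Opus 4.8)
The plan is to prove the recursion by a direct algebraic manipulation, since $\chi(d_1, d_2, \ldots, d_r; n)$ is given by the explicit closed formula of Definition~\ref{def:eq}. Two elementary ingredients will drive the computation. First, the complete homogeneous symmetric polynomials obey the deletion recursion
$$
h_k(d_1,\ldots,d_r)=h_k(d_2,\ldots,d_r)+d_1\,h_{k-1}(d_1,\ldots,d_r),
$$
obtained by separating the monomials with $i_1=0$ from those with $i_1\ge 1$; equivalently $h_k(d_2,\ldots,d_r)=h_k(d_1,\ldots,d_r)-d_1\,h_{k-1}(d_1,\ldots,d_r)$. Second, Pascal's rule $\binom{n+r+1}{i}=\binom{n+r}{i}+\binom{n+r}{i-1}$ relates the binomial coefficient in $\chi(d_1,\ldots,d_r;n)$ to the coefficient $\binom{n+r}{i}$ that appears both in $\chi(d_2,\ldots,d_r;n)$ (which has $r-1$ variables, so its top entry is $n+(r-1)+1=n+r$) and in $\chi(d_1,\ldots,d_r;n-1)$ (degree $n-1$, top entry $(n-1)+r+1=n+r$).

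First I would factor the common product $d_1d_2\cdots d_r$ out of all three terms: the leading factor of $\chi(d_2,\ldots,d_r;n)$ is $d_2\cdots d_r$, so multiplying by $d_1$ restores $d_1\cdots d_r$, matching the leading factors of $\chi(d_1,\ldots,d_r;n)$ and of $(d_1-1)\chi(d_1,\ldots,d_r;n-1)$. After cancelling this factor, the claim reduces to an identity among the bracketed sums expressed purely in terms of $h_k(d_1,\ldots,d_r)$, $h_k(d_2,\ldots,d_r)$, and binomial coefficients all having the same top entry $n+r$ on the right-hand side.

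Next I would substitute the deletion recursion into the $\chi(d_2,\ldots,d_r;n)$ sum to re-express $h_{n-i}(d_2,\ldots,d_r)$ through $h_{n-i}$ and $h_{n-i-1}$ of all $r$ variables. Combining with the $(d_1-1)$-term, the two sums carrying a factor $d_1$ conveniently collapse: the coefficients $-d_1$ and $d_1-1$ sum to $-1$, leaving a single sum in the $h_k(d_1,\ldots,d_r)$ with coefficient $-1$. What remains is a difference of two sums in $h_k(d_1,\ldots,d_r)$ alone, both with binomial coefficients $\binom{n+r}{i}$; after shifting the index of one of them by one and tracking the resulting sign, Pascal's rule reassembles $\binom{n+r}{i}+\binom{n+r}{i-1}=\binom{n+r+1}{i}$ and reproduces exactly the defining sum of $\chi(d_1,\ldots,d_r;n)$.

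The computation is elementary, so the only real obstacle is the bookkeeping: keeping the summation ranges and the sign convention $(-1)^{n-i}$ straight through the index shift, and checking that the boundary contributions (the $i=0$ term, and the vanishing terms with $h_{-1}=0$) behave so that Pascal's rule applies uniformly across the whole range rather than only on the interior.
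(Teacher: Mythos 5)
Your proof is correct. Note that the paper does not actually prove this proposition at all---it is quoted from \cite[P.~146]{Az}---so your argument is a self-contained replacement rather than a variant of an in-paper proof, and it checks out in detail. Dividing the asserted identity by the common factor $d_1d_2\cdots d_r$ (using $d_1\cdot(d_2\cdots d_r)=d_1\cdots d_r$) reduces it to
$$
S(d_1,\ldots,d_r;n)=S(d_2,\ldots,d_r;n)-(d_1-1)\,S(d_1,\ldots,d_r;n-1),
$$
where $S$ denotes the bracketed alternating sum in Definition~\ref{def:eq}. Substituting your deletion recursion in the form $h_k(d_2,\ldots,d_r)=h_k(d_1,\ldots,d_r)-d_1h_{k-1}(d_1,\ldots,d_r)$, and using $(-1)^{n-1-i}=-(-1)^{n-i}$ in the third term, the coefficient of $(-1)^{n-i}\binom{n+r}{i}h_{n-1-i}(d_1,\ldots,d_r)$ is indeed $-d_1+(d_1-1)=-1$; re-indexing that leftover sum by $j=i+1$ flips its sign once more, and Pascal's rule $\binom{n+r}{j}+\binom{n+r}{j-1}=\binom{n+r+1}{j}$ reassembles exactly the defining sum of $S(d_1,\ldots,d_r;n)$. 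The boundary behavior you flag is harmless, as you anticipated: the $i=n$ term created by the substitution vanishes because $h_{-1}=0$, and at $j=0$ Pascal's rule degenerates to $\binom{n+r}{0}=\binom{n+r+1}{0}$, so the identity holds uniformly over the whole range. As for what each route buys: the paper's citation defers to Azevedo, where the recursion is obtained in the course of the geometric computation of Euler characteristics of complete intersections, whereas your computation exhibits the recursion as a purely formal polynomial identity in $d_1,\ldots,d_r$ following from Definition~\ref{def:eq} alone---which is the right level of generality here, since the paper applies the recursion inductively (e.g.\ in Propositions~\ref{prop:ci} and~\ref{prop:quad}) as a formal tool, with no geometric input needed.
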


\begin{proposition}\label{prop:hyp} Let $X$ be an $n$-dimensional smooth projective hypersurface of degree $d \geq 3$. If $n$ is even, then the degree of the top Chern class $\deg c_n(X)$ is positive. If $n$ is odd, then $\deg c_n(X)$ is negative except the case where $(n, d)=(1,3)$.
\end{proposition}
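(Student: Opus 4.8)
The plan is to reduce the statement to a sign analysis of the one-variable Euler characteristic $\chi(d;n)$ of Definition~\ref{def:eq}. By Proposition~\ref{prop:euler} we have $\deg c_n(X)=\chi(d;n)$, so it suffices to determine the sign of $\chi(d;n)$ for $d\geq 3$. First I would specialize the recursion of Proposition~\ref{cor:formula} to $r=1$, which gives
$$\chi(d;n)=d\,\chi(;n)-(d-1)\,\chi(d;n-1).$$
Here $\chi(;n)$ is the $r=0$ value, equal to $\deg c_n(\P^n)=n+1$ (directly from Definition~\ref{def:eq}, since $h_k$ of the empty tuple is $1$ for $k=0$ and $0$ otherwise), and $\chi(d;0)=d$ since a zero-dimensional smooth hypersurface of degree $d$ consists of $d$ points. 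Thus $a_n:=\chi(d;n)$ satisfies the first-order linear recurrence $a_n=d(n+1)-(d-1)a_{n-1}$ with $a_0=d$.

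Next I would solve this recurrence explicitly. Adding a linear particular solution $\alpha n+\beta$ to the homogeneous solution $C(1-d)^n$ and matching $a_0=d$ yields
$$\chi(d;n)=n+2-\tfrac{1}{d}+\Bigl(d-2+\tfrac{1}{d}\Bigr)(1-d)^n,$$
which can also be verified a posteriori by checking that it satisfies the recurrence and the initial condition. Writing $c:=d-2+\tfrac1d$, note that $c>1$ for every $d\geq 3$.

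The sign of $\chi(d;n)$ is then governed by the parity of $n$. If $n$ is even then $(1-d)^n=(d-1)^n>0$, so $\chi(d;n)=(n+2-\tfrac1d)+c(d-1)^n$ is a sum of positive terms, hence positive. If $n$ is odd then $(1-d)^n=-(d-1)^n$, and $\chi(d;n)<0$ is equivalent to $c(d-1)^n>n+2-\tfrac1d$. For $n=1$ this simplifies to $d(d-3)>0$, which holds for $d\geq 4$ and fails with equality $\chi(3;1)=0$ exactly when $d=3$; this is the excluded case $(n,d)=(1,3)$. For odd $n\geq 3$ the inequality follows from the crude estimates $c(d-1)^n>2^n\geq n+2>n+2-\tfrac1d$, using $d-1\geq 2$ and $c>1$.

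The one delicate point is the odd case, where the positive linear term $n+2-\tfrac1d$ competes with the negative exponential term $-c(d-1)^n$; the assertion rests on the exponential term dominating, and the threshold of this domination is exactly the single exception $(n,d)=(1,3)$, so the estimate must be sharp there while staying strict for all other odd $n$. By contrast, the specialization of the recursion, the closed form, and the even case are all routine.
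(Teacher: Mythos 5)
Your proof is correct, and it arrives at exactly the same closed formula as the paper's proof, but by a different derivation. The paper obtains $d\,\deg c_n(X)=(1-d)^{n+2}-1+d(n+2)$ in a single step from Proposition~\ref{prop:euler} (summing the binomial expression), and then asserts that the sign claim "follows from this equality"; your expression $n+2-\tfrac{1}{d}+\bigl(d-2+\tfrac{1}{d}\bigr)(1-d)^n$ is precisely this identity divided by $d$, since $\bigl(d-2+\tfrac1d\bigr)(1-d)^n=\tfrac{1}{d}(1-d)^{n+2}$. What you do differently is to reach the formula by specializing the recursion of Proposition~\ref{cor:formula} to $r=1$ and solving the resulting first-order linear recurrence. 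One caution there: the paper only ever invokes that recursion for $r\geq 2$, and at $r=1$ its right-hand side involves the $r=0$ value $\chi(\,;n)$, so you are implicitly extending Definition~\ref{def:eq} to the empty tuple. Your convention $\chi(\,;n)=n+1$ is the natural one, and the $r=1$ instance of the recursion is indeed true with it (it is equivalent to the paper's displayed identity, so it can be checked directly from Proposition~\ref{prop:euler}), but strictly speaking this instance is not literally covered by the paper's statement and deserves that one-line verification; note also that your "a posteriori" check only confirms that your closed form solves the recurrence, not that $\chi(d;n)$ satisfies it. On the other side of the ledger, your write-up is more complete than the paper's where it matters: you carry out the parity sign analysis explicitly, identify the exceptional case via $\chi(d;1)=-d(d-3)$ (zero exactly at $d=3$, the elliptic plane cubic), and settle odd $n\geq 3$ by the estimate $c(d-1)^n>2^n\geq n+2$, all of which the paper leaves to the reader.
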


\begin{proof} By Proposition~\ref{prop:euler}, we have an equality
$$d \deg c_n(X)=(1-d)^{n+2}-1+d(n+2).
$$ Then our claim follows from this equality.
\end{proof}

\begin{proposition}\label{prop:ci} Let $X$ be an $n$-dimensional smooth projective complete intersection of type $(d_1, d_2, \ldots, d_r)$. Assume that $2 \leq r$, $2\leq d_1 \leq d_2 \leq \ldots \leq d_r$ and $3 \leq d_r$. Then, if $n$ is even, then the degree of the top Chern class $\deg c_n(X)$ is positive. If $n$ is odd, then $\deg c_n(X)$ is negative.
\end{proposition}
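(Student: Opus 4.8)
The plan is to translate the statement into a single sign assertion about the integers $\chi(d_1,\ldots,d_r;n)$ from Definition~\ref{def:eq} and then prove it by induction using the recursion in Proposition~\ref{cor:formula}. Setting $s(d_1,\ldots,d_r;n):=(-1)^n\chi(d_1,\ldots,d_r;n)$, Proposition~\ref{prop:euler} shows that the conclusion to be proved is exactly
\[
s(d_1,\ldots,d_r;n)>0 \quad\text{whenever } r\ge 2,\ 2\le d_1\le\cdots\le d_r,\ d_r\ge 3.
\]
The key observation is that multiplying the identity of Proposition~\ref{cor:formula} by $(-1)^n$ and using $(-1)^n=-(-1)^{n-1}$ turns the alternating signs into a recursion with nonnegative coefficients,
\[
s(d_1,d_2,\ldots,d_r;n)=d_1\,s(d_2,\ldots,d_r;n)+(d_1-1)\,s(d_1,d_2,\ldots,d_r;n-1),
\]
so that, since $d_1\ge 2$ and $d_1-1\ge 1$, positivity of $s$ will propagate automatically.

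I would prove the displayed inequality by a double induction, primarily on the dimension $n$ and secondarily on the number $r$ of equations. The base case $n=0$ is immediate from Definition~\ref{def:eq}, where $s(d_1,\ldots,d_r;0)=\chi(d_1,\ldots,d_r;0)=d_1 d_2\cdots d_r>0$. For the inductive step I apply the recursion above. Its second summand $(d_1-1)\,s(d_1,\ldots,d_r;n-1)$ is strictly positive by the induction hypothesis on $n$, because the tuple $(d_1,\ldots,d_r)$ still satisfies $r\ge 2$ and $d_r\ge 3$. Hence it remains only to see that the first summand is nonnegative. If $r\ge 3$, the truncation $(d_2,\ldots,d_r)$ again meets the hypotheses, so $s(d_2,\ldots,d_r;n)>0$ by the secondary induction on $r$; if $r=2$, then $s(d_2;n)$ is the single-hypersurface quantity for the degree $d_2=d_r\ge 3$, which is nonnegative by Proposition~\ref{prop:hyp}. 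Adding the two contributions yields $s(d_1,\ldots,d_r;n)>0$, completing the induction.

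The one delicate point, and the reason for keeping the strict inequality on the second summand rather than the first, is the exceptional vanishing in Proposition~\ref{prop:hyp}: the elliptic-curve case $(n,d)=(1,3)$ gives $\chi(3;1)=0$, so the hypersurface term $s(d_2;n)$ may genuinely be zero. This causes no harm, since the surviving term $(d_1-1)\,s(d_1,\ldots,d_r;n-1)$ is still strictly positive and forces $s(d_1,\ldots,d_r;n)>0$ regardless. I would also emphasise that the hypothesis $d_r\ge 3$ is indispensable: it is precisely what prevents the truncation from terminating at a quadric hypersurface, to which Proposition~\ref{prop:hyp} does not apply and whose odd-dimensional Euler number has the opposite sign. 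The excluded type $(2,\ldots,2)$ — for instance a complete intersection of two quadrics, which in dimension one is an elliptic curve with $\chi=0$ — shows that the conclusion really breaks down once $d_r\ge 3$ is dropped.
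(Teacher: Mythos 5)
Your proof is correct and follows essentially the same route as the paper: both arguments rest on the recursion of Proposition~\ref{cor:formula} together with the hypersurface case (Proposition~\ref{prop:hyp}), run as an induction over the dimension and the number of defining equations. Your sign normalization $s=(-1)^n\chi$ is a clean repackaging that makes the recursion's coefficients nonnegative, allows the base case $n=0$, and absorbs the elliptic-curve exception $\chi(3;1)=0$ uniformly, whereas the paper keeps track of the alternating signs directly and treats $n=1$ as a separate base case of its induction on $n+r$.
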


\begin{proof} Let us first consider the case where $n=1$. In this case, by Proposition~\ref{cor:formula}, we have an equality
\begin{eqnarray}\label{eq:chi} 
\chi(d_1, d_2, \ldots, d_r; 1)&=&d_1\chi(d_2, \ldots, d_r; 1)-(d_1-1)\chi(d_1, d_2, \ldots, d_r; 0) \\
&=&d_1\chi(d_2, \ldots, d_r; 1)-(d_1-1)d_1d_2\cdots d_r. \nonumber
\end{eqnarray}
When $r=2$, we have $\chi(d_1,d_2;1)=d_1\chi(d_2;1)-(d_1-1)d_1d_2$. Since it follows from Proposition~\ref{prop:hyp} that $\chi(d_2;1)$ is nonpositive, $\chi(d_1,d_2;1)$ is negative. By induction on $r$, the equality (\ref{eq:chi}) tells us that $\chi(d_1, d_2, \ldots, d_r; 1)$ is negative. Hence our assertion holds for $n=1$ and $r \geq 2$. 

To prove our statement, we use induction on $n+r$. Remark that we have already dealt with the case where $n \geq 2$ and $r=1$ in Proposition~\ref{prop:hyp}. Hence if $n+r=3$, then our assertion holds. Put $m:=n+r$ and suppose the result is true for $m-1\geq 3$. By Proposition~\ref{cor:formula}, we have an equation 
$$\chi(d_1, d_2, \ldots, d_r; n)=d_1\chi(d_2, \ldots, d_r; n)-(d_1-1)\chi(d_1, d_2, \ldots, d_r; n-1)
$$
By the induction hypothesis, if $n$ is even, then we see that $\chi(d_2, \ldots, d_r; n)>0$ and $\chi(d_1, d_2, \ldots, d_r; n-1)<0$. Thus $\deg c_n(X)=\chi(d_1, d_2, \ldots, d_r; n)$ is positive. If $n$ is odd, the same argument implies the negativity of $\deg c_n(X)=\chi(d_1, d_2, \ldots, d_r; n)$.
\end{proof}

\begin{corollary}\label{cor:even} Let $X$ be an $n$-dimensional smooth projective complete intersection of type $(d_1, d_2, \ldots, d_r)$. Assume one of the following holds:
\begin{enumerate}
\item $r=1$ and $(n,d_1)\neq (2,3)$, or
\item $2 \leq r$, $2 \leq d_1 \leq d_2 \leq \ldots \leq d_r$ and $3 \leq d_r$.
\end{enumerate}
If $n$ is even, then $\deg c_n(X)>(n+1)d_1d_2\cdots d_r$.
\end{corollary}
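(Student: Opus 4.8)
The plan is to reduce everything to the positivity statements already in hand and to exploit the recursion of Proposition~\ref{cor:formula}. Writing $D := d_1d_2\cdots d_r$, I set
$$\psi(d_1,\ldots,d_r;n) := \chi(d_1,\ldots,d_r;n) - (n+1)D,$$
so that by Definition~\ref{def:eq} and Proposition~\ref{prop:euler} the assertion $\deg c_n(X) > (n+1)D$ is exactly $\psi(d_1,\ldots,d_r;n) > 0$. (This bound is precisely the threshold of Proposition~\ref{prop:LO}, which is why the corollary is the right input for the non-nefness conclusions later.) Throughout $n$ is even with $n\geq 2$, and I read case (1) as a hypersurface of degree $d_1\geq 3$: for $d_1\leq 2$, i.e. $\P^n$ or a quadric, one has $\deg c_n(X)\leq (n+1)d_1$, so the inequality is not claimed there. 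I would prove case (1) first and use it as the base of an induction for case (2).

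For case (1) I would argue by direct computation. By the identity in the proof of Proposition~\ref{prop:hyp}, namely $d\deg c_n(X) = (1-d)^{n+2} - 1 + d(n+2)$, and using that $n+2$ is even so $(1-d)^{n+2} = (d-1)^{n+2}$, the inequality $\deg c_n(X) > (n+1)d$ becomes, after multiplying by $d$ and factoring out $(d-1)$,
$$(d-1)^{n+1} > (n+1)d - 1.$$
Substituting $e := d-1 \geq 2$ this reads $e^{n+1} > (n+1)e + n$. I would establish the non-strict version $e^{n+1} \geq (n+1)e+n$ by noting that $e\mapsto e^{n+1}-(n+1)e$ is non-decreasing for $e\geq 1$, so its minimum over integers $e\geq 2$ occurs at $e=2$, where the left side minus the right equals $2^{n+1}-3n-2\geq 0$, with equality exactly at $n=2$. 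Hence equality holds only for $(n,e)=(2,2)$, i.e. $(n,d_1)=(2,3)$; thus $\psi(d_1;n) \geq 0$, strictly unless $(n,d_1)=(2,3)$.

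For case (2) the key step is to rewrite Proposition~\ref{cor:formula} in terms of $\psi$. Since $d_1(d_2\cdots d_r) = D$, subtracting $(n+1)D = d_1\cdot(n+1)(d_2\cdots d_r)$ from both sides of
$$\chi(d_1,\ldots,d_r;n) = d_1\chi(d_2,\ldots,d_r;n) - (d_1-1)\chi(d_1,\ldots,d_r;n-1)$$
converts the first summand on the right into $d_1\psi(d_2,\ldots,d_r;n)$ and leaves the second untouched, yielding
$$\psi(d_1,\ldots,d_r;n) = d_1\,\psi(d_2,\ldots,d_r;n) - (d_1-1)\,\chi(d_1,\ldots,d_r;n-1).$$
Now I would induct on $r\geq 2$. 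Since $n$ is even, $n-1$ is odd, and $(d_1,\ldots,d_r)$ satisfies the hypotheses of Proposition~\ref{prop:ci}, whence $\chi(d_1,\ldots,d_r;n-1)<0$; as $d_1\geq 2$, the term $-(d_1-1)\chi(d_1,\ldots,d_r;n-1)$ is strictly positive. For the first summand, $\psi(d_2,\ldots,d_r;n)\geq 0$: when $r=2$ this is case (1) for the hypersurface of degree $d_2=d_r\geq 3$, and when $r\geq 3$ it is the induction hypothesis, $(d_2,\ldots,d_r)$ again satisfying the case (2) hypotheses. Adding the two gives $\psi(d_1,\ldots,d_r;n)>0$.

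The main obstacle is the algebraic recombination producing the displayed recursion for $\psi$: one must verify that $(n+1)D$ is absorbed entirely into the first summand, leaving the $\chi(\ldots;n-1)$ term unaffected, rather than splitting into an unwanted linear error term. Once this is in place the argument is essentially automatic, and in particular the boundary value $(n,d_r)=(2,3)$ (where the first summand vanishes) causes no difficulty, since the second summand is strictly positive on its own.
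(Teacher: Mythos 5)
Your proposal is correct and takes essentially the same route as the paper: the identical recursion from Proposition~\ref{cor:formula}, induction on $r$ with the negativity of the odd-dimensional term $\chi(d_1,\ldots,d_r;n-1)$ supplied by Proposition~\ref{prop:ci}, and the base case drawn from the hypersurface case; your $\psi$ is a purely cosmetic repackaging, and your case (1) computation simply fills in what the paper dismisses as ``straightforward.'' Your explicit restriction of case (1) to $d_1\geq 3$ is also the right (indeed necessary) reading, since for $d_1\in\{1,2\}$ the strict inequality literally fails ($\deg c_n(\P^n)=n+1$ and $\deg c_n(Q^n)=n+2<2(n+1)$ for even $n$), a point the paper's statement and proof gloss over but which is harmless in the paper's applications.
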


\begin{proof} As the first case, assume that $r=1$ and $(n,d_1)\neq (2,3)$. By Proposition~\ref{prop:euler}, we have an equality
$$\deg c_n(X)=\frac{(1-d_1)^{n+2}-1}{d_1}+n+2.
$$ 
Then it is straightforward to show that $\deg c_n(X)>(n+1)d_1$. We also remark that $\deg c_n(X)=(n+1)d_1$ provided that $r=1$ and $(n,d_1)= (2,3)$.

As the second case, let us assume that $2 \leq r$, $2 \leq d_1 \leq d_2 \leq \ldots \leq d_r$ and $3 \leq d_r$. We proceed induction on $r$. When $r=2$, the previous argument implies that $\chi(d_2;n) \geq (n+1)d_2$, and it follows from Proposition~\ref{prop:ci} that $\chi(d_1, d_2; n-1) <0$. Hence, by Proposition~\ref{cor:formula}, 
$$\chi(d_1, d_2; n)=d_1\chi(d_2; n)-(d_1-1)\chi(d_1, d_2; n-1)>(n+1)d_1d_2.
$$ We may therefore assume that our claim holds for $r-1$. Then we have $$\chi(d_2, \ldots, d_r; n)>(n+1)d_2\cdots d_r,$$ and it follows from Proposition~\ref{prop:ci} that $\chi(d_1, d_2, \ldots, d_r; n-1)<0$. Thus Proposition~\ref{cor:formula} concludes 
$$
\chi(d_1, d_2, \ldots, d_r; n)>(n+1)d_1\cdots d_r
$$
\end{proof}

\begin{proposition}\label{prop:quad} Let $X$ be an $n$-dimensional smooth projective complete intersection of $r$ quadrics. Assume that $r \geq 3$. If $n$ is odd, then $\deg c_n(X)$ is negative. If $n$ is even and $(n, r) \neq (2,3)$, then $\deg c_n(X)>2^r(n+1)$.
\end{proposition}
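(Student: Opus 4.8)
The plan is to reduce the statement to the arithmetic of the function $\chi(d_1,\dots,d_r;n)$ of Definition~\ref{def:eq}, since $\deg c_n(X)=\chi(d_1,\dots,d_r;n)$ by Proposition~\ref{prop:euler}, and to exploit the recursion of Proposition~\ref{cor:formula}. As every $d_i=2$, write $\chi_r(n):=\chi(\underbrace{2,\dots,2}_{r};n)$; specializing Proposition~\ref{cor:formula} to $d_1=2$ collapses it to the single recursion
\[
\chi_r(n)=2\chi_{r-1}(n)-\chi_r(n-1).
\]
The input data are $\chi_r(0)=2^r$ (Definition~\ref{def:eq}) and, from Proposition~\ref{prop:euler}, $\chi_1(n)=n+2$ for $n$ even and $\chi_1(n)=n+1$ for $n$ odd. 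The decisive preliminary computation is the case $r=2$: feeding these values into the recursion and inducting on $n$ shows
\[
\chi_2(n)=\begin{cases}2(n+2)&n\text{ even},\\ 0&n\text{ odd}.\end{cases}
\]
The vanishing $\chi_2(n)=0$ in odd dimensions is the feature that makes the quadric case special, and it drives the rest of the argument.

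I would then prove the two assertions simultaneously by induction on $r\ge 3$, with an inner induction on $n$, maintaining the hypotheses: (i) $\chi_r(n)<0$ for $n$ odd, and (ii) $\chi_r(n)\ge 2^r(n+1)$ for $n$ even, with strict inequality unless $(n,r)=(2,3)$. These interlock through the recursion. For odd $n$, the dimension $n-1$ is even, so $\chi_r(n)=2\chi_{r-1}(n)-\chi_r(n-1)$ with $\chi_{r-1}(n)\le 0$ (hypothesis (i) for $r-1$, or $\chi_2=0$ in odd dimensions when $r=3$) and $\chi_r(n-1)>0$ (hypothesis (ii) at the smaller dimension $n-1$), forcing $\chi_r(n)<0$. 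For even $n$, the dimension $n-1$ is odd, so $\chi_r(n)=2\chi_{r-1}(n)+|\chi_r(n-1)|$ with $\chi_r(n-1)<0$ coming from (i) at $n-1$.

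The subtle point, and the main obstacle, is the base level $r=3$, where the bound in (ii) is tight: since $2\chi_2(n)=4(n+2)$ is \emph{smaller} than $2^3(n+1)=8(n+1)$, the term $2\chi_{r-1}(n)$ alone cannot deliver the bound, so a uniform chaining of the recursion starting from $r=2$ fails. Here one must use $\chi_2(n)=0$ for odd $n$, which collapses the recursion to
\[
\chi_3(n)=-\chi_3(n-1)\ \ (n\text{ odd}),\qquad \chi_3(n)=4(n+2)+\chi_3(n-2)\ \ (n\text{ even}).
\]
An induction on $n$ then gives $\chi_3(n)<0$ for odd $n$, and $\chi_3(n)\ge 8(n+1)$ for even $n\ge 2$ with equality exactly at $n=2$ (indeed $\chi_3(2)=24=2^3\cdot 3$). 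This pins down precisely the excluded pair $(n,r)=(2,3)$.

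Finally, for $r\ge 4$ the bound becomes self-propagating and no further obstacle arises. Hypothesis (ii) for $r-1$ gives $\chi_{r-1}(n)\ge 2^{r-1}(n+1)$ for even $n$ (the non-strict form suffices, so the case $(n,r-1)=(2,3)$ is harmless), whence
\[
\chi_r(n)=2\chi_{r-1}(n)+|\chi_r(n-1)|\ge 2^r(n+1)+|\chi_r(n-1)|>2^r(n+1),
\]
the strictness coming for free from $|\chi_r(n-1)|>0$; the odd case is immediate from (i) for $r-1$ and (ii) at $n-1$. Thus the entire exceptional behavior is isolated at $r=3$, $n=2$, where the identity $\chi_2(n)=0$ ($n$ odd) must be invoked to compute the value exactly rather than merely bound it.
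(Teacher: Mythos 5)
Your proof is correct and takes essentially the same route as the paper's: the paper sets $b(n,r)=(-1)^n\deg c_n(X)/2^r$, derives from Proposition~\ref{cor:formula} the same specialized recursion (your $\chi_r(n)=2\chi_{r-1}(n)-\chi_r(n-1)$ becomes $b(n,r)=b(n,r-1)+b(n-1,r)$), computes the same $r\le 2$ data (your vanishing $\chi_2(n)=0$ for odd $n$ is the paper's $b(n,2)=0$), and runs the same parity-sensitive double induction in $(n,r)$ with $r=3$ as the delicate base level, where the exact value $\chi_3(2)=24$ (the paper's $b(2,3)=3=n+1$) isolates the exceptional pair $(2,3)$. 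The only differences are cosmetic: the sign/scale normalization, and your propagating the even-dimensional bound upward in $r$ via the factor of $2$ rather than reducing to $r=3$ by the monotonicity $b(n,r)>b(n,r-1)$.
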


\begin{proof} 
For positive integers $n,r$, let us set
$$
b(n,r):=\frac{(-1)^n \deg c_n(X)}{2^r}.
$$ 
By Proposition~\ref{prop:euler} and Proposition~\ref{cor:formula}, we have a relation
\begin{equation}\label{eq:b}
b(n,r)=b(n,r-1)+b(n-1,r).
\end{equation}
It also follows from Proposition~\ref{prop:euler} that 
$$b(n,1)=\frac{(-1)^{n}(2n+3)+1}{4}~~\mbox{and}~~ b(1,r)=r-2.$$  
By a straightforward computation, we have 
\begin{equation}\label{eq:b2}
b(n,2)=
\begin{cases}
\frac{n}{2}+1 & \text{if $n$ is even} \\
0 & \text{if $n$ is odd}
\end{cases}
\end{equation}
To prove the proposition, it is enough to show the following claim:
\begin{enumerate}
\item[(i)] If $r \geq 3$, then $b(n,r)>0$.
\item[(ii)] If $r \geq 3$, $n$ is even and $(n,r)\neq (2,3)$, then $b(n,r)>n+1$. 
\end{enumerate}
From now on, we assume that $r \geq 3$.  By the above equation (\ref{eq:b2}), $b(n,2)$ is non-negative. Thus it follows from (\ref{eq:b}) that 
$$b(n,3)=b(n,2)+b(n-1,3)\geq b(n-1,3) \geq \ldots \geq b(1,3)=1.$$
Since we also see $b(1,r)=r-2\geq 1$, the above equation (\ref{eq:b}) concludes $\rm (i)$.

Assume that $n \geq 2$ and $r \geq 3$. By (\ref{eq:b}) and $\rm (i)$, we have 
$$b(n,r)=b(n,r-1)+b(n-1,r) >b(n, r-1).$$
If $n=2$, then for any $r \geq 4$, we have
$$
b(2,r)>b(2,3)=3=n+1.
$$
If $n=4$, then for any $r \geq 3$, we have
$$
b(4,r)\geq b(4,3)=6>n+1.
$$
By induction on $n$, if $n$ is even such that $n \geq 6$, then for any $r \geq 3$,
\begin{eqnarray}
b(n,r)&\geq& b(n,3)=b(n,2)+b(n-1,3) \nonumber \\
&=&\frac{n}{2}+1+b(n-1,2)+b(n-2,3) \nonumber \\
&>& \frac{n}{2}+1 +(n-1)=\frac{3}{2}n>n+1. \nonumber 
\end{eqnarray}
\end{proof}

\if0
\begin{proposition}\label{prop:quad} Let $X$ be an $n$-dimensional smooth projective complete intersection of $r$ quadrics. If $r \geq 2$, then $c_n(X)>2^r(n+1)$.
\end{proposition}

\begin{proof} By Proposition~\ref{prop:euler}, we have an equality
$$c_n(X)=2^r\Biggl\{\sum_{i=0}^n   \binom{n+r+1}{i} (-2)^{n-i}\Biggr\}.
$$
Let us set
$$
a(n,r):=\sum_{i=0}^n   \binom{n+r+1}{i} (-2)^{n-i},
$$ and it is enough to prove $a(n,r)>n+1$. 
A direct computation shows 
$$a(n,r)=a(n,r-1)+a(n-1,r).
$$
Since $a(1,r)=r$ and $a(n,2)=n+\frac{n^2}{4}+\frac{7+(-1)^n}{8}>0$, we see that $a(n,r)>0$.

If $r=2$, then 
$$a(n,2)=n+\frac{n^2}{4}+\frac{7+(-1)^n}{8}>n+1.
$$Thus we suppose that $a(n,r)>n+1$ for $r \geq 2$. By induction on $r$ and the positivity of $a(n,r)$, we obtain
$$a(n,r)=a(n,r-1)+a(n-1,r)>a(n,r-1)>n+1.
$$
As a consequence, our assertion holds.
\end{proof}
\fi

\begin{theorem}\label{them:ci} 
Let $X$ be a smooth projective complete intersection of hypersurfaces. Assume that the diagonal $\Delta_X$ is nef. Then $X$ is a projective space, a quadric or an odd-dimensional complete intersection of two quadrics.
\end{theorem}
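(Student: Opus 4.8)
The plan is to combine the Euler-characteristic computations of the previous propositions with the numerical obstruction coming from Proposition~\ref{prop:LO}. The key point is that a smooth complete intersection of type $(d_1,\ldots,d_r)$ in $\P^{n+r}$ carries a finite morphism $f\colon X\to\P^n$ of degree $d=d_1d_2\cdots d_r$, obtained by projecting from a general linear subspace of dimension $r-1$ disjoint from $X$. So Proposition~\ref{prop:LO} applies directly: if $\deg c_n(X)>(n+1)d_1\cdots d_r$, then $\Delta_X$ is \emph{not} nef. I would first dispose of the degenerate cases: if $r=0$ then $X=\P^n$, which has nef diagonal by Proposition~\ref{prop:fps}; and if some $d_i=1$ the hypersurface is a hyperplane and can be deleted, so after reindexing I may assume all $d_i\geq 2$. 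The remaining case where all $d_i=2$ (complete intersections of quadrics) is governed by Proposition~\ref{prop:quad}, and the case where $d_r\geq 3$ by Proposition~\ref{prop:ci} and Corollary~\ref{cor:even}.

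The heart of the argument is a case split according to the parity of $n$ and the number and degrees of the hypersurfaces. First I would treat a single hypersurface, $r=1$: by Corollary~\ref{cor:even}(1), if $n$ is even and $(n,d_1)\neq(2,3)$ then $\deg c_n(X)>(n+1)d_1$, so $\Delta_X$ is not nef; the borderline case $(n,d_1)=(2,3)$ is a cubic surface, which I must rule out separately (it has Picard number seven, contradicting Proposition~\ref{prop:nefbig:Pic}, or more simply it admits an extremal birational contraction contradicting Proposition~\ref{prop:bir}). If $n$ is odd and $d_1\geq 3$, then Proposition~\ref{prop:hyp} gives $\deg c_n(X)<0$; since the diagonal always satisfies $\deg\Delta_X^2=\deg c_n(X)$ by Proposition~\ref{prop:diag^2}, and a nef cycle has non-negative self-intersection when paired against itself (as $\Delta_X$ is pseudoeffective), this negativity directly forbids nefness. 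The surviving hypersurfaces are then exactly $\P^n$ (degree one) and quadrics.

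Next I would handle $r\geq 2$ with $d_r\geq 3$. When $n$ is even, Corollary~\ref{cor:even}(2) yields $\deg c_n(X)>(n+1)d_1\cdots d_r$, so Proposition~\ref{prop:LO} kills nefness. When $n$ is odd, Proposition~\ref{prop:ci} gives $\deg c_n(X)<0$, and again Proposition~\ref{prop:diag^2} forbids a nef diagonal. This eliminates every complete intersection involving a hypersurface of degree at least three (beyond the single-quadric and projective-space cases). Finally, for complete intersections of $r$ quadrics with $r\geq 3$: if $n$ is odd, Proposition~\ref{prop:quad} gives $\deg c_n(X)<0$, excluded as before; if $n$ is even and $(n,r)\neq(2,3)$ then $\deg c_n(X)>2^r(n+1)=(n+1)d_1\cdots d_r$, excluded by Proposition~\ref{prop:LO}. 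The sole leftover is the even-dimensional exceptional pair $(n,r)=(2,3)$, a complete intersection of three quadrics in $\P^5$, which I would rule out by a direct check of its invariants (it is a degree-eight surface of general type with large second Betti number, incompatible with Proposition~\ref{prop:nefbig:Pic} together with the nefness-forces-pseudoeffective-equals-nef structure). What remains after all exclusions is precisely a projective space, a quadric ($r=1$, $d_1=2$, any $n$), or a complete intersection of two quadrics ($r=2$, $d_1=d_2=2$) with $n$ odd, since $b(n,2)=0$ exactly when $n$ is odd by equation~\eqref{eq:b2}, making the numerical obstruction vanish in that one family.

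The main obstacle I anticipate is not the generic case---which the three propositions dispatch cleanly via sign and size of $\deg c_n(X)$---but rather the handful of \emph{boundary} cases where $\deg c_n(X)$ exactly equals $(n+1)d_1\cdots d_r$ or vanishes, since Proposition~\ref{prop:LO} gives no information at equality and Proposition~\ref{prop:diag^2} only rules out \emph{negative} self-intersection. Concretely these are the cubic surface $(n,r,d_1)=(2,1,3)$, the three-quadric surface $(n,r)=(2,3)$, and the odd-dimensional two-quadric intersections; the first two must be excluded by finer geometry (Picard number via Proposition~\ref{prop:nefbig:Pic}, or the fiber-type contraction criterion of Proposition~\ref{prop:bir}), while the last genuinely survives and accounts for the third item in the conclusion. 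Verifying that these borderline varieties behave as claimed, rather than slipping through the numerical net, is where the argument requires the most care.
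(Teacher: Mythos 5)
Your overall strategy is the same as the paper's: sign and size estimates on $\deg c_n(X)$ (Proposition~\ref{prop:hyp}, Proposition~\ref{prop:ci}, Corollary~\ref{cor:even}, Proposition~\ref{prop:quad}), combined with $\deg\Delta_X^2=\deg c_n(X)$ (Proposition~\ref{prop:diag^2}) and the finite-projection criterion (Proposition~\ref{prop:LO}), plus separate treatment of boundary cases. However, your list of boundary cases has a genuine hole: the \emph{even-dimensional} complete intersections of two quadrics. For a $(2,2)$ complete intersection of even dimension $n$, equation~(\ref{eq:b2}) gives $b(n,2)=\frac{n}{2}+1$, hence $\deg c_n(X)=2^2\,b(n,2)=2n+4$. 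This is positive, so the self-intersection argument gives nothing, and it is strictly smaller than $(n+1)d_1d_2=4n+4$, so Proposition~\ref{prop:LO} never applies either. Your claim that ``what remains after all exclusions is precisely \dots\ a complete intersection of two quadrics with $n$ odd'' is therefore false: the even-dimensional ones also survive every numerical test you run, yet they are not in the conclusion of the theorem. Excluding them requires a new, geometric input, which is exactly the paper's Lemma~\ref{lem:except}(3): by \cite{R}, a $2n$-dimensional $(2,2)$ complete intersection contains two $n$-planes $\Lambda_1,\Lambda_2$ with $\dim\Lambda_1\cap\Lambda_2=1$ and $\deg\Lambda_1\cdot\Lambda_2=-1$, so by Proposition~\ref{prop:pseffcone} the diagonal cannot be nef. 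This idea is not recoverable from the Euler-characteristic computations, and without it the proof does not establish the statement.

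There are also problems in how you dispose of the boundary surfaces you did identify. The $(2,2,2)$ surface in $\P^5$ is not ``of general type'': its canonical class is $\cO_X(2+2+2-6)=\cO_X$, so it is a K3 surface, and the paper excludes it by citing \cite[Theorem~6.6]{LO} on K3 surfaces; your proposed ``direct check of invariants'' does not work as stated. Moreover, both there and for the cubic surface you invoke Proposition~\ref{prop:nefbig:Pic}, whose hypothesis is that $\Delta_X$ is nef \emph{and big}; the theorem assumes only nefness, so you are not entitled to that proposition. Your alternative argument for the cubic surface via Proposition~\ref{prop:bir} (an extremal birational contraction of a $(-1)$-curve) is valid and is essentially what the paper does. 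Finally, a minor point: Proposition~\ref{prop:hyp} carries the exception $(n,d)=(1,3)$, which you drop when asserting $\deg c_n(X)<0$ for all odd $n$ and $d_1\geq 3$; the plane cubic is an elliptic curve with nef diagonal, so you should dispose of curves at the outset (via Example~\ref{ex:curve}, noting every elliptic curve is isomorphic to a $(2,2)$ curve in $\P^3$), as the paper implicitly does by assuming $n\geq 2$.
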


\begin{proof} Let $X$ be an $n$-dimensional smooth projective complete intersection of type $(d_1, d_2, \ldots, d_r)$. Assume moreover $X$ is not a projective space, a quadric or an odd-dimensional complete intersection of two quadrics.
We may assume that $n \geq 2$ and $d_i>1$. By Lemma~\ref{lem:except} below, we may also assume that $X$ is not a cubic surface, a $2$-dimensional complete intersection of type $(2,2, 2)$ and an even-dimensional complete intersection of type $(2,2)$.   
Then, applying Proposition~\ref{prop:ci}, Corollary~\ref{cor:even} and Proposition~\ref{prop:quad}, we see that 
$$\deg c_n(X)<0\,\,\,\,\mbox{or}\,\, \deg c_n(X)>d_1d_2 \cdots d_r(n+1).$$ 
If $\deg c_n(X)<0$, then Proposition~\ref{prop:diag^2} tells us $\deg \Delta_X^2=\deg c_n(X)<0$. Hence $\Delta_X$ is not nef. Suppose that $\deg c_n(X)>d_1d_2 \cdots d_r(n+1)$. For $X \subset \P^{n+r}$, let us consider a general projection $\pi: X \to \P^n$ from a linear subspace of dimension $r-1$. Then $\pi$ is a finite morphism of degree $d_1d_2 \cdots d_r$. Applying Proposition~\ref{prop:LO}, we see that $\Delta_X$ is not nef.
\end{proof}

\begin{lemma}\label{lem:except} Let $X$ be one of the following:
\begin{enumerate}
\item a cubic surface,
\item a $2$-dimensional complete intersection of type $(2, 2, 2)$ or
\item an even-dimensional complete intersection of type $(2,2)$.
\end{enumerate}
Then $\Delta_X$ is not nef
\end{lemma}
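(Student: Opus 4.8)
The common engine is Proposition~\ref{prop:pseffcone}: if $\Delta_X$ is nef then every pseudoeffective cycle, in every codimension, is nef; equivalently, any two effective cycles of complementary dimension on $X$ meet non-negatively. The plan is to produce, in each case, two effective cycles $\alpha,\beta$ with $\dim\alpha+\dim\beta=\dim X$ and $\alpha\cdot\beta<0$. I would first record why the earlier numerical criteria cannot be used: for all three families one computes $\deg\Delta_X^2=\deg c_n(X)\ge 0$ (namely $9$, $24$, and $2n+4$), while the degree of a general linear projection to $\P^n$ meets the bound of Proposition~\ref{prop:LO} with equality (cubic surface and K3) or fails it (even-dimensional $(2,2)$, where $2n+4<4(n+1)$). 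So Propositions~\ref{prop:diag^2} and \ref{prop:LO} give nothing, and the obstruction must be found among genuine cycle classes.

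For the cubic surface (case (1)) and the two-dimensional $(2,2)$ (case (3) with $n=2$, a del Pezzo surface of degree four) I would use lines. Such an $X$ contains a line $\ell\cong\P^1$, and the conormal sequence $0\to N_{\ell/X}\to N_{\ell/\P^{N}}\to N_{X/\P^{N}}|_\ell\to 0$ yields $\deg N_{\ell/X}=-1$, i.e. $\ell^2=-1$. As $[\ell]$ is effective but $[\ell]\cdot[\ell]<0$, it is not nef, contradicting Proposition~\ref{prop:pseffcone}. (Equivalently, contracting $\ell$ is a birational extremal contraction, excluded by Proposition~\ref{prop:bir}.)

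For an even-dimensional $(2,2)$ of dimension $n=2m\ge 4$ the relevant classes are the middle-dimensional $m$-planes $\Lambda\cong\P^m\subset X$. From $0\to N_{\Lambda/X}\to\cO_\Lambda(1)^{\oplus(m+2)}\to\cO_\Lambda(2)^{\oplus 2}\to 0$ one gets $c(N_{\Lambda/X})=(1+h)^{m+2}/(1+2h)^2$, whence $\Lambda\cdot\Lambda=\big[(1+h)^{m+2}(1+2h)^{-2}\big]_{h^m}=(-1)^m\lceil(m+1)/2\rceil$; the same excess computation shows that two $m$-planes meeting along a $\P^k$ satisfy $\Lambda_1\cdot\Lambda_2=(-1)^k\lceil(k+1)/2\rceil$. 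When $m$ is odd ($n\equiv 2\bmod 4$) a single plane already has $\Lambda^2<0$ and we are done. When $m$ is even ($n\equiv 0\bmod 4$) the self-intersection is positive — indeed a Hirzebruch signature computation shows the middle cohomology is of Tate type with positive-definite intersection form — so here I would instead exhibit two distinct $m$-planes meeting along a linear space of odd dimension (for instance along an $(m-1)$-plane), whose intersection number is then negative. Their existence I would extract from the structure of the Fano scheme of $m$-planes, which by Reid's analysis is a torsor under $\mathrm{Jac}(C)[2]$ for the associated hyperelliptic curve $C$, the incidence being governed by that group.

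The genuinely hard case is the K3 surface arising as $(2,2,2)$. For a general such surface $N^1(X)=\R H$ with $H^2>0$, so there is no negative curve and the failure of $\Delta_X$ to be nef is invisible on $X$ itself; it is detected only on $X\times X$. Here $h^{2,0}(X)=1$, and the two positive directions of the intersection form coming from $H^{2,0}\oplus H^{0,2}$ inside the transcendental lattice produce, via the (algebraic) transcendental projector, a pseudoeffective $2$-cycle on $X\times X$ meeting $\Delta_X$ negatively — this is the Lehmann–Ottem obstruction for surfaces with $p_g>0$ \cite{LO}. Making this transcendental positivity argument precise, together with pinning down the required plane configurations in the $n\equiv 0\bmod 4$ case, is where the real work lies.
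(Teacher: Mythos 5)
Your overall strategy is the paper's: invoke Proposition~\ref{prop:pseffcone} to reduce everything to exhibiting effective cycles of complementary dimension with negative intersection, handle the cubic surface and the degree-four del Pezzo surface by a $(-1)$-curve, defer the K3 case to Lehmann--Ottem (the paper cites \cite[Theorem~6.6]{LO} for exactly this; your sketch of the transcendental-lattice obstruction is the right mechanism behind that theorem, but you do not need to reprove it), and attack the higher-dimensional $(2,2)$ via middle-dimensional planes. Your excess-intersection formula $\Lambda_1\cdot\Lambda_2=(-1)^k\lceil(k+1)/2\rceil$ for two $m$-planes meeting along a $\P^k$ is correct and, at $k=1$, recovers precisely the number $-1$ the paper gets from \cite[Lemma~3.10]{R}.

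The genuine gap is in your $n\equiv 0\bmod 4$ subcase: you need two distinct $m$-planes meeting along an odd-dimensional linear space, and you only gesture at the Fano scheme being a torsor under $\mathrm{Jac}(C)[2]$ ``with the incidence governed by that group,'' explicitly conceding that pinning this down is ``where the real work lies.'' That existence statement is the entire content of the step, and as written it is not established. The paper closes it with a single citation, \cite[Theorem~3.8]{R}, which guarantees for every even-dimensional smooth $(2,2)$ a pair of $m$-planes $\Lambda_1,\Lambda_2$ with $\dim\Lambda_1\cap\Lambda_2=1$; combined with the $k=1$ value of your own formula this handles all even dimensions uniformly, with no parity split on $m$ and no need for the self-intersection computation. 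So the fix is not more work but less: drop the case division, take $k=1$ throughout, and supply the reference (or a proof) that two planes meeting along a line exist.
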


\begin{proof} Any cubic surface has a $(-1)$-curve. Then it follows from Proposition~\ref{prop:pseffcone} that the diagonal of a cubic surface is not nef. If $X$ is a $2$-dimensional complete intersection of type $(2,2, 2)$, then it is a K3 surface. Hence $\Delta_X$ is not nef by \cite[Theorem~6.6]{LO}. 
Let $X$ be an $2n$-dimensional complete intersection of type $(2,2)$. By \cite[Theorem~3.8]{R}, we may take $n$-planes $\Lambda_1, \Lambda_2 \subset X$ such that $\dim \Lambda_1\cap \Lambda_2=1$. Applying \cite[Lemma~3.10]{R}, we have $\deg \Lambda_1\cdot \Lambda_2=-1$. Thus $\Delta_X$ is not nef. 
\end{proof}

\begin{remark}\label{rem:22} Any $(2n+1)$-dimensional smooth projective complete intersection $X$ of two quadrics has Betti numbers 
\begin{equation}\label{eq:b2}
\nonumber  b_{2k}(X)=1;~~
b_{2k+1}(X)=
\begin{cases}
0 & \text{if $k\neq n$,}\\
2n+2 & \text{if $k=n$}.
\end{cases}
\end{equation}
In particular, any effective cycle on $X$ is nef, $\deg c_{2n+1}(X)=0$ and $X$ is not a fake projective space. Thus we cannot conclude the nefness/non-nefness of the diagonal $\Delta_X$ from the criteria in Section~\ref{subsec:nef}.
\end{remark}

\begin{question}\label{ques:22} Does any odd-dimensional complete intersection $X$ of two quadrics have nef diagonal
?
\end{question}

We do not know the answer to this question even for the $3$-dimensional case (see \cite[Section~8.1]{LO}). 

\begin{corollary}\label{cor:ci:big} Let $X$ be a smooth projective complete intersection of hypersurfaces. Then the diagonal $\Delta_X$ is nef and big if and only if $X$ is a projective space or an odd-dimensional quadric.
\end{corollary}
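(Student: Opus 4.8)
The plan is to combine the classification in Theorem~\ref{them:ci} with a single numerical criterion for bigness, and then dispose of the two types that survive nefness but should fail bigness. First I would isolate a general lemma, valid for any smooth projective $X$: if $\Delta_X$ is nef and big, then $\deg c_n(X)=\deg\Delta_X^2>0$. To prove it, note that bigness places $\Delta_X$ in the interior of $\overline{\Eff}_n(X\times X)$, so for an ample complete-intersection class $H=h_1^a h_2^{n-a}$ (with $h_1,h_2$ the pullbacks of an ample class on the two factors) the class $\Delta_X-\eps H$ is pseudoeffective for small $\eps>0$. Since $\Delta_X$ is nef, pairing gives
$$\deg\Delta_X^2=\deg\bigl(\Delta_X\cdot(\Delta_X-\eps H)\bigr)+\eps\deg(\Delta_X\cdot H)\ \ge\ \eps\deg(\Delta_X\cdot H),$$
and because $h$ restricts to an ample class on $\Delta_X\cong X$ one has $\deg(\Delta_X\cdot H)=\deg_X h^n>0$; Proposition~\ref{prop:diag^2} identifies the left-hand side with $\deg c_n(X)$.

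For the ``if'' direction I would simply invoke Proposition~\ref{prop:fps}: a projective space has nef and big diagonal, and an odd-dimensional quadric $Q^n$ is either $\P^1$ (when $n=1$) or, for $n\ge 3$, a variety with the Betti numbers of $\P^n$ that is not isomorphic to $\P^n$, hence a fake projective space. In all cases Proposition~\ref{prop:fps} yields the nef and big diagonal.

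For the ``only if'' direction, assume $\Delta_X$ is nef and big. By Theorem~\ref{them:ci}, $X$ is a projective space, a quadric, or an odd-dimensional complete intersection of two quadrics, so it remains to exclude even quadrics and odd-dimensional complete intersections of two quadrics. The latter have $\deg c_n(X)=0$ by Remark~\ref{rem:22}, which contradicts the lemma. For an even quadric $Q=Q^{2m}$ I would show the diagonal, though nef, is not big. Since $T_Q$ is nef, so is $T_{Q\times Q}$; hence $Q\times Q$ has nef diagonal and, by Proposition~\ref{prop:pseffcone}, every effective cycle on $Q\times Q$ is nef. Let $A,B\subset Q$ be the two rulings by $m$-planes; from $[A]+[B]=h^m$ and $\deg h^{2m}=2$ one computes $[A]\cdot[B]=0,\ [A]^2=[B]^2=1$ for $m$ even and $[A]^2=[B]^2=0,\ [A]\cdot[B]=1$ for $m$ odd. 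Taking $(\gamma,\delta)=(A,B)$ for $m$ even and $(\gamma,\delta)=(A,A)$ for $m$ odd, the cycle $\gamma\times\delta$ is a nonzero effective, hence nef, $n$-cycle on $Q\times Q$ with $\deg\bigl(\Delta_Q\cdot(\gamma\times\delta)\bigr)=\deg_Q(\gamma\cdot\delta)=0$. A nonzero nef class orthogonal to $\Delta_Q$ keeps $\Delta_Q$ out of the interior of $\overline{\Eff}_n(Q\times Q)$, so $\Delta_Q$ is not big; this excludes even quadrics and, together with the ``if'' direction, finishes the proof.

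I expect the even-quadric case to be the only genuine obstacle. There the diagonal is automatically nef, Proposition~\ref{prop:nefbig:Pic} fails to exclude it once $m\ge 2$ (the Picard number is one), and the self-intersection $\deg c_n(Q)=n+2$ is positive, so every coarse criterion is silent. The decisive point making the argument go through is that $Q\times Q$ itself has nef diagonal: this promotes the effective ruling products $\gamma\times\delta$ to nef classes, which can then serve as the supporting functional certifying that $\Delta_Q$ lies on the boundary of the pseudoeffective cone.
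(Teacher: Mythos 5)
Your proof is correct, and its skeleton is the same as the paper's: reduce to the classification in Theorem~\ref{them:ci}, dispose of odd-dimensional intersections of two quadrics via $\deg\Delta_X^2=\deg c_n(X)=0$, and invoke Proposition~\ref{prop:fps} for the converse. The differences lie in the two supporting steps. First, you isolate the implication ``$\Delta_X$ nef and big $\Rightarrow\deg\Delta_X^2>0$'' as a standalone lemma, proved by perturbing $\Delta_X$ inside the interior of $\overline{\Eff}_n(X\times X)$ and pairing with an ample complete-intersection class; the paper runs exactly this duality argument, but inline and in contrapositive form, only for the $(2,2)$ case (self-intersection zero plus nefness forces $\Delta_X$ onto the boundary). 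That is a cosmetic difference, though your formulation is cleaner and reusable. Second, and more substantively, where the paper excludes even-dimensional quadrics simply by citing \cite[Section~7.1]{LO}, you give a self-contained argument: $Q\times Q$ has nef tangent bundle, hence nef diagonal, hence by Proposition~\ref{prop:pseffcone} the effective class $\gamma\times\delta$ built from the two rulings of $m$-planes is nef; your choice of $(\gamma,\delta)$ with $\deg_Q(\gamma\cdot\delta)=0$ (and the intersection pattern you use --- $[A]^2=[B]^2=1$, $[A]\cdot[B]=0$ for $m$ even, the reverse for $m$ odd --- is the correct classical one) then yields a nonzero nef functional vanishing on $\Delta_Q$, so $\Delta_Q$ cannot be big. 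This is sound, since $\gamma\times\delta$ pairs to $1$ with a suitable product of ruling classes and so is nonzero as a functional on $N_n(Q\times Q)$; it is essentially Lehmann--Ottem's own computation reproduced in full. What your route buys is independence from \cite{LO}'s Section~7.1 at the cost of some length; what the paper's route buys is brevity.
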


\begin{proof} Assume that the diagonal is nef and big and $X$ is not a projective space. By Theorem~\ref{them:ci}, $X$ is a quadric or an odd-dimensional complete intersection of two quadrics. If $X$ is a quadric, then the dimension should be odd (see for instance \cite[Section~7.1]{LO}). In the case, $X$ is a fake projective space. If $X$ is an odd-dimensional complete intersection of two quadrics, then the degree of the top Chern class of $X$ is zero. Since $\deg \Delta_X^2=\deg c_{\dim X}(X)=0$, this yields that $\Delta_X$ lies in the boundary of the pseudoeffective cone $\overline{{\rm Eff}}_{\dim X}(X \times X)$. Thus $\Delta_X$ is not big. Hence we see that $X$ is a projective space or an odd-dimensional quadric provided that the diagonal is nef and big. Conversely, it follows from Proposition~\ref{prop:fps} that a projective space and an odd-dimensional quadric have nef and big diagonal. 
\end{proof}

\begin{corollary}\label{cor:CPci} Let $X$ be a complete intersection of hypersurfaces. Assume that the tangent bundle $T_X$ is nef. Then $X$ is a projective space or a quadric. 
\end{corollary}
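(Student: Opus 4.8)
The plan is to bootstrap from Theorem~\ref{them:ci}, exploiting that nefness of the tangent bundle is strictly stronger than nefness of the diagonal. Since $T_X$ nef implies that $\Delta_X$ is nef (as recalled in the introduction, following \cite{LO}), Theorem~\ref{them:ci} immediately restricts $X$ to three possibilities: a projective space, a quadric, or an odd-dimensional complete intersection of two quadrics. The first two already give the desired conclusion, so everything reduces to excluding the third.

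So let $X \subset \P^{n+2}$ be a smooth complete intersection of two quadrics with $n$ odd. By adjunction $-K_X = \cO_X(n-1)$, so for $n \geq 2$ the variety $X$ is a Fano manifold, in fact a del Pezzo manifold of Picard number one, whereas for $n=1$ it is an elliptic curve; the latter has nef (trivial) tangent bundle without being a projective space or quadric, so it must be excluded by hand, either by restricting to $\dim X \geq 2$ or by noting it lies outside the Fano setting of Campana--Peternell. The essential difficulty is that I cannot recycle the diagonal here: by Remark~\ref{rem:22} every effective cycle on $X$ is nef and $\deg c_n(X)=0$, and whether $\Delta_X$ is nef is exactly the open Question~\ref{ques:22}. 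Hence the exclusion must genuinely use the positivity of $T_X$ itself, not merely that of $\Delta_X$.

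To rule out the remaining case, one route is structural: $X$ is a del Pezzo manifold of Picard number one that is not isomorphic to any rational homogeneous space, so one may invoke the classification of del Pezzo manifolds with nef tangent bundle (equivalently the del Pezzo statement Corollary~\ref{cor:CP}), whose list contains no complete intersection of two quadrics. To keep the argument self-contained and uniform in $n$, the hands-on alternative I would pursue is to produce a line $\ell \subset X$ whose normal bundle $N_{\ell/X}$ has a negative summand: then $T_X|_\ell$ is not nef, hence neither is $T_X$. This is transparent in dimension three, the del Pezzo threefold of degree four, where special lines with normal bundle $\cO(1)\oplus\cO(-1)$ are expected, but the main obstacle is making such non-nefness uniform across all odd dimensions. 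The numerical criteria of Section~\ref{sect:ci} are blind to it (they only register $\deg c_n(X)=0$), and the maximal linear subspaces $\P^m \subset X$ satisfy $\det N_{\P^m/X}=\cO(m-1)$, which is nef; so the required negativity has to be located in a finer sub-quotient of $T_X$ along a suitable rational curve rather than in any top-dimensional intersection number.
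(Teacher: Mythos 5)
Your first step coincides with the paper's: since nef $T_X$ implies nef $\Delta_X$, Theorem~\ref{them:ci} reduces everything to excluding an odd-dimensional complete intersection of two quadrics, and you correctly observe that this exclusion cannot be recycled from the diagonal (Question~\ref{ques:22} is open), so the positivity of $T_X$ itself must be used. However, neither of your two proposed routes closes this gap. Route one is circular inside this paper: the proof of Corollary~\ref{cor:CP} handles the odd-dimensional $(2,2)$ case precisely by citing Corollary~\ref{cor:CPci}, so you cannot invoke it here; and the paper explicitly notes there is no independent literature covering all del Pezzo cases. Route two is worse than incomplete --- it is based on a false expectation. On a smooth del Pezzo threefold of degree $4$ the Fano scheme of lines is an abelian surface, in particular smooth, so $h^1(\ell, N_{\ell/X})=0$ for \emph{every} line $\ell$; hence every line has normal bundle $\cO\oplus\cO$ and $T_X|_\ell\cong\cO(2)\oplus\cO\oplus\cO$ is nef. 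There are no lines with a negative summand, so non-nefness of $T_X$ cannot be detected on lines at all in dimension three, let alone ``uniformly across all odd dimensions.''

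The paper's actual argument (following Pandharipande) is global rather than curve-by-curve, and this is exactly what your approach is missing. Assume $T_X$ is nef and let $p: U \to M$, $q: U \to X$ be the universal family and evaluation map of the family of lines. Nefness of $T_X$ forces $N_{\ell/X}$ to be nef for every line (it is a quotient of $T_X|_\ell$), which kills the obstruction $H^1(\ell, N_{\ell/X}(-1))$ and makes $q$ smooth; the fiber $F$ of $q$ is then a smooth $2(n-1)$-dimensional complete intersection of two quadrics. Comparing the two fibration structures of $U$ gives the Poincar\'e polynomial identity $p_M(t)\,p_{\P^1}(t)=p_X(t)\,p_F(t)$, and this is contradicted by the odd cohomology of $X$ (namely $b_{2n+1}(X)=2n+2\neq 0$, while $p_{\P^1}$ and $p_F$ have only even terms). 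One small point in your favor: your observation about dimension one is legitimate --- an elliptic quartic curve in $\P^3$ is a $(2,2)$ complete intersection with nef (trivial) tangent bundle, so the statement implicitly requires $\dim X\geq 3$, as does the paper's proof (which needs $X$ to be covered by lines). But noting this edge case does not repair the main exclusion step, which your proposal leaves unproven.
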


\begin{proof} Assume that $X$ is neither a projective space nor a quadric. By Theorem~\ref{them:ci}, $X$ is an odd-dimensional complete intersection of two quadrics. We claim that the tangent bundle is not nef in this case. Here we give a sketch of the proof based on the same idea as in \cite{Pan}.

Let $X$ be a $(2n+1)$-dimensional complete intersection of two quadrics. Assume that the tangent bundle is nef. Since $X$ is covered by lines, we may take the family of lines $M$ on $X$ and its universal family $U$. We denote by $p: U \to M$ the universal morphism and by $q: U \to X$ the evaluation morphism with a fiber $F$. By construction, $p$ is a smooth morphism whose fibers are $\P^1$. On the other hand, the nefness of $T_X$ implies the smoothness of $q$. Then it is straightforward to check that $F$ is a $2(n-1)$-dimensional complete intersection of two quadrics. Then we have $$p_M(t)p_{\P^1}(t)=p_X(t)p_F(t),$$ where $p_{Z}(t):=\sum_ib_i(Z)(-t)^i $ is the Poincar\'e polynomial of a variety $Z$. However this contradicts to the fact that $p_{\P^1}(t)=1+t^2, p_X(t)=\sum_{i=0}^{2n+1}t^{2i}-2(n+1)t$ and $p_F(t)=\sum_{i=0}^{2(n-1)}t^{2i}$. 
\end{proof}

\section{Weighted hypersurfaces}

To study del Pezzo varieties of degree one and two in the next section, we compute the degree of the top Chern class of weighted hypersurfaces.  
All results of this section are classically well known, but we include proofs for the reader's convenience. 

For a vector of positive integers $\a=(a_0, a_1, \ldots, a_m)$, let us denote by $\P(\a)$ the weighted projective space of type $\a$. 
Let $X \subset \P(\a)$ be a smooth weighted hypersurface of degree $d$. Assume that $X$ is contained in the smooth locus of $\P(\a)$ and $m \geq 4$. By \cite[Theorem~5.32]{KKA}, $\cO_X(1)$ generates ${\rm Pic}(X)$: ${\rm Pic}(X)=\Z[\cO_X(1)]$. We denote by $h \in A^1(X)$ the class corresponding to $\cO_X(1)$. 

\begin{proposition}[{\cite[Theorem~12.1]{BC}}]\label{prop:genel-Euler} There is an exact sequence of sheaves on $\P(\a)$,
$$
0\to \Omega_{\P(\a)} \to \bigoplus_{i=0}^m\cO_{\P(\a)}(-a_i) \to \cO_{\P(\a)} \to 0.
$$This exact sequence is called {\it the generalized Euler sequence of $\P(\a)$}.
\end{proposition}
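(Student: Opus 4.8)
The plan is to realize $\P(\a)$ as a torus quotient of an open subset of affine space and to descend the relative Euler sequence of that quotient. Write $S=\C[x_0,\dots,x_m]$ for the homogeneous coordinate ring, graded by $\deg x_i=a_i$, so that $\P(\a)=\Proj S$. Let $U=\Spec S\setminus V(\fm)$ be the punctured affine cone, where $\fm=(x_0,\dots,x_m)$, and let $\pi\colon U\to\P(\a)$ be the canonical projection, which is the geometric quotient of $U$ by the $\G_m$-action $\lambda\cdot(x_0,\dots,x_m)=(\lambda^{a_0}x_0,\dots,\lambda^{a_m}x_m)$. This action is generated by the Euler vector field $E=\sum_i a_i x_i\,\partial/\partial x_i$, which is $\G_m$-invariant and nowhere vanishing on $U$.

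First I would write down the \emph{graded} Euler sequence on the cone. The module of K\"ahler differentials of $S$ is free, $\Omega_{S/\C}=\bigoplus_i S\,dx_i$, with generator $dx_i$ sitting in degree $a_i$; hence as a graded $S$-module $\Omega_{S/\C}\cong\bigoplus_i S(-a_i)$. Contraction with the Euler derivation gives a degree-zero homomorphism $\iota_E\colon\bigoplus_i S(-a_i)\to S$, $dx_i\mapsto a_i x_i$, whose image is exactly $\fm$ (the $a_i$ being invertible scalars). Setting $K:=\ker\iota_E$, I obtain a short exact sequence of graded $S$-modules
\[
0\to K\to\bigoplus_i S(-a_i)\xrightarrow{\ \iota_E\ }\fm\to0 .
\]

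Next I would sheafify over $\Proj S$. Since $\widetilde{(-)}$ is exact and $\widetilde{S(-a_i)}=\cO_{\P(\a)}(-a_i)$, while $\widetilde{\fm}=\cO_{\P(\a)}$ (as $\fm$ and $S$ agree in all large degrees), this yields the exact sequence
\[
0\to\widetilde{K}\to\bigoplus_i\cO_{\P(\a)}(-a_i)\to\cO_{\P(\a)}\to0 ,
\]
the last map being $(s_i)\mapsto\sum_i a_i x_i s_i$, which is surjective because the sections $x_i$ of $\cO(a_i)$ have no common zero on $\Proj S$. It remains to identify $\widetilde{K}$ with $\Omega_{\P(\a)}$, and this is where the quotient picture is used: over the locus where $\G_m$ acts freely, $\pi$ is smooth and the relative cotangent sequence $0\to\pi^*\Omega_{\P(\a)}\to\Omega_U\to\Omega_{U/\P(\a)}\to0$ is exact, with $\Omega_{U/\P(\a)}$ trivialized by the Euler field. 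Taking $\G_m$-invariants (descent along the torsor) recovers the sheafified sequence above and identifies $\widetilde{K}$ as the $\G_m$-invariant, $\pi$-horizontal forms, i.e.\ as $\Omega_{\P(\a)}$.

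The main obstacle is precisely this last identification at the singular points of $\P(\a)$. The $\G_m$-action on $U$ is not free: along the coordinate loci $\{x_i=0\}$ it acquires finite cyclic stabilizers whenever the weights share common factors, so $\pi$ fails to be smooth there and $\P(\a)$ may be singular. I expect to handle this either by restricting to the smooth locus of $\P(\a)$ — which suffices for the application to the smooth hypersurface $X$, since $X$ is assumed to lie in the smooth locus — or by a direct local computation on the standard affine charts $\{x_i\neq0\}$, verifying that $\ker\iota_E$ sheafifies to the K\"ahler differential sheaf there. The routine part is the weight bookkeeping: since $dx_i$ has degree $a_i$, one has $S\,dx_i\cong S(-a_i)$, which is exactly what produces the twists $\cO_{\P(\a)}(-a_i)$ in the middle term.
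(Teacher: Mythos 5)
The paper offers no proof of this proposition at all: it is quoted verbatim from the reference \cite[Theorem~12.1]{BC}, so there is nothing internal to compare your argument against. That said, your argument is the standard derivation and is essentially correct: the graded Euler sequence $0\to K\to\bigoplus_i S(-a_i)\xrightarrow{\iota_E}\fm\to 0$ with $\iota_E(dx_i)=a_ix_i$ (image equal to $\fm$ because the $a_i$ are nonzero in characteristic zero), exactness of $\widetilde{(-)}$, and $\widetilde{\fm}=\widetilde{S}=\cO_{\P(\a)}$ give the middle and right-hand terms, and the only genuine content is the identification $\widetilde{K}\cong\Omega_{\P(\a)}$. You correctly isolate that as the delicate point. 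Two remarks to tighten it. First, at singular points of $\P(\a)$ the sheaf $\Omega_{\P(\a)}$ appearing in the generalized Euler sequence is, in the cited sources, essentially \emph{defined} to be this kernel (equivalently, the sheaf of $\G_m$-invariant horizontal forms, or the Zariski differentials $j_*\Omega_{\P(\a)_{\mathrm{sm}}}$); it agrees with the K\"ahler differential sheaf only on the smooth locus, so your "obstacle" dissolves once the convention is fixed, and in any case the paper only ever restricts the sequence to a smooth hypersurface $X$ contained in the smooth locus, where your descent argument along the free part of the $\G_m$-torsor is valid. Second, be slightly careful with the claim that the non-free locus of the action coincides with the singular locus: this holds for well-formed weights (as in the paper's applications $\a=(3,2,1,\ldots,1)$ and $(2,1,\ldots,1)$), but not in general (e.g.\ all weights sharing a common factor), so your fallback of checking on the affine charts $\{x_i\neq 0\}$ is the safer formulation. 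With either fix the proof is complete and supplies an argument the paper itself omits.
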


\begin{proposition}\label{prop:weighted} Let $X \subset \P(\a)$ be a smooth weighted hypersurface of degree $d$. Assume that $X$ is contained in the smooth locus of $\P(\a)$ and $m \geq 4$. Then the degree of the top Chern class is given by 
$$\deg c_{m-1}(X)=\biggl\{ \sum_{i=0}^{m-1}e_{m-1-i}(a_0, \ldots,a_m)(-d)^i\biggr\} h^{m-1}.
$$
\end{proposition}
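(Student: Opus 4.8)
The plan is to compute the total Chern class of $T_X$ directly from the two fundamental short exact sequences at our disposal: the generalized Euler sequence of Proposition~\ref{prop:genel-Euler}, and the tangent--normal sequence of the hypersurface $X \subset \P(\a)$. The whole argument is an exercise in the multiplicativity of the Chern polynomial on short exact sequences, together with a single power-series expansion, so the top Chern class will fall out by reading off one coefficient.

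First I would extract the Chern polynomial of $\P(\a)$. Writing $H := c_1(\cO_{\P(\a)}(1))$ and using that $c_t$ is multiplicative, the generalized Euler sequence
$$0\to \Omega_{\P(\a)} \to \bigoplus_{i=0}^m\cO_{\P(\a)}(-a_i) \to \cO_{\P(\a)} \to 0$$
gives $c_t(\Omega_{\P(\a)}) = \prod_{i=0}^m(1 - a_i H t)$, since the trivial quotient contributes the factor $c_t(\cO_{\P(\a)})=1$ and $c_t(\cO_{\P(\a)}(-a_i)) = 1 - a_i H t$. Dualizing to the tangent bundle $T_{\P(\a)} = \Omega_{\P(\a)}^{\vee}$ flips the signs of the Chern roots, so $c_t(T_{\P(\a)}) = \prod_{i=0}^m(1 + a_i H t)$.

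Next I would restrict to $X$ and bring in the normal bundle. Because $X$ lies in the smooth locus of $\P(\a)$, it is a smooth Cartier divisor with $N_{X/\P(\a)} \cong \cO_X(d)$, and the tangent--normal sequence
$$0 \to T_X \to T_{\P(\a)}|_X \to \cO_X(d) \to 0$$
is an honest sequence of vector bundles. Setting $h := H|_X$ and applying multiplicativity once more yields
$$c_t(T_X) = \frac{\prod_{i=0}^m(1 + a_i h t)}{1 + d h t}.$$

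Finally I would extract the coefficient of $t^{m-1}$, which is the top Chern class since $\dim X = m-1$. Expanding the numerator as $\sum_{k} e_k(a_0,\dots,a_m)(ht)^k$, where $e_k$ denotes the $k$-th elementary symmetric polynomial, and the denominator as the geometric series $\sum_{j \geq 0}(-d)^j (ht)^j$ --- valid in $A(X)$ since $h^m = 0$ --- and then collecting the terms with $k + j = m-1$ via the substitution $k = m-1-i$, $j = i$, gives
$$c_{m-1}(X) = \biggl\{\sum_{i=0}^{m-1} e_{m-1-i}(a_0,\dots,a_m)(-d)^i\biggr\} h^{m-1},$$
which is the asserted identity. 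I do not expect a serious obstacle: the only delicate points are that $N_{X/\P(\a)} \cong \cO_X(d)$ and that both sequences are genuine vector-bundle sequences in the weighted setting, and both are guaranteed precisely by the standing hypotheses that $X$ avoids the singularities of $\P(\a)$ and $m \geq 4$ (the latter also ensuring $\Pic(X) = \Z[\cO_X(1)]$, so that $h$ generates the relevant part of $A^{\bullet}(X)$).
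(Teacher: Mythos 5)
Your proof is correct and follows essentially the same route as the paper's: both derive $c_t$ of the ambient (co)tangent sheaf from the generalized Euler sequence, combine it with the adjunction sequence for the hypersurface, and read off the coefficient of $t^{m-1}$ after expanding $(1+dht)^{-1}$ as a (finite, since $h$ is nilpotent) geometric series. The only cosmetic difference is that the paper works with the conormal sequence $0 \to \cO_X(-d) \to \Omega_{\P(\a)}|_X \to \Omega_X \to 0$ while you dualize everything and use the tangent--normal sequence, which changes nothing in substance.
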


\begin{proof}
From the generalized Euler sequence of $\P(\a)$, 
$$
c_t(\Omega_{\P(\a)}|_X)=(1-a_0ht)(1-a_1ht)\cdots (1-a_mht).
$$
By the exact sequence 
$$
0 \to \cO_X(-d) \to \Omega_{\P(\a)}|_X \to \Omega_X \to 0,
$$
we obtain
\begin{eqnarray}
c_t(T_X)
&=&\dfrac{(1+a_0ht)(1+a_1ht)\cdots (1+a_mht)}{1+dht}\nonumber \\
&=&(1+a_0ht)(1+a_1ht)\cdots (1+a_mht) \sum_{j=0}^{m-1} (-dht)^j \nonumber \\
&=&\biggl\{ \sum_{i=0}^{m-1} e_i(a_0, \ldots, a_m)(ht)^i \biggr\} \biggl\{\sum_{j=0}^{m-1} (-dht)^j\biggr\}, \nonumber 
\end{eqnarray}
where $e_i(x_0, \ldots, x_m)$ is the $i$-th elementary symmetric polynomial in $(m+1)$ variables $x_0, \ldots, x_m$:
$$
e_i(x_0, \ldots, x_m)=\sum_{0 \leq j_1< j_2 \ldots<j_i\leq m} x_{j_1}x_{j_2}\cdots x_{j_i}.
$$
As a consequence, our assertion holds.
\end{proof}

\section{Del Pezzo varieties} 

A {\it smooth Fano variety} $X$ is a smooth projective variety with ample anticanonical divisor $-K_X$. For a smooth Fano variety $X$ of dimension $n$, we denote by $i_X$ the {\it Fano index} of $X$, the largest integer by which $-K_X$ is divisible in ${\rm Pic}(X)$. By virtue of \cite{KO}, if the Fano index $i_X$ is at least $n$, then $X$ is isomorphic to a projective space $\P^n$ or a quadric hypersurface $Q^n \subset \P^{n+1}$. Since $\P^n$ and $Q^n$ are homogeneous, in these cases the diagonal is nef. 

Smooth Fano varieties of dimension $n\geq 3$ and index $n-1$ are called {\it smooth del Pezzo varieties}. In this section, we shall classify smooth del Pezzo varieties with nef diagonal. Let us recall the classification of smooth del Pezzo varieties due to T. Fujita and V. A. Iskovskikh: 

\begin{theorem}[{\cite[Theorem~8.11]{Fuj}, \cite{Fuj1, Fuj2, Fuj3}}, \cite{Isk1, Isk2, Isk3}]\label{thm:delpezzo} Let $X$ be a smooth del Pezzo variety of dimension $n \geq 3$ and degree $d=H^n$, where $-K_X=(n-1)H \in {\rm Pic}(X)$. Then $X$ is one of the following.
\begin{enumerate} 
\item If $d=1$, then $X$ is a weighted hypersurface of degree $6$ in the weighted projective space $\P(3, 2, 1, \ldots, 1)$.
\item If $d=2$, then $X$ is a weighted hypersurface of degree $4$ in the weighted projective space $\P(2, 1, \ldots, 1)$. In this case, $X$ is a double cover branched along a quartic in $\P^n$. 
\item If $d=3$, then $X \subset \P^{n+1}$ is a cubic hypersurface.
\item If $d=4$, then $X \subset \P^{n+2}$ is a complete intersection of type $(2,2)$.
\item If $d=5$, then $X$ is a linear section of the Grassmannian $G(2, \C^5) \subset \P^9$ embedded via the Pl\"ucker embedding.
\item If $d=6$, then $X$ is either $\P^1\times \P^1 \times \P^1$, $\P^2 \times \P^2$ or $\P(T_{\P^2})$.
\item If $d=7$, then $X$ is the blow-up of $\P^3$ at a point.
\end{enumerate}
\end{theorem}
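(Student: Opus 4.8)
This is the classical Fujita--Iskovskikh classification of del Pezzo manifolds, which the paper quotes rather than reproves; the plan below indicates the route one would take. The backbone is adjunction together with a degree count, the governing invariant being $d=H^n$. First I would establish a \emph{ladder} $X=X_n\supset X_{n-1}\supset\cdots\supset X_1$, in which each $X_{k-1}$ is a smooth general member of the restriction of $|H|$ to $X_k$. By adjunction $K_{X_{k-1}}=(K_{X_k}+H)|_{X_{k-1}}$, so one checks inductively that each $X_k$ is again a del Pezzo manifold, of dimension $k$ and of the \emph{same} degree $d$, with $X_1$ an elliptic curve embedded with degree $d$ by $H|_{X_1}$. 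Running Kodaira vanishing down the ladder pins down the cohomology: one finds $h^0(X,H)=n+d-1$ and that $X$ is projectively normal, so that when $H$ is very ample $X$ sits in $\P^{n+d-2}$ as an $n$-fold of degree $d$, i.e.\ of codimension $d-2$.

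The classification then splits according to the Picard number. When $\rho(X)=1$, I would argue by $d$ using the codimension count: $d=3$ gives a cubic hypersurface, and $d=4$ a codimension-two degree-four variety which projective normality forces to be a $(2,2)$ complete intersection. The value $d=5$ is the first genuinely hard one: $X\subset\P^{n+3}$ has codimension $3$ and degree $5$, and one must recognize it as a linear section of $G(2,\C^5)$ in its Pl\"ucker embedding, which is not a numerical matter but requires analysing the homogeneous ideal. The low-degree cases $d=1,2$ are those where $H$ is not very ample: for $d=2$ the system $|H|$ defines a double cover of $\P^n$ branched along a quartic, while for $d=1$ it has a base point, so one passes to $|2H|$ and arrives at the sextic in $\P(3,2,1,\dots,1)$. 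When $\rho(X)\geq 2$ I would instead invoke Mori theory: the extremal contractions of a del Pezzo manifold are very restricted, and running through them produces exactly $\P^1\times\P^1\times\P^1$, $\P^2\times\P^2$, $\P(T_{\P^2})$ of degree $6$ and the blow-up of $\P^3$ at a point of degree $7$, occurring only in the low dimensions listed.

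The main obstacle is twofold. First, making the ladder rigorous rests on the base-point-freeness and very-ampleness results of Fujita's $\Delta$-genus theory, and in particular on the separate treatment of $d=1,2$, where $H$ fails to be very ample and the correct model lives on a weighted projective space. Second, and deepest, is the degree-five identification with a Grassmannian section: one clean route is the Buchsbaum--Eisenbud structure theorem for codimension-three arithmetically Gorenstein ideals, which forces the equations of $X$ to be the $4\times 4$ Pfaffians of a skew $5\times 5$ matrix, i.e.\ precisely the Pl\"ucker relations of $G(2,\C^5)$. Since all of this is classical, in the present paper the statement is simply cited and used as input to the subsequent analysis of nef diagonals.
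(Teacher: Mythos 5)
The paper does not prove this theorem: it is stated as a quoted classical result of Fujita and Iskovskikh, exactly as you observe, so there is no in-paper argument to compare against. Your outline (the adjunction ladder, $h^0(X,H)=n+d-1$ and projective normality, the weighted-hypersurface models when $H$ fails to be very ample for $d=1,2$, the Pfaffian/Grassmannian identification for $d=5$, and Mori theory for Picard number at least $2$) is a faithful sketch of the standard classification argument and is consistent with how the paper uses the statement.
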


By using the above classification result, we shall prove the following.

\begin{theorem}\label{thm:dPnef} Let $X$ be a smooth del Pezzo variety of dimension $n \geq 3$. If $\Delta_X$ is nef, then $X$ is one of the following:
\begin{enumerate}
\item an odd-dimensional complete intersection of two quadrics, 
\item the Grassmannian $G(2, \C^5)$, 
\item a $3$-dimensional linear section of the Grassmannian $G(2, \C^5) \subset \P^9$ embedded via the Pl\"ucker embedding,
\item $\P^1\times \P^1 \times \P^1$,
\item $\P^2 \times \P^2$ or
\item $\P(T_{\P^2})$.
\end{enumerate}
In the cases of $(2), (4), (5), (6)$, $X$ is homogeneous. In the case of $(3)$, $X$ is a fake projective space. 
\end{theorem}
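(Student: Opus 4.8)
The plan is to go through the Fujita–Iskovskikh classification (Theorem~\ref{thm:delpezzo}) case by case, using the degree $d=H^n$ as the organizing parameter, and in each case either rule out nefness of $\Delta_X$ or confirm it lands in the allowed list. The main tool throughout will be Proposition~\ref{prop:diag^2}, which reduces the question $\deg\Delta_X^2\geq 0$ to the sign of $\deg c_n(X)$, together with Proposition~\ref{prop:pseffcone} (pseudoeffective $\Rightarrow$ nef) for the cases where the diagonal survives. I would first dispose of the cases already handled elsewhere in the paper: degrees $d=3$ and $d=4$ are complete intersections (cubic hypersurfaces and type $(2,2)$), so Theorem~\ref{them:ci} and Lemma~\ref{lem:except} immediately show that the only survivor is the odd-dimensional complete intersection of two quadrics, giving case~(1).

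Next I would treat the \emph{low-degree} cases $d=1$ and $d=2$, which are the genuinely new content and where I expect the main obstacle to lie. Here $X$ is a smooth weighted hypersurface in $\P(3,2,1,\dots,1)$ of degree $6$ (for $d=1$) or in $\P(2,1,\dots,1)$ of degree $4$ (for $d=2$), and these are exactly the varieties for which the weighted-hypersurface Chern-class computation of Section~3 was set up. I would apply Proposition~\ref{prop:weighted} to compute $\deg c_n(X)$ explicitly via the elementary symmetric polynomials $e_i$ of the weights, and show that the resulting integer is negative (or, combined with the finite-projection bound, falls outside the nef range). For $d=2$, where $X$ is a double cover of $\P^n$ branched along a quartic, I would instead invoke Proposition~\ref{prop:LO}: the covering map $f\colon X\to\P^n$ is finite of degree $2$, so it suffices to verify $\deg c_n(X)>2(n+1)$, which again follows from the weighted Chern-class formula. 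The delicate point is that $\deg c_n(X)$ depends on the parity of $n$ and must be checked to have the wrong sign (or exceed the Lehmann–Ottem bound) \emph{uniformly} in $n$; I would organize this as a sign analysis of the alternating sum $\sum_i e_{m-1-i}(\a)(-d)^i$, the analogue of the inductive arguments in Propositions~\ref{prop:ci} and~\ref{prop:quad}.

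For the remaining cases I would argue as follows. Degree $d=5$: by Theorem~\ref{thm:delpezzo}, $X$ is a linear section of $G(2,\C^5)\subset\P^9$; the full Grassmannian $G(2,\C^5)$ (case~(2)) and the $3$-dimensional linear section (case~(3)) are kept, the former because it is homogeneous and the latter because it is a fake projective space (Proposition~\ref{prop:fps}), while the intermediate-dimensional sections must be excluded by an Euler-characteristic or extremal-ray computation. Degree $d=6$: the three varieties $\P^1\times\P^1\times\P^1$, $\P^2\times\P^2$, and $\P(T_{\P^2})$ are all homogeneous, hence have nef tangent bundle and therefore nef diagonal, landing in cases~(4)–(6). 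Degree $d=7$: $X$ is the blow-up of $\P^3$ at a point, which carries a birational extremal contraction, so Proposition~\ref{prop:bir} rules out nef diagonal immediately. Finally I would record the homogeneity claims for cases~(2),(4),(5),(6) and the fake-projective-space claim for case~(3), both of which are immediate from the descriptions.

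I expect the \textbf{hard part} to be the explicit sign control of $\deg c_n(X)$ for the weighted hypersurfaces of degrees one and two: unlike the ordinary complete-intersection formula, the weights $(3,2,1,\dots,1)$ and $(2,1,\dots,1)$ break the symmetry, so the clean recursion of Proposition~\ref{cor:formula} is unavailable and one must instead extract the sign of a specific alternating combination of elementary symmetric polynomials, tracking the dependence on $\dim X$ and its parity carefully enough to conclude non-nefness in every dimension $n\geq 3$.
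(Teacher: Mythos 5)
Your plan matches the paper's proof for most degrees: $d=3,4$ via Theorem~\ref{them:ci}, $d=6$ by homogeneity, $d=7$ by Proposition~\ref{prop:bir}, and $d=1,2$ by exactly the weighted-hypersurface computation you describe (the paper's Propositions~\ref{prop:deg1} and~\ref{prop:deg2} carry out the parity-dependent sign analysis of $\sum e_{n-i}(\a)(-d)^i$ and, in the even-dimensional cases, combine it with Proposition~\ref{prop:LO}, using that $X$ covers $\P^n$ with degree $2^n$ when $d=1$ and degree $2$ when $d=2$). Incidentally, what you flag as the hard part is not where the real difficulty lies.

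The genuine gap is in degree $5$, dimensions $4$ and $5$. You propose to exclude these by ``an Euler-characteristic or extremal-ray computation,'' but neither tool can work. Both varieties have Picard number one (Lefschetz), so they admit no birational extremal contraction and Proposition~\ref{prop:bir} is vacuous. Their topological Euler characteristics are positive ($6$ for the $4$-fold, $8$ for the $5$-fold), so Proposition~\ref{prop:diag^2} gives $\deg\Delta_X^2>0$; and since the projection to $\P^n$ has degree $5$, the Lehmann--Ottem bound of Proposition~\ref{prop:LO} requires $\deg c_n(X)>5(n+1)$, i.e.\ $>25$ resp.\ $>30$, which fails badly. So every numerical criterion in your toolkit is silent, and one must instead exhibit an explicit \emph{effective cycle that is not nef} and invoke Proposition~\ref{prop:pseffcone}. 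For $n=4$ the paper writes $X$ as an explicit codimension-$2$ linear section of $G(2,\C^5)$ containing two planes $\Pi$ (from $\sigma_{3,1}$) and $\Lambda$ (from $\sigma_{2,2}$) with $\deg\Pi\cdot\Lambda=-1$. For $n=5$ the paper identifies $X$ with the odd symplectic Grassmannian $G_{\omega}(2,\C^5)$, shows its effective cones are generated by Schubert classes (via Mihai's and Pech's work and the result of Fulton--MacPherson--Sottile--Sturmfels), and uses Pech's Pieri formula to get $\deg(\tau_{(3,-1)}\cdot\tau_{(2,1)})=-1$; this occupies the paper's entire Section~5 and is the heart of the theorem. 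Without some replacement for these negative-intersection cycles, your proof cannot be completed.
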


\subsection{Cases of degree one and two}

\begin{proposition}\label{prop:deg1} Let $X$ be a smooth del Pezzo variety of degree one. Then $\Delta_X$ is not nef.
\end{proposition}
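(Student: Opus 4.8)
The plan is to combine the classification of del Pezzo varieties with a top Chern class computation for a weighted sextic, and then to split into two cases according to the parity of $n=\dim X$. By Theorem~\ref{thm:delpezzo}(1), $X$ is a smooth weighted hypersurface of degree $6$ in $\P(\a)$ with $\a=(3,2,1,\ldots,1)$ ($n$ copies of $1$), so the ambient space has dimension $m=n+1\geq 4$ and $X$ lies in its smooth locus; thus Proposition~\ref{prop:weighted} applies. Writing $h$ for the class of $\cO_X(1)$, so that $H=h$ and $\deg h^n=H^n=1$, the Chern polynomial is
$$c_t(T_X)=\frac{(1+3ht)(1+2ht)(1+ht)^n}{1+6ht}.$$
I would extract $\deg c_n(X)$ as the coefficient of $(ht)^n$: performing the division $\frac{(1+3u)(1+2u)}{1+6u}=u+\tfrac23+\tfrac13\cdot\frac{1}{1+6u}$ and using the binomial identity $[u^n]\frac{(1+u)^n}{1+6u}=\sum_{k=0}^n\binom{n}{k}(-6)^k=(1-6)^n=(-5)^n$, I obtain the closed form
$$\deg c_n(X)=n+\frac23+\frac13(-5)^n=\frac{(-5)^n+3n+2}{3}.$$

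If $n$ is odd, then $(-5)^n=-5^n$ and, since $5^n>3n+2$ for $n\geq 3$, this quantity is negative. By Proposition~\ref{prop:diag^2} we have $\deg\Delta_X^2=\deg c_n(X)<0$; as $\Delta_X$ is an effective cycle, nefness of $\Delta_X$ would force $\deg\Delta_X^2\geq 0$, a contradiction. Hence $\Delta_X$ is not nef in the odd case.

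If $n$ is even, then $\deg c_n(X)=\tfrac13(5^n+3n+2)>0$, so the self-intersection test is useless and I would instead invoke Proposition~\ref{prop:LO}. The crucial point is to produce a finite morphism $X\to\P^n$ of small degree. I claim $|2H|$ is base-point-free: in coordinates $w,v,y_1,\ldots,y_n$ of weights $3,2,1,\ldots,1$, the sections $v$ and $y_iy_j$ of $\cO_X(2)$ have common zero locus $\{v=0,\ y_1=\cdots=y_n=0\}$, which is the single point $[1:0:\cdots:0]$; this point does not lie on $X$ because the defining sextic contains $w^2$ with nonzero coefficient. Since $\Pic(X)=\Z[\cO_X(1)]$ has rank one, the induced morphism is finite onto its image, and a general linear projection yields a finite morphism $\pi:X\to\P^n$ with $\pi^{\ast}\cO_{\P^n}(1)=2H$, hence of degree $(2H)^n=2^n$. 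It then remains to verify the numerical inequality $\deg c_n(X)>(n+1)2^n$, i.e.\ $5^n>3(n+1)2^n$, which holds for even $n\geq 4$ (check $n=4$ and induct, comparing growth factors $5$ versus $2\cdot\frac{n+2}{n+1}<4$). Proposition~\ref{prop:LO} then shows $\Delta_X$ is not nef.

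The Chern-class computation is routine, collapsing to a clean formula via the identity $(1-6)^n=(-5)^n$. I expect the main obstacle to be the even case: exhibiting a finite morphism to $\P^n$ of \emph{controlled} degree. The delicate points there are the base-point-freeness of $|2H|$—which requires locating the base locus at the weighted-projective singular point $[1:0:\cdots:0]$ and checking it avoids $X$—together with confirming that the resulting projection has degree exactly $2^n$, so that the inequality against $(n+1)2^n$ can indeed be met.
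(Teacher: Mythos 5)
Your strategy coincides with the paper's: Proposition~\ref{prop:weighted} yields the same closed formula $\deg c_n(X)=\tfrac{1}{3}\bigl(3n+2+(-5)^n\bigr)$ (your partial-fraction derivation is a slicker route to the identical number), the odd case is settled by $\deg \Delta_X^2=\deg c_n(X)<0$ via Proposition~\ref{prop:diag^2}, and the even case by feeding a finite degree-$2^n$ morphism $X\to\P^n$ into Proposition~\ref{prop:LO} with the same inequality $\tfrac{1}{3}(5^n+3n+2)>(n+1)2^n$; your map given by $|2H|$ followed by a generic linear projection is exactly the paper's double cover of the Veronese cone followed by a projection. All the computations you carry out are correct.

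The one genuine gap is the unproved claim that $X$ lies in the smooth locus of $\P:=\P(3,2,1,\ldots,1)$. This is a hypothesis of Proposition~\ref{prop:weighted}, so both of your cases rest on it, and it is also what your base-point-freeness argument needs. Your justification --- that ``the defining sextic contains $w^2$ with nonzero coefficient'' --- is circular: nonvanishing of that coefficient is precisely the statement that $X$ misses the point $[1:0:\cdots:0]$, and deducing it from smoothness of $X$ is not a routine Jacobian computation, because at that point the ambient space is itself singular (a $\mu_3$-quotient point); a hypersurface through a quotient singularity whose equation has no linear part there can a priori still be a smooth variety (quotients of singular varieties by finite groups can be smooth). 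Moreover $\P_{\rm sing}$ consists of \emph{two} points, $[1:0:\cdots:0]$ and $[0:1:0:\cdots:0]$, and you never address the second one, although avoiding it (equivalently, nonvanishing of the $v^3$ coefficient) is equally necessary for Proposition~\ref{prop:weighted}. The paper closes this gap by quoting \cite{DD} to identify $\P_{\rm sing}$ and Dimca's general-position results \cite{Dim}: since $\P_{\rm sing}$ is finite and $\dim X\geq 3$, the variety $X$ is in general position relative to $\P_{\rm sing}$, whence $\Sing(X)=X\cap\P_{\rm sing}$, and smoothness of $X$ forces $X\cap\P_{\rm sing}=\emptyset$. Inserting that argument (after which both coefficients are indeed nonzero) makes your proof complete.
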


\begin{proof} By Theorem~\ref{thm:delpezzo}, $X$ is a weighted hypersurface of degree $6$ in the weighted projective space $\P:=\P(3, 2, 1, \ldots, 1)$. Applying \cite[Proposition~7]{DD}, the singular locus $\P_{\rm sing}$ of $\P$ consists of two points:
$$\P_{\rm sing}=\{(1:0:\ldots: 0), (0:1:0:\ldots: 0)\}. $$ 
Since $\dim X \geq 3$, we have $$\codim_X(X \cap \P_{\rm sing}) \geq 3.$$ Hence $X$ is in general position relative to $\P_{\rm sing}$ in the sense of A. Dimca \cite[Definition~1]{Dim}. Then \cite[Proposition~8]{Dim} tells us that the singular locus of $X$ coincides with $X \cap \P_{\rm sing}$. Since $X$ is smooth by definition, $X$ is contained in the smooth locus of $\P$. 

Moreover $X$ is a double cover of the Veronese cone. This yields that $X$ is a covering space of $\P^n$ of degree $2^n$. Applying Proposition~\ref{prop:weighted}, we obtain
$$\deg c_n(X) = \sum_{i=0}^ne_{n-i}(3,2,1^n)(-6)^i.
$$
Here $e_{n-i}(3,2,1^n)$ means $e_{n-i}(3,2,\underbrace{1, \ldots, 1}_{n{\rm -times}})$. In the following, we use similar notation. 
Remark that
\begin{eqnarray}
 &&e_k(3,2,1^n) \nonumber \\
&=&6e_{k-2}(1^n)+3e_{k-1}(1^n)+2e_{k-1}(1^n)+e_k(1^n)\nonumber \\
&=&6\binom{n}{k-2}+5\binom{n}{k-1}+\binom{n}{k}. \nonumber 
\end{eqnarray}
Thus we have 
\begin{eqnarray}
 \deg c_n(X)&=&\sum_{i=0}^n \biggl\{ 6\binom{n}{i+2}+5\binom{n}{i+1}+\binom{n}{i} \biggr\}(-6)^i \nonumber \\
  &=&\dfrac{1}{6}\sum_{i=0}^{n-2} \binom{n}{i+2}(-6)^{i+2}-\dfrac{5}{6}\sum_{i=0}^{n-1}\binom{n}{i+1}(-6)^{i+1}+\sum_{i=0}^{n} \binom{n}{i}(-6)^i \nonumber \\
&=&\dfrac{1}{6}\biggl\{ (-5)^n+6n-1\biggr\} -\dfrac{5}{6}\biggl\{ (-5)^n-1\biggr\} +(-5)^n \nonumber \\
&=&\dfrac{1}{3}\biggl\{ 3n+2+(-5)^n\biggr\}.  \nonumber 
\end{eqnarray}
If $n$ is odd, then $\deg c_n(X)$ is negative. Thus $\Delta_X$ is not nef. If $n$ is even, then it is straightforward to show the following:
$$\dfrac{1}{3}\biggl\{ 3n+2+(-5)^n\biggr\}>2^n(n+1)~\text{if}~n \geq 4.
$$ 
Hence it follows from Proposition~\ref{prop:LO} that $\Delta_X$ is not nef.
\end{proof}

\begin{proposition}\label{prop:deg2} Let $X$ be a smooth del Pezzo variety of degree two. Then $\Delta_X$ is not nef.
\end{proposition}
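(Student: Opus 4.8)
The plan is to follow the template of the proof of Proposition~\ref{prop:deg1} almost verbatim, adjusting the weighted projective space and the degrees. By Theorem~\ref{thm:delpezzo}(2), $X$ is a smooth weighted hypersurface of degree $4$ in $\P:=\P(2,1,\ldots,1)$ (one weight equal to $2$ and $n+1$ weights equal to $1$, so that $\dim X=n$), and it is realized as a double cover of $\P^n$ branched along a quartic. In particular there is a finite morphism $\pi:X\to\P^n$ of degree $2$, which is exactly the data required to invoke Proposition~\ref{prop:LO}.

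First I would check that $X$ lies in the smooth locus of $\P$, so that Proposition~\ref{prop:weighted} applies. The only weight exceeding $1$ is $a_0=2$, so the singular locus of $\P(2,1,\ldots,1)$ is the single point $(1:0:\cdots:0)$. Since $\dim X=n\geq 3$, the intersection $X\cap\P_{\rm sing}$ is at most a point, whence $\codim_X(X\cap\P_{\rm sing})\geq 3$; as in the degree-one case this puts $X$ in general position relative to $\P_{\rm sing}$ in the sense of Dimca \cite{Dim}, so that $\Sing X=X\cap\P_{\rm sing}$. As $X$ is smooth this intersection is empty, i.e.\ $X$ avoids $\P_{\rm sing}$. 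Note also that $m=n+1\geq 4$, so all hypotheses of Proposition~\ref{prop:weighted} hold.

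Next I would compute $\deg c_n(X)$ from Proposition~\ref{prop:weighted}. Using the generating function $(1+2t)(1+t)^{n+1}$ one gets $e_k(2,1^{n+1})=\binom{n+1}{k}+2\binom{n+1}{k-1}$, and since $h=H$ with $H^n=d=2$ one must multiply the resulting coefficient by $\deg h^n=2$ (this normalization is the one genuine difference from the degree-one computation, where $H^n=1$). A routine evaluation of the alternating binomial sum $\sum_{i=0}^n e_{n-i}(2,1^{n+1})(-4)^i$ — splitting it into two sums and recognizing each as a truncation of $(1-4)^{n+1}=(-3)^{n+1}$ — should yield the closed form
\[
\deg c_n(X)=\frac{(-1)^n 3^{n+1}+4n+5}{4}.
\]

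Finally I would split on the parity of $n$. If $n$ is odd then $3^{n+1}$ dominates and $\deg c_n(X)<0$; since $\deg\Delta_X^2=\deg c_n(X)$ by Proposition~\ref{prop:diag^2}, the diagonal cannot be nef. If $n$ is even (so $n\geq 4$), then $\deg c_n(X)=\tfrac14(3^{n+1}+4n+5)$, and the inequality $3^{n+1}>4n+3$, obvious for $n\geq 4$, rearranges to $\deg c_n(X)>2(n+1)$; applying Proposition~\ref{prop:LO} to the degree-$2$ cover $\pi:X\to\P^n$ then shows $\Delta_X$ is not nef. I do not expect a genuine conceptual obstacle: the argument runs parallel to Proposition~\ref{prop:deg1}, and the only points demanding care are the bookkeeping in the symmetric-function computation and remembering the factor $\deg h^n=2$.
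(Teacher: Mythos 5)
Your proposal is correct and follows the paper's proof essentially verbatim: same reduction via Theorem~\ref{thm:delpezzo}, same Dimca-type argument placing $X$ in the smooth locus of $\P(2,1,\ldots,1)$, the same Chern class computation from Proposition~\ref{prop:weighted} (including the crucial factor $\deg h^n=2$), arriving at the identical closed form $\deg c_n(X)=\tfrac{1}{4}\{4n+5-(-3)^{n+1}\}$, and the same parity split using Proposition~\ref{prop:diag^2} for $n$ odd and Proposition~\ref{prop:LO} applied to the degree-$2$ cover for $n$ even.
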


\begin{proof} By Theorem~\ref{thm:delpezzo}, $X$ is a weighted hypersurface of degree $4$ in the weighted projective space $\P:=\P(2, 1, \ldots, 1)$. By the same arguments as in Proposition~\ref{prop:deg1}, the singular locus $\P_{\rm sing}$ consists of one point $\{(1: 0: \ldots: 0)\}$ and $X$ is contained in the smooth locus of $\P$.
Moreover, $X$ is a double cover branched along a quartic in $\P^n$. Applying Proposition~\ref{prop:weighted}, we obtain
$$\deg c_n(X) = 2\sum_{i=0}^ne_{n-i}(2,1^{n+1})(-4)^i.
$$
Remark that
$$e_k(2,1^{n+1})=2e_{k-1}(1^{n+1})+e_{k}(1^{n+1})=2\binom{n+1}{k-1}+\binom{n+1}{k}.
$$
Thus we have 
\begin{eqnarray}
 \deg c_n(X)&=&2\sum_{i=0}^n \biggl\{ 2\binom{n+1}{i+2}+\binom{n+1}{i+1} \biggr\}(-4)^i \nonumber \\
  &=&\dfrac{1}{4}\sum_{i=0}^{n-1} \binom{n+1}{i+2}(-4)^{i+2}-\dfrac{1}{2}\sum_{i=0}^{n}\binom{n+1}{i+1}(-4)^{i+1}  \nonumber \\
&=&\dfrac{1}{4}\biggl\{ (-3)^{n+1}-(n+1)(-4)-1\biggr\} -\dfrac{1}{2}\biggl\{ (-3)^{n+1}-1\biggr\} \nonumber \\
&=&\dfrac{1}{4}\biggl\{ 4n+5-(-3)^{n+1}\biggr\}.  \nonumber 
\end{eqnarray}
If $n$ is odd, then $\deg c_n(X)$ is negative. Thus $\Delta_X$ is not nef. If $n$ is even, then it is straightforward to show the following:
$$\dfrac{1}{4}\biggl\{ 4n+5-(-3)^{n+1}\biggr\}>2(n+1)~\text{if}~n \geq 4.
$$ 
Hence it follows from Proposition~\ref{prop:LO} that $\Delta_X$ is not nef.
\end{proof}

\subsection{Cases of degree five}\label{sec:deg5}

Let $X$ be a smooth del Pezzo variety of dimension $n \geq 3$ and degree $5$. By Theorem~\ref{thm:delpezzo}, $X$ is a linear section of the Grassmannian $G(2, \C^5) \subset \P^9$ embedded via the Pl\"ucker embedding. When $n=3$, $X$ is a fake projective space. When $n=6$, $X$ is the Grassmaniann $G(2, \C^5)$. Thus, in these cases, $\Delta_X$ is nef. We study the case of dimension $4$. 

Let $X$ be a del Pezzo $4$-fold of degree $5$. Then $X$ is a linear section of the Grassmannian $G(2, \C^5) \subset \P^9$. 
Let $p_{ij}$ be the Pl\"ucker coordinates of the Grassmannian  $G(2,\C^5) \subset \P({\wedge^k \C^5}^{\vee})$ $(0\leq i < j \leq 4)$. Since any del Pezzo $4$-fold of degree $5$ is isomorphic to each other, we may assume that $X \subset \P((\wedge^k \C^5)^{\vee})$ is defined by 
\begin{eqnarray}
&&p_{01}p_{23}-p_{02}p_{13}+p_{03}p_{12}=0 \nonumber \\
&&p_{01}p_{24}-p_{02}p_{14}+p_{04}p_{12}=0 \nonumber \\
&&p_{01}p_{34}-p_{03}p_{14}+p_{04}p_{13}=0 \nonumber \\
&&p_{02}p_{34}-p_{03}p_{24}+p_{04}p_{23}=0 \nonumber \\
&&p_{12}p_{34}-p_{13}p_{24}+p_{14}p_{23}=0 \nonumber \\
&&p_{12}-p_{03}=0 \nonumber \\
&&p_{13}-p_{24}=0 \nonumber
\end{eqnarray}
Then $X$ contains the following planes:
\begin{itemize}
\item $\Pi:=\{p_{ij}=0\mid (i,j)\neq (0, 1), (0,2), (0,4)\},$
\item $\Lambda:=\{p_{ij}=0\mid (i,j)\neq (0, 1), (0,4), (1,4)\}.$
\end{itemize}
These are the Schubert varieties $\sigma_{3,1}$ and $\sigma_{2,2}$ on $G(2,\C^5)$ respectively \cite[Chapter~1, Section~5]{GH}. Then it is known that $\deg \Pi\cdot \Lambda=-1$ (see for instance \cite[Proof~of~Corollary~4.7]{PZ}). Thus we obtain the following:

\begin{proposition}\label{prop:deg5:4fold} Let $X$ be a smooth del Pezzo $4$-fold of degree $5$. Then $\Delta_X$ is not nef.
\end{proposition}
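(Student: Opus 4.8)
The plan is to derive a contradiction directly from Proposition~\ref{prop:pseffcone}, using the two planes $\Pi$ and $\Lambda$ already exhibited inside $X$. First I would record that, since $\dim X = 4$, both $\Pi$ and $\Lambda$ are $2$-dimensional subvarieties, so their classes $[\Pi],[\Lambda]$ lie in $N_2(X)$ and are effective, hence pseudoeffective.

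Next, suppose for contradiction that $\Delta_X$ is nef. By Proposition~\ref{prop:pseffcone}, every pseudoeffective class on $X$ is then nef; applying this to $[\Pi]$ shows that $[\Pi]$ is a nef $2$-cycle. By the definition of nefness, $[\Pi]$ must have non-negative intersection number with every $(n-k)=(4-2)=2$-dimensional closed subvariety of $X$; in particular $\deg \Pi\cdot \Lambda \geq 0$.

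The key input, already established above via the identification of $\Pi$ and $\Lambda$ with the Schubert varieties $\sigma_{3,1}$ and $\sigma_{2,2}$ together with the computation $\deg \Pi\cdot \Lambda = -1$ (using \cite{GH} for the Schubert description and \cite{PZ} for the intersection number), then gives $\deg \Pi\cdot \Lambda = -1 < 0$, contradicting the nefness of $[\Pi]$. Hence $\Delta_X$ is not nef.

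The only genuinely substantive point is the negativity $\deg \Pi\cdot \Lambda = -1$, and this has been settled in the preceding discussion; the sole thing left to confirm is that both planes really do lie on the particular linear section $X$, which is immediate by checking that their defining coordinate conditions satisfy the explicit Pl\"ucker equations for $X$ listed above. Once $\deg \Pi\cdot \Lambda < 0$ is in hand, the clash with Proposition~\ref{prop:pseffcone} is automatic, so I do not expect any further obstacle; the proof is essentially a one-line application of the pseudoeffective-equals-nef principle to a pair of effective cycles of complementary dimension with negative intersection.
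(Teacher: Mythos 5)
Your proposal is correct and is essentially identical to the paper's own argument: the paper exhibits the same two planes $\Pi$ and $\Lambda$ on $X$, identifies them with the Schubert classes $\sigma_{3,1}$ and $\sigma_{2,2}$, cites $\deg \Pi\cdot\Lambda=-1$, and concludes via the fact that nefness of $\Delta_X$ would force every effective cycle to be nef (Proposition~\ref{prop:pseffcone}). You have merely made explicit the final contrapositive step that the paper leaves implicit.
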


\begin{proof}[Proof of the Theorem~\ref{thm:dPnef}] If $d=1$ or $2$, then $\Delta_X$ is not nef by Proposition~\ref{prop:deg1} and Proposition~\ref{prop:deg2}. If $d=3$ or $4$, then $X$ is a complete intersection. In these cases, it follows from Theorem~\ref{them:ci} that $\Delta_X$ is not nef provided that $X$ is not an odd-dimensional complete intersection of two quadrics. If $d=6$, then $X$ is homogeneous. Thus $\Delta_X$ is nef. If $d=7$, then $X$ admits a blow-up structure. Hence $\Delta_X$ is not nef by Proposition~\ref{prop:bir}. 

The remaining case is the case of degree five. Let us assume that $d=5$. 
As we have seen in the above Section~\ref{sec:deg5},  $\Delta_X$ is nef provided that $n=3$ or $6$. On the other hand, it follows from Proposition~\ref{prop:deg5:4fold} that $\Delta_X$ is not nef if $n=4$. When $n=5$, we will prove that $\Delta_X$ is not nef in Proposition~\ref{prop:dp:cone}  below.
\end{proof}

We end this section with two corollaries.

\begin{corollary}\label{cor:dp:big} Let $X$ be a smooth complex del Pezzo variety. Then the diagonal $\Delta_X$ is nef and big if and only if $X$ is  a $3$-dimensional linear section of the Grassmannian $G(2, \C^5) \subset \P^9$.
\end{corollary}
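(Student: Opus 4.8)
The plan is to derive this from the classification in Theorem~\ref{thm:dPnef}, exploiting that nefness \emph{and} bigness together are far more restrictive than nefness alone. The \emph{if} direction is immediate: a $3$-dimensional linear section of $G(2,\C^5)$ is a fake projective space, as recorded in Theorem~\ref{thm:dPnef}, and by Proposition~\ref{prop:fps} every fake projective space has nef and big diagonal. For the \emph{only if} direction, suppose $\Delta_X$ is nef and big. In particular $\Delta_X$ is nef, so Theorem~\ref{thm:dPnef} applies and $X$ is one of the six types $(1)$--$(6)$ listed there; it remains to eliminate every type except $(3)$.

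I would first dispatch the easy types. For type $(1)$, an odd-dimensional complete intersection of two quadrics, Remark~\ref{rem:22} gives $\deg\Delta_X^2=\deg c_{\dim X}(X)=0$; since $\Delta_X$ is an effective, hence pseudoeffective, nef class, taking $\eta=\Delta_X$ exhibits a nonzero nef class with $\Delta_X\cdot\eta=0$, so $\Delta_X$ lies on the boundary of $\overline{\Eff}_{\dim X}(X\times X)$ and is not big (this is precisely the argument of Corollary~\ref{cor:ci:big}). For types $(4)$, $(5)$, $(6)$, namely $\P^1\times\P^1\times\P^1$, $\P^2\times\P^2$ and $\P(T_{\P^2})$, the Picard number is at least two, which contradicts Proposition~\ref{prop:nefbig:Pic}, forcing $N^1(X)\cong\R$. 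Thus none of $(1)$, $(4)$, $(5)$, $(6)$ can have big diagonal.

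The main obstacle is type $(2)$, the Grassmannian $G(2,\C^5)$, where the two criteria above both fail: the Picard number is one, and by Proposition~\ref{prop:diag^2} together with Remark~\ref{rem:euler} one has $\deg\Delta_X^2=\deg c_6(X)=10>0$. To rule this out I would produce a nonzero nef class $\eta$ on $X\times X$ of complementary dimension with $\Delta_X\cdot\eta=0$; because $\Delta_X$ is effective and the dual of $\overline{\Eff}_6(X\times X)$ is $\Nef_6(X\times X)$, such an $\eta$ shows $\Delta_X$ is not an interior point of $\overline{\Eff}_6(X\times X)$, hence not big. I would take $\eta=\pi_1^{\ast}\sigma_2\cdot\pi_2^{\ast}\sigma_{2,2}$, the exterior product of the Schubert classes $\sigma_2\in N^2(X)$ and $\sigma_{2,2}\in N^4(X)$, with $\pi_1,\pi_2$ the two projections. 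On $G(2,\C^5)$ the Schubert classes are universally pseudoeffective in the sense of \cite{FL2}; pullback preserves this property, so $\pi_1^{\ast}\sigma_2$ and $\pi_2^{\ast}\sigma_{2,2}$ are universally pseudoeffective, and every universally pseudoeffective class is nef, while the product of a nef class with a universally pseudoeffective class is again nef. Hence $\eta$ is nef, and it is nonzero since $N(X\times X)=N(X)\otimes N(X)$ by the Schubert cell decomposition and $\sigma_2\otimes\sigma_{2,2}\neq 0$. Finally, restricting to the diagonal gives $\Delta_X\cdot\eta=\deg_X(\sigma_2\cdot\sigma_{2,2})$, and by Schubert duality in the $2\times 3$ rectangle the complement of $(2)$ is $(3,1)\neq(2,2)$, so this intersection number is $0$. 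This eliminates type $(2)$ and leaves only type $(3)$, completing the corollary.

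I expect the Grassmannian step to be the crux. The combinatorial input---two Schubert classes of complementary codimension meeting in a zero-cycle of degree zero---is elementary, but the genuine difficulty is verifying that the exterior product $\eta$ is nef, since nefness is not preserved by intersection products in general; this is exactly where the universal pseudoeffectivity of Schubert classes on homogeneous spaces is indispensable. Alternatively, one may prefer to settle type $(2)$ by invoking the discussion of Grassmannians in \cite{LO}.
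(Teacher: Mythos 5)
Your proof is correct, and its skeleton coincides with the paper's: both deduce from Theorem~\ref{thm:dPnef} that $X$ is one of the six listed types, handle the \emph{if} direction via the fake-projective-space observation and Proposition~\ref{prop:fps}, eliminate type (1) using $\deg\Delta_X^2=\deg c_{\dim X}(X)=0$ (the paper simply cites Corollary~\ref{cor:ci:big}, whose proof is exactly the boundary argument you reproduce), and eliminate types (4)--(6) via Proposition~\ref{prop:nefbig:Pic}. The one genuine divergence is type (2), the Grassmannian $G(2,\C^5)$: the paper disposes of it by citing \cite[Section~7.4]{LO}, whereas you prove non-bigness from scratch by exhibiting the nonzero nef class $\eta=\pi_1^{\ast}\sigma_2\cdot\pi_2^{\ast}\sigma_{2,2}$ of complementary dimension with $\Delta_X\cdot\eta=\deg(\sigma_2\cdot\sigma_{2,2})=0$; your Schubert calculus is right, since the dual of $(2,0)$ in the $2\times 3$ rectangle is $(3,1)\neq(2,2)$ (equivalently, Pieri forbids $\sigma_2\cdot\sigma_{2,2}$ from producing any class inside the rectangle). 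Your nefness verification for $\eta$ via universal pseudoeffectivity of Schubert classes is sound --- pullbacks of universally pseudoeffective classes are universally pseudoeffective, hence nef, and the product of a nef class with a universally pseudoeffective class is nef by pulling back to a resolution of any test subvariety --- though there is a shortcut you could have used: $X\times X$ is itself homogeneous under $G\times G$ and $\eta$ is effective there, so it is nef by Kleiman transversality. What your route buys is a self-contained argument independent of the external reference (and it makes explicit the mechanism behind the citation); what the paper's citation buys is brevity, since \cite[Section~7.4]{LO} carries out essentially this same Schubert-class computation for Grassmannians in general.
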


\begin{proof} Assume the diagonal $\Delta_X$ is nef and big and $X$ is not a $3$-dimensional linear section of the Grassmannian $G(2, \C^5) \subset \P^9$. By Theorem~\ref{thm:dPnef}, $X$ is an odd-dimensional complete intersection of two quadrics, the Grassmannian $G(2, \C^5)$ or a variety which satisfies $N^1(X) \not\cong \R$. In these cases, it follows from Corollary~\ref{cor:ci:big}, \cite[Section~7.4]{LO} and Proposition~\ref{prop:nefbig:Pic} that the diagonal is not nef and big. On the other hand, a $3$-dimensional linear section of the Grassmannian $G(2, \C^5) \subset \P^9$ is a fake projective space. Thus it has nef and big diagonal by Proposition~\ref{prop:fps}. 
\end{proof}

\begin{corollary}\label{cor:CP} Let $X$ be a smooth del Pezzo variety with nef tangent bundle. Then $X$ is homogeneous.
\end{corollary}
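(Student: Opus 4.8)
The plan is to combine Theorem~\ref{thm:dPnef} with the implication ``nef tangent bundle $\Rightarrow$ nef diagonal'' recorded in the introduction (following \cite{LO}). First I would observe that if $T_X$ is nef, then $\Delta_X$ is nef as a cycle on $X\times X$, so Theorem~\ref{thm:dPnef} applies and $X$ must be one of the six listed varieties. Four of them, namely $(2)$ the Grassmannian $G(2,\C^5)$, $(4)$ $\P^1\times\P^1\times\P^1$, $(5)$ $\P^2\times\P^2$ and $(6)$ $\P(T_{\P^2})$, are homogeneous, so for these there is nothing to prove. The whole task therefore reduces to eliminating cases $(1)$ and $(3)$ under the stronger hypothesis that $T_X$ is nef.

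For case $(1)$, suppose $X$ is an odd-dimensional complete intersection of two quadrics. Then $X$ is a complete intersection of hypersurfaces which is neither a projective space nor a quadric, so Corollary~\ref{cor:CPci} shows immediately that $T_X$ is not nef; concretely, the Poincar\'e-polynomial relation $p_M(t)p_{\P^1}(t)=p_X(t)p_F(t)$ derived there from the family of lines fails. Hence case $(1)$ cannot occur once $T_X$ is assumed nef.

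For case $(3)$, $X$ is a three-dimensional linear section of $G(2,\C^5)$, i.e.\ the quintic del Pezzo threefold. This is the crux, since it is \emph{not} covered by the complete-intersection analysis. Here I would invoke the Campana--Peternell conjecture in dimension three, which is known (see the references in the introduction, e.g.\ \cite{MOSWW}): every smooth Fano threefold with nef tangent bundle is rational homogeneous. Since $X$ is a fake projective space it is not isomorphic to $\P^3$, and as a del Pezzo threefold of degree five it is not homogeneous (its automorphism group has a dense orbit but does not act transitively), so $T_X$ is not nef. Alternatively, one can argue directly: $X$ carries special lines $\ell$ with $N_{\ell/X}\cong\cO(1)\oplus\cO(-1)$, whence $T_X|_\ell$ has an $\cO(-1)$ quotient and $T_X$ fails to be nef; I expect the bookkeeping of the line family to be the only delicate point of this variant.

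Combining the three steps leaves only the homogeneous varieties $(2),(4),(5),(6)$ compatible with the nefness of $T_X$, which gives the assertion. The main obstacle is case $(3)$: it genuinely requires either the verified three-dimensional instance of the Campana--Peternell conjecture or an explicit analysis of the special lines on the quintic del Pezzo threefold, rather than the numerical criteria that dispatch every other case.
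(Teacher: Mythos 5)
Your proposal is correct and follows essentially the same route as the paper: the paper also deduces nefness of $\Delta_X$ from nefness of $T_X$, applies Theorem~\ref{thm:dPnef} to reduce to the odd-dimensional intersection of two quadrics and the quintic del Pezzo threefold, eliminates the former via Corollary~\ref{cor:CPci}, and eliminates the latter by citing \cite[Theorem~5.1]{CP1}, which is exactly the dimension-three case of the Campana--Peternell conjecture you invoke (your alternative argument via special lines with normal bundle $\cO(1)\oplus\cO(-1)$ is a valid substitute but is not what the paper does).
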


\begin{proof} Assume that $X$ is not homogeneous. Then, by Theorem~\ref{thm:dPnef}, $X$ is isomorphic to an odd-dimensional complete intersection of two quadrics or a $3$-dimensional linear section of the Grassmannian $G(2, \C^5) \subset \P^9$ provided that $X$ is not homogeneous. In these cases, it follows from Corollary~\ref{cor:CPci} and \cite[Theorem~5.1]{CP1} that the tangent bundle is not nef. 
\end{proof}

\section{Spherical varieties}

In this section, we shall study the intersection theory of cycles on spherical varieties. For spherical varieties, we refer the reader to  \cite{Per}. 

\subsection{Spherical varieties}

\begin{definition}\label{def:spherical}
Let $G$ be a reductive linear algebraic group and $B$ a Borel subgroup of $G$. A smooth projective $G$-variety $X$ is {\it ($G$-)spherical} if it has a dense $B$-orbit. A $G$-spherical variety is {\it ($G$-)horospherical} if there is a point $x$ in the open $G$-orbit on $X$ such that the isotropy group $G_x$ contains the unipotent radical of $B$. A spherical $G$-variety $X$ is {\it ($G$-)toroidal} if every $B$-stable divisor but not $G$-stable contains no $G$-orbit. 
\end{definition}

\begin{remark}  We have two remarks.
\begin{enumerate}
\item For a smooth projective $G$-variety $X$, $X$ is spherical if and only if $X$ has finitely many $B$-orbits (see \cite[Theorem~2.1.2]{Per}). 
\item Any smooth projective toric variety is toroidal.
\end{enumerate}
\end{remark}

\begin{proposition}\label{prop:Sph} For a smooth projective spherical variety $X$, the following are equivalent to each other.
\begin{enumerate}
\item $\Delta_X$ is nef. 
\item $\Nef_k(X)=\Eff_k(X)$ for any $k$.
\item The closures of any $B$-orbit on $X$ have non-negative intersection against the closure of every $B$-orbit of the complementary dimension.
\end{enumerate}
\end{proposition}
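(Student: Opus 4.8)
The plan is to establish the equivalence of the three conditions by exploiting the special structure of $B$-orbits on spherical varieties, whose closures generate the relevant cones of cycles. The key input, which I would cite from \cite{Li} (and which underlies the reference to Theorem~\ref{them:spherical} in the introduction), is that on a smooth projective spherical variety the closures of the $B$-orbits span $N_k(X)$ and, more precisely, that the pseudoeffective cone $\Eff_k(X)$ is generated by the classes of closures of $k$-dimensional $B$-orbits. This is the analogue for spherical varieties of the classical fact that the effective cycles on a toric variety are generated by torus-invariant subvarieties, and it is what reduces the infinitely many effective classes to a finite combinatorial list indexed by the finitely many $B$-orbits.

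First I would prove the implication $(1)\Rightarrow(2)$. Assuming $\Delta_X$ is nef, Proposition~\ref{prop:pseffcone} gives that every pseudoeffective class is nef, i.e. $\Eff_k(X)\subseteq\Nef_k(X)$. The reverse inclusion $\Nef_k(X)\subseteq\Eff_k(X)$ is automatic on any spherical variety (indeed on any variety where effective cycles span), since a nef class pairs non-negatively with all subvarieties and in particular lies in the double dual of the effective cone; more directly, sphericality forces $\Eff_k(X)$ to be a full-dimensional rational polyhedral cone whose dual is $\Nef_{n-k}(X)$, and the containment $\Eff\subseteq\Nef$ in every codimension is self-dual, forcing equality. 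The cleanest route is to observe that $\Nef_k(X)$ is by definition the dual of $\Eff^{k}(X)=\Eff_{n-k}(X)$, so once $\Eff\subseteq\Nef$ holds for all $k$, dualizing and using that $\Eff$ is polyhedral yields the opposite inclusion.

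Next, $(2)\Rightarrow(3)$ is essentially a restatement: if $\Nef_k(X)=\Eff_k(X)$ for all $k$, then in particular the class of each $B$-orbit closure $\overline{\cO}$ of dimension $k$ is nef, hence has non-negative intersection with every effective cycle of complementary dimension, and the $B$-orbit closures of complementary dimension are such effective cycles. For $(3)\Rightarrow(1)$ I would run the generation fact in reverse: condition $(3)$ says precisely that each generator of $\Eff_k(X)$ is nef, i.e. pairs non-negatively with each generator of $\Eff_{n-k}(X)$; since these orbit-closure classes generate the respective cones, bilinearity of the intersection pairing upgrades this to $\Eff_k(X)\subseteq\Nef_k(X)$ for every $k$. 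Finally, to conclude that $\Delta_X$ is nef, I would invoke the decomposition of the diagonal available on a spherical variety: writing $[\Delta_X]\in N^n(X\times X)$ in terms of a K\"unneth-type or $B\times B$-orbit basis, nefness of $\Delta_X$ is equivalent to non-negativity of its pairing against all effective $(2n-n)$-cycles on $X\times X$, and since $X\times X$ is again $G\times G$-spherical, these reduce to orbit closures, i.e. to products of $B$-orbit closures; condition $(3)$ is exactly what makes all these pairings non-negative.

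The main obstacle I expect is the step connecting condition $(3)$ on $X$ to nefness of $\Delta_X$ on the product $X\times X$, namely verifying that the $B\times B$-orbit closures on $X\times X$ genuinely generate $\Eff(X\times X)$ and that the diagonal pairs against them through the intersection numbers appearing in $(3)$. One must check that $X\times X$ is spherical for $G\times G$ (immediate) and, more delicately, that the relevant pairings $[\Delta_X]\cdot([\overline{\cO_1}]\times[\overline{\cO_2}])$ reduce to the single-factor intersection numbers $\overline{\cO_1}\cdot\overline{\cO_2}$ on $X$ via the diagonal restriction formula. This is where the real content of \cite{Li} enters, and I would lean on it rather than reproving the generation statement from scratch.
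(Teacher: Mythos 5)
Your implications $(2)\Rightarrow(3)$ and $(3)\Rightarrow(1)$ are sound: $(2)\Leftrightarrow(3)$ is the FMSS generation statement, and your route to $(1)$ --- testing $\Delta_X$ against products of $B$-orbit closures, which generate the effective cone of the $(G\times G)$-spherical variety $X\times X$ by \cite[Theorem~1.3]{FMSS}, then using the diagonal restriction formula $\deg(\Delta_X\cdot(V\times W))=\deg(V\cdot W)$ --- is in substance what the paper does by quoting \cite[Corollary~3.5]{Li}. The genuine gap is in $(1)\Rightarrow(2)$: the inclusion $\Nef_k(X)\subseteq\Eff_k(X)$ is not ``automatic,'' and your duality argument for it is incorrect. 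The intersection pairing couples $N_k(X)$ with $N_{n-k}(X)$ (where $n=\dim X$), so dualizing the inclusion $\Eff_k(X)\subseteq\Nef_k(X)=(\overline{\Eff}_{n-k}(X))^{\vee}$ merely reproduces an inclusion of the same kind, $\overline{\Eff}_{n-k}(X)\subseteq\Nef_{n-k}(X)$, in the complementary dimension; a self-dual family of inclusions yields no new information and certainly does not force equality. A concrete convex-geometric counterexample: for the standard pairing on $\R^2$, let both ``effective'' cones be the cone spanned by $(1,0)$ and $(1,1)$. This cone is full-dimensional and polyhedral, its dual is the cone spanned by $(0,1)$ and $(1,-1)$, so effective does imply nef in both slots, yet the nef class $(0,1)$ is not effective. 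Moreover, your parenthetical claim that the inclusion holds ``on any variety where effective cycles span'' (i.e.\ on every smooth projective variety) is false: Debarre, Ein, Lazarsfeld and Voisin produced nef cycle classes that are not pseudoeffective, e.g.\ on self-products of very general abelian varieties. So no formal argument can close this step; it genuinely uses sphericality.

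The missing input is precisely \cite[Theorem~1.1]{Li}, which is what the paper cites at this point: on a complete spherical variety, $\Nef_k(X)\subseteq\Eff_k(X)=\overline{\Eff}_k(X)$ for every $k$. This is a theorem about spherical varieties whose proof uses the spherical structure in an essential way, and it does not follow from the FMSS statement you list as your ``key input'' (generation of $\Eff_k(X)$ by $B$-orbit closures); note that $B$-orbit closures span but need not form a basis with an effective dual basis, so nefness cannot be converted into non-negative coefficients by hand. Once you replace your duality argument with a citation of Li's Theorem~1.1, your proof closes up and coincides in substance with the paper's: $\Delta_X$ nef together with Proposition~\ref{prop:pseffcone} gives $\overline{\Eff}_k(X)\subseteq\Nef_k(X)$, Li's theorem gives the reverse inclusion, hence $(2)$; FMSS gives $(2)\Leftrightarrow(3)$; and sphericality of $X\times X$ gives $(3)\Rightarrow(1)$.
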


\begin{proof} $\rm (1) \Rightarrow (2)$ By \cite[Theorem~1.1]{Li}, any spherical variety satisfies $$\Nef_k(X)\subset \Eff_k(X)=\overline{\Eff}_k(X).$$ Thus Proposition~\ref{prop:pseffcone} yields $\Nef_k(X)=\Eff_k(X)$ provided that $\Delta_X$ is nef.\\
$\rm (2) \Rightarrow (1)$ Assume that $\Nef_k(X)=\Eff_k(X)$ for any $k$. By \cite[Corollary~3.5]{Li}, we have $\Nef_k(X \times X)=\Eff_k(X\times X)$ for $0 \leq k \leq 2 \dim X$. In particular, $\Delta_X$ is nef. \\
$\rm (2) \Leftrightarrow (3)$ According to \cite[Theorem~1.3]{FMSS}, the effective cone of a spherical variety is generated by the closures of $B$-orbits. Hence our assertion holds.
\end{proof}

\begin{corollary}[{\cite[Theorem~1.2]{Li}}]\label{cor:Li} For a smooth projective toroidal variety $X$ of dimension $n$, the following are equivalent to each other. 
\begin{enumerate}
\item $\Delta_X$ is nef. 
\item $\Nef_k(X)=\Eff_k(X)$ for any $k$.
\item $\Nef_k(X)=\Eff_k(X)$ for some $1 \leq k \leq n-1$.
\item $X$ is rational homogeneous.
\end{enumerate}
\end{corollary}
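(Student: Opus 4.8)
The plan is to exploit that a toroidal variety is in particular spherical, so that Proposition~\ref{prop:Sph} supplies the core equivalence and only two genuinely toroidal implications remain. Since $X$ is toroidal it is spherical by Definition~\ref{def:spherical}, and hence Proposition~\ref{prop:Sph} gives at once the equivalence $(1)\Leftrightarrow(2)$. The implication $(2)\Rightarrow(3)$ is immediate, since a property holding for every $k$ holds in particular for some $k$ with $1\leq k\leq n-1$. It therefore remains only to prove $(3)\Rightarrow(4)$ and $(4)\Rightarrow(1)$, which together close the chain $(1)\Leftrightarrow(2)\Rightarrow(3)\Rightarrow(4)\Rightarrow(1)$.

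For $(4)\Rightarrow(1)$ I would use the observation from the introduction: a rational homogeneous space $G/P$ has globally generated, hence nef, tangent bundle, and since $T_X$ is the normal bundle of $\Delta_X$ in $X\times X$, nefness of $T_X$ forces $\Delta_X$ to be nef. One can also bypass the tangent bundle and argue that all cones of cycles coincide: because $G$ acts transitively on $G/P$, Kleiman's transversality shows that a general translate of any effective cycle meets any effective cycle of complementary dimension properly and with non-negative intersection, so $\Eff_k(X)\subseteq\Nef_k(X)$; combining this with the reverse inclusion $\Nef_k(X)\subseteq\Eff_k(X)$ valid for every spherical variety by \cite[Theorem~1.1]{Li} yields $(2)$, whence $(1)$ by Proposition~\ref{prop:Sph}.

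The substantive implication is $(3)\Rightarrow(4)$, which is exactly the toroidal sharpening recorded in \cite[Theorem~1.2]{Li} and which lies beyond the reach of Proposition~\ref{prop:Sph} alone. I would proceed by contraposition: assuming $X$ is toroidal but \emph{not} rational homogeneous, I would exhibit, in the prescribed dimension $k$, an effective cycle that fails to be nef. The toroidal hypothesis is exactly what makes this feasible, since it forces the colored fan of $X$ to carry no colors inside its cones, so that the invariant subvarieties and their intersection numbers are governed by purely combinatorial rules; by \cite[Theorem~1.3]{FMSS} the closures of $B$-orbits already generate the effective cones, so it suffices to test nefness against these invariant cycles. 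Non-homogeneity forces the fan to be nontrivial, and from a suitable cone of the fan one extracts a pair of complementary-dimensional invariant strata meeting in negative degree --- the higher-codimensional analogue of a contractible $(-1)$-configuration --- producing an effective $k$-cycle that is not nef and thereby negating $(3)$.

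The main obstacle is precisely this last extraction: turning the qualitative failure of homogeneity into a strictly \emph{negative} intersection number that is realized in the given dimension $k$, and for \emph{every} admissible $k$ rather than merely in one convenient degree. This is the heart of Li's argument and depends on the explicit combinatorial intersection theory of invariant cycles on toroidal varieties; for this step I would invoke \cite[Theorem~1.2]{Li} rather than reconstruct the combinatorics here.
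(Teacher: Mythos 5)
Your proposal is correct and follows essentially the same route as the paper: the paper also deduces $(1)\Leftrightarrow(2)$ (and the trivial $(2)\Rightarrow(3)$) from Proposition~\ref{prop:Sph} and cites \cite[Theorem~1.2]{Li} for the substantive toroidal equivalence $(3)\Leftrightarrow(4)$, exactly as you do. Your explicit closing of the loop $(4)\Rightarrow(1)$ via nef tangent bundle (or Kleiman transversality plus \cite[Theorem~1.1]{Li}) is a harmless elaboration of what the paper leaves implicit in ``the remaining part directly follows from Proposition~\ref{prop:Sph}.''
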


\begin{proof} The equivalence of $\rm (3)$ and $\rm (4)$ is due to Q. Li {\cite[Theorem~1.2]{Li}}. The remaining part directly follows from Proposition~\ref{prop:Sph}.
\end{proof}

\begin{corollary}\label{cor:spherical:nefbig} For a smooth projective spherical variety $X$, assume that $\Delta_X$ is nef and big. Then $N_k(X) \cong \R$ for any $k$.
\end{corollary}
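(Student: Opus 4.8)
The plan is to use sphericity twice---once on $X\times X$ and once on $X$---to convert the bigness of $\Delta_X$ into a \emph{strict} positivity statement for the intersection pairing, and then to deduce from the resulting self-duality of the cones of cycles that each $N_k(X)$ is forced to be a single ray. Throughout put $n=\dim X$ and use that $X\times X$ is again spherical (for $G\times G$ with the product Borel subgroup), so that Proposition~\ref{prop:Sph} applies to it as well.

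First I would extract strict positivity from bigness. Since $\Delta_X$ is nef and $X\times X$ is spherical, Proposition~\ref{prop:Sph} gives $\Nef_n(X\times X)=\overline{\Eff}_n(X\times X)$. Bigness of $\Delta_X$ means $\Delta_X$ lies in the interior of $\overline{\Eff}_n(X\times X)$, hence in the interior of $\Nef_n(X\times X)$. As $\Nef_n(X\times X)$ is by definition the dual of $\overline{\Eff}^n(X\times X)=\overline{\Eff}_n(X\times X)$ under the (self-)pairing on $N_n(X\times X)$, and this pseudoeffective cone is salient and full-dimensional (a power of an ample class is a strictly positive functional on it), a class in the interior of the dual cone pairs strictly positively with every nonzero element of the cone. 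Thus $\Delta_X\cdot\gamma>0$ for all $\gamma\in\overline{\Eff}_n(X\times X)\setminus\{0\}$.

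Next I would transfer this to $X$ by external products. For $0\le a\le n$ and nonzero classes $\alpha\in\overline{\Eff}^a(X)$, $\beta\in\overline{\Eff}^{n-a}(X)$, set $\gamma:=\pi_1^{\ast}\alpha\cdot\pi_2^{\ast}\beta\in N_n(X\times X)$. It is pseudoeffective, since external products of effective cycles are effective and the external product map is continuous, and it is nonzero: pairing $\gamma$ against $\pi_1^{\ast}\delta\cdot\pi_2^{\ast}\eta$ gives $\deg(\alpha\cdot\delta)\,\deg(\beta\cdot\eta)$, which is nonzero for suitable $\delta,\eta$ by nondegeneracy of the pairing on $X$. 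Restricting to the diagonal yields $\Delta_X\cdot\gamma=\deg(\alpha\cdot\beta)$, so the previous step forces $\deg(\alpha\cdot\beta)>0$. In other words, the intersection pairing $\overline{\Eff}^a(X)\times\overline{\Eff}^{n-a}(X)\to\R$ is strictly positive on every pair of nonzero classes.

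Finally I would invoke Proposition~\ref{prop:Sph} for $X$ itself: nefness of $\Delta_X$ gives $\Nef_a(X)=\Eff_a(X)=\overline{\Eff}_a(X)$ for every $a$, the last equality because the effective cone of a spherical variety is closed. Since $\Nef_a(X)$ is dual to $\overline{\Eff}^a(X)$ and $\overline{\Eff}_a(X)=\overline{\Eff}^{n-a}(X)$, this reads $\bigl(\overline{\Eff}^a(X)\bigr)^{\vee}=\overline{\Eff}^{n-a}(X)$. Now suppose $\dim N^a(X)\ge 2$. Then the salient, full-dimensional cone $\overline{\Eff}^a(X)$ has a nonzero boundary point $\alpha$, and a supporting hyperplane at $\alpha$ produces a nonzero $\beta\in\bigl(\overline{\Eff}^a(X)\bigr)^{\vee}=\overline{\Eff}^{n-a}(X)$ with $\deg(\alpha\cdot\beta)=0$, contradicting the strict positivity above. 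Hence $\dim N^a(X)=1$ for every $a$, i.e. $N_k(X)\cong\R$ for all $k$. The main obstacle, and the only place where bigness is genuinely used, is the strict positivity in the first two steps; once the complementary pseudoeffective cones are known to pair strictly positively and to be mutually dual, the conclusion that they are rays is an elementary convexity argument.
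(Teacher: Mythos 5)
Your argument is essentially correct and, after its first step, takes a more self-contained route than the paper. One imprecision should be fixed at the start: applying Proposition~\ref{prop:Sph} to the spherical variety $X\times X$ would require nefness of the diagonal \emph{of} $X\times X$ (as a cycle on $(X\times X)\times(X\times X)$), not of $\Delta_X$, so the proposition's statement does not literally yield $\Nef_n(X\times X)=\overline{\Eff}_n(X\times X)$ from your hypotheses. The equality is nevertheless true, and the paper obtains it exactly ``as in the proof of Proposition~\ref{prop:Sph}'': nefness of $\Delta_X$ first gives $\Nef_k(X)=\Eff_k(X)$ on $X$ itself, and then the K\"unneth-type result \cite[Corollary~3.5]{Li}, invoked inside that proof, lifts the cone equality to $X\times X$. (Alternatively, you could argue through condition (3) of Proposition~\ref{prop:Sph}: the $B\times B$-orbit closures on $X\times X$ are products of $B$-orbit closures on $X$, and intersection numbers factor accordingly.) Either patch is routine, so this is a misattribution rather than a genuine gap.

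With that repaired, the rest of your proof is sound but diverges from the paper's, which is an argument by citation: from the cone equality on $X\times X$ the paper deduces that $\Delta_X$ is universally pseudoeffective via \cite[Example~4.5]{FL2}, and then concludes $N_k(X)\cong\R$ from \cite[Corollary~4.2]{LO}. You instead prove everything directly: bigness plus the cone equality places $\Delta_X$ in the interior of the dual cone of $\overline{\Eff}_n(X\times X)$, whence $\Delta_X\cdot\gamma>0$ for every nonzero pseudoeffective $\gamma$; the projection-formula computation $\Delta_X\cdot\pi_1^{\ast}\alpha\cdot\pi_2^{\ast}\beta=\deg(\alpha\cdot\beta)$ converts this into strict positivity of the pairing between complementary pseudoeffective cones on $X$; and the self-duality $\bigl(\overline{\Eff}^a(X)\bigr)^{\vee}=\overline{\Eff}^{n-a}(X)$, coming from Proposition~\ref{prop:Sph} applied to $X$ together with closedness of effective cones of spherical varieties \cite[Theorem~1.1]{Li}, forces $\dim N^a(X)=1$ by the supporting-hyperplane argument. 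The auxiliary facts you use (nondegeneracy of the numerical pairing, continuity of external products, full-dimensionality and saliency of pseudoeffective cones) are standard. The trade-off: the paper's proof is shorter but rests on two external results; yours avoids the notion of universal pseudoeffectivity altogether, makes visible exactly where bigness enters, and in effect reproves the needed case of \cite[Corollary~4.2]{LO} by elementary convexity.
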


\begin{proof} Let us assume that $\Delta_X$ is nef and big. As in the proof of Proposition~\ref{prop:Sph}, we have $\Nef_{\dim X}(X \times X)=\Eff_{\dim X}(X\times X)$. Then it follows from \cite[Example~4.5]{FL2} that $\Delta_X$ is universally pseudoeffective. Applying \cite[Corollary~4.2]{LO}, we see that $N_k(X) \cong \R$ for any $k$. %On the other hand, if we assume that $N_k(X) \cong \R$ for any $k$. Then we have $\Nef_k(X)=\Eff_k(X)$ for any $k$. By 
\end{proof}

\subsection{Odd symplectic Grassmannians}

In this section, we recall some known results on odd symplectic Grassmannians. 
All of materials here are in \cite[Section 3 and 4]{Mih}.

Let $E$ be a $(2n+1)$-dimensional complex vector space and $\omega \in \wedge^2 E^{\vee}$ a skew symmetric bilinear form of rank $2n$. We call this $\omega$ an {\it odd symplectic form on $E$}. In \cite{Proc}, R. A. Proctor introduces the {\it odd symplectic group}: 
$$
\Sp({2n+1}):=\{g\in {\rm  GL}(E) \mid \omega(gu,gv)=\omega(u,v) ~\text{for~all}~ u, v \in E\}. 
$$
Let $\overline{J}$ be the matrix of the form
$$\left(
    \begin{array}{cc}
      O & J \\
      -J & O 
    \end{array}
  \right),  
  $$
where $J$ be the anti-diagonal $(1,1, \ldots, 1)$ of size $n \times n$. 
Taking a basis $\{\bm{e_0}, \bm{e_1}, \ldots, \bm{e_{2n}}\}$ of $E$ suitably, we may assume that the form $\omega$ is given by the $2n \times 2n$ matrix
$$
(\omega(\bm{e_i},\bm{e_j}))=
\left(
    \begin{array}{cc}
      0 & 0 \\
      0 & \overline{J}
    \end{array}
  \right).%\footnote{The description of $\omega$ in \cite[Section~3]{Mih} is WRONG. In fact, the form in \cite{Mih} is NOT skew-symmetric.}  
  $$
%$$
%\omega(\bm{e_i},\bm{e_j})=\delta_{i, 2n+1-j} ~\text{for~any}~0 \leq i, j \leq 2n.
%$$
The restriction of $\omega$ to the subspace $F=\langle \bm{e_1}, \ldots, \bm{e_{2n}} \rangle_{\C}$ is a usual symplectic form.
Then the odd symplectic group $\Sp({2n+1})$ is represented as  
$$
\Sp({2n+1})=\biggl\{ \left(
    \begin{array}{cc}
      \lambda & \ell \\
      0 & S 
    \end{array}
  \right)  
  \mid \lambda \in \C^{\ast}, \ell \in \C^{2n}, S \in \Sp(^{2n})
 \biggr\},
$$ where $\Sp({2n})$ is the usual symplectic group with respect to the form $\omega|_{F}$.
This description tells us that $\Sp(2n+1)$ admits a Levi decomposition:
$$ 
\Sp(2n+1)\cong (\C^{\ast} \times \Sp({2n})) \ltimes \C^{2n}.
$$
The subgroup $B$ of $\Sp({2n+1})$ of upper triangular matrices is a Borel subgroup.

The {\it odd symplectic Grassmannian} is defined as a subvariety of a Grassmannian $G(k, E)$ parametrizing isotropic subspaces with respect to $\omega$: 
$$G_{\omega}(k, E):=\{[V] \in G(k, E)\mid \omega(V, V)=0\}.
$$ 
According to \cite[Proposition~4.3]{Mih}, the odd symplectic Grassmannian $G_{\omega}(k, E)$ has an action of the odd symplectic group $\Sp({2n+1})$ with two orbits
$$
X_0:=\{V \in G_{\omega}(k, E) \mid \bm{e_0} \in V\}, 
X_1:=\{V \in G_{\omega}(k, E) \mid \bm{e_0} \not\in V\}. 
$$
Furthermore, the closed orbit $X_0$ is isomorphic to $G_{\omega}(k-1, F)$, and the open orbit $X_1$ is isomorphic to the total space of the dual of the tautological bundle of $G_{\omega}(k, F)$. From this description, we see that $G_{\omega}(k, E)$ is a horospherical variety.
%In particular, $G_{\omega}(k, \C^{2n+1})$ is quasi-homogeneous. 
By \cite[Proposition~4.1]{Mih}, $G_{\omega}(k, E)$ is a smooth projective subvariety of codimension $\frac{k(k-1)}{2}$ of the Grassmannian $G(k, E)$. More precisely, it is the zero locus of a general section on $\wedge^2 \cS$, where $\cS$ is the rank $k$ tautological subbundle on $G(k, E)$ (see the proof of \cite[Proposition~4.1]{Mih}).
In particular, $G_{\omega}(2, E)$ is a hyperplane section of $G(2, E) \subset \P(\wedge^2 E^{\vee})$ by $\{\omega=0\}$. 

Since the odd symplectic form $\omega$ can be extended to a (usual) symplectic form $\tilde{\omega}$ on $\tilde{E}=\C^{2n+2}$ (see \cite[Section~3.2]{Mih}), $G_{\omega}(k, E)$ is a subvariety of a (usual) symplectic Grassmannian 
$$G_{\tilde{\omega}}(k, \tilde{E}):=\{[V] \in G(k, \tilde{E})\mid \tilde{\omega}(V, V)=0\}.$$ 
Moreover $G_{\omega}(k, E)$ identifies with the Schubert variety of $G_{\tilde{\omega}}(k, \tilde{E})$ that parametrizes $k$-dimensional isotropic subspaces contained in the hyperplane $E \subset \tilde{E}$. Then we may define {\it Schubert varieties} of the odd symplectic Grassmannian $G_{\omega}(k, E)$ to be those of $G_{\tilde{\omega}}(k, \tilde{E})$ contained in $G_{\omega}(k, E)$ (see \cite[Section~4.8]{Mih}).  Then we have

\begin{proposition}[{\cite[Proposition~4.12]{Mih}}]\label{prop:Mih} For an odd symplectic Grassmannian $G_{\omega}(k, \C^{2n+1})$, Schubert varieties coincide with the closures of $B$-orbits.
\end{proposition}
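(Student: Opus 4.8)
The plan is to transport the Bruhat decomposition from the ambient even symplectic Grassmannian. Recall that $G_{\omega}(k,E)$ sits inside $G_{\tilde{\omega}}(k,\tilde{E})$, which is a rational homogeneous space $\Sp(2n+2)/P$. Fix the complete isotropic flag of $\tilde{E}$ determined by the adapted basis $\{\bm{e_0},\ldots,\bm{e_{2n}},\bm{e_{2n+1}}\}$, with $\bm{e_{2n+1}}$ the $\tilde{\omega}$-partner of the radical vector $\bm{e_0}$, and let $\tilde{B}\subset \Sp(2n+2)$ be the Borel subgroup stabilizing this flag. By the Bruhat decomposition, $G_{\tilde{\omega}}(k,\tilde{E})$ is the disjoint union of the $\tilde{B}$-orbits and its Schubert varieties are exactly the $\tilde{B}$-orbit closures. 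Since $E=\langle \bm{e_0}\rangle^{\perp}$ is a member of the flag, the condition $V\subseteq E$ is a Schubert condition, so $G_{\omega}(k,E)$ is a $\tilde{B}$-stable Schubert variety; hence its Schubert varieties (those of $G_{\tilde{\omega}}(k,\tilde{E})$ contained in it) are precisely the closures of the $\tilde{B}$-orbits lying in $G_{\omega}(k,E)$.

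Next I would compare the two Borel subgroups. Realize $\Sp(2n+1)$ as a subgroup of the maximal parabolic $P_{1}:=\mathrm{Stab}_{\Sp(2n+2)}(\langle \bm{e_0}\rangle)$. A dimension count gives $\dim \Sp(2n+1)=\dim P_{1}-1$ and $\dim B=\dim \tilde{B}-1$, where $B=\tilde{B}\cap \Sp(2n+1)$ is the Borel of upper triangular matrices. Thus $\tilde{B}$ differs from $B$ by a single one-parameter subgroup $U_{\alpha}$, namely the long-root subgroup forming the centre of the Heisenberg unipotent radical of $P_{1}$; explicitly its elements act by $u_{\alpha}(t)\cdot \bm{e_{2n+1}}=\bm{e_{2n+1}}+t\bm{e_0}$ while fixing $\bm{e_0},\ldots,\bm{e_{2n}}$. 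In particular $U_{\alpha}$ fixes the hyperplane $E=\langle \bm{e_0},\ldots,\bm{e_{2n}}\rangle$ pointwise.

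The key step is now immediate. Since $U_{\alpha}$ fixes $E$ pointwise, it fixes every isotropic subspace $V\subseteq E$, hence acts trivially on all of $G_{\omega}(k,E)$. Writing $\tilde{B}=B\cdot U_{\alpha}$, we obtain $\tilde{B}\cdot x=B\cdot x$ for every $x\in G_{\omega}(k,E)$. Therefore the $\tilde{B}$-orbits on $G_{\omega}(k,E)$ coincide with the $B$-orbits, and passing to closures identifies the Schubert varieties of $G_{\omega}(k,E)$ with the closures of the $B$-orbits, as claimed. (That there are only finitely many such orbits also follows from \cite[Theorem~2.1.2]{Per}, since $G_{\omega}(k,E)$ is spherical.)

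The one non-formal input, and the step I expect to require the most care, is the second paragraph: because $\Sp(2n+1)$ is not reductive --- it carries the unipotent radical $\C^{2n}$ and admits no Bruhat decomposition of its own --- one must work inside the ambient reductive group $\Sp(2n+2)$ and verify precisely that the single direction by which $\tilde{B}$ exceeds $B$ is the long-root subgroup acting trivially on $E$. This rests on the special shape of $\omega$, whose rank is $2n$ and whose radical is exactly $\langle \bm{e_0}\rangle$; once it is in place, the coincidence of Schubert varieties with $B$-orbit closures is purely formal.
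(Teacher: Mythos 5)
The paper itself gives no proof of this statement --- it is quoted directly from Mihai's paper --- so your argument can only be measured against the cited source, and your overall strategy (transport the Bruhat decomposition of $G_{\tilde{\omega}}(k,\tilde{E})$ through the Schubert-variety embedding, then show that the one extra direction of $\tilde{B}$ acts trivially on $E$) is indeed the right one; your first and third paragraphs are correct in substance. However, the second paragraph, which you yourself single out as the crux, contains a genuine error: $\Sp(2n+1)$ cannot be realized as a subgroup of $P_{1}=\mathrm{Stab}_{\Sp(2n+2)}(\langle \bm{e_0}\rangle)$ in a way compatible with its action on $E$. It is a \emph{quotient} of $P_{1}$, namely $P_{1}/U_{\alpha}$, and the extension $1 \to U_{\alpha} \to P_{1} \to \Sp(2n+1) \to 1$ does not split: the unipotent radical of $P_{1}$ is a Heisenberg group $H$ of dimension $2n+1$ with centre $U_{\alpha}$, and a splitting would produce a $2n$-dimensional abelian subgroup of $H$ meeting $Z(H)=U_{\alpha}$ trivially and surjecting onto $H/Z(H)$; but the image of any abelian subgroup of $H$ in $H/Z(H)\cong \C^{2n}$ is isotropic for the commutator pairing, hence of dimension at most $n$. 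So the expressions ``$B=\tilde{B}\cap \Sp(2n+1)$'' and ``$\tilde{B}=B\cdot U_{\alpha}$'' are not meaningful; the dimension counts you give are consistent with the quotient relation and do not certify a subgroup relation. This failure is precisely a manifestation of the non-reductiveness you flagged.

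The repair is local and preserves your idea. Restriction to $E$ defines a surjective homomorphism $\pi\colon P_{1}\to \Sp(2n+1)$ whose kernel is exactly $U_{\alpha}$: if $g\in \Sp(2n+2)$ fixes $E$ pointwise, then pairing $g\bm{e_{2n+1}}$ against $\bm{e_0}$ and against $\bm{e_1},\ldots,\bm{e_{2n}}$ forces $g\bm{e_{2n+1}}=\bm{e_{2n+1}}+t\bm{e_0}$, because the radical of $\omega$ is $\langle\bm{e_0}\rangle$. Since $\tilde{B}\subset P_{1}$ and $\ker\pi$ acts trivially on subspaces of $E$, the $\tilde{B}$-action on $G_{\omega}(k,E)$ factors through $\pi$, so $\tilde{B}\cdot x=\pi(\tilde{B})\cdot x$ for every $x\in G_{\omega}(k,E)$. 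It then remains to verify $\pi(\tilde{B})=B$: the restriction to $E$ of an upper triangular element of $\Sp(2n+2)$ is an upper triangular element of $\Sp(2n+1)$, and conversely every upper triangular $g\in \Sp(2n+1)$ extends to an upper triangular element of $\Sp(2n+2)$, the last column being determined by $g$ up to the $U_{\alpha}$-ambiguity. With this substitution for your second paragraph, the equality $\tilde{B}\cdot x=B\cdot x$ holds, and your first and third paragraphs complete the proof, in substance the argument of the cited reference.
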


\subsection{Cones of cycles of the odd symplectic Grassmannian of lines}

Combining Proposition~\ref{prop:Mih} and \cite[Theorem~1.3]{FMSS}, we obtain the following:

\begin{proposition}\label{prop:OSG:effective} For an odd symplectic Grassmannian $G_{\omega}(k, \C^{2n+1})$, the effective cone of cycles is generated by Schubert varieties.
\end{proposition}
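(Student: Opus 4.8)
The plan is to combine two external ingredients in a formal, almost immediate way. The first is Proposition~\ref{prop:Mih}, which identifies the Schubert varieties of the odd symplectic Grassmannian $G_{\omega}(k, \C^{2n+1})$ with the closures of the $B$-orbits for a Borel subgroup $B$ of $\Sp(2n+1)$. The second is \cite[Theorem~1.3]{FMSS}, which asserts that for a spherical variety the effective cone of cycles (in each dimension) is generated by the closures of the $B$-orbits. Since $G_{\omega}(k, \C^{2n+1})$ is a horospherical, hence spherical, variety, both results apply to it directly.

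First I would recall that $G_{\omega}(k, \C^{2n+1})$ carries an action of the odd symplectic group $\Sp(2n+1)$ with finitely many $B$-orbits, so that it is a smooth projective spherical variety in the sense of Definition~\ref{def:spherical} (this was already established in the preceding subsection). Then \cite[Theorem~1.3]{FMSS} gives that, for every $k$, the effective cone $\Eff_k$ is spanned by the classes of the closures of the $B$-orbits of dimension $k$. Finally, substituting the identification of Proposition~\ref{prop:Mih}—namely that these $B$-orbit closures are exactly the Schubert varieties—yields that the effective cone of cycles is generated by the Schubert varieties, which is the assertion.

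The proof is therefore a one-line concatenation: invoke \cite[Theorem~1.3]{FMSS} to reduce the generation statement to $B$-orbit closures, then apply Proposition~\ref{prop:Mih} to rewrite those generators as Schubert varieties. I do not expect a genuine obstacle here, since all the substantive work has been done in the two cited results; the only point requiring a modicum of care is making explicit that $G_{\omega}(k, \C^{2n+1})$ satisfies the hypotheses of \cite[Theorem~1.3]{FMSS}, i.e. that it is indeed spherical under the $\Sp(2n+1)$-action, which follows from the horospherical description given above.
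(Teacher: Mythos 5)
Your proposal is correct and coincides with the paper's own argument: the paper derives Proposition~\ref{prop:OSG:effective} precisely by combining Proposition~\ref{prop:Mih} (Schubert varieties are the $B$-orbit closures) with \cite[Theorem~1.3]{FMSS} (the effective cone of a spherical variety is generated by $B$-orbit closures). Your added remark about verifying the sphericity hypothesis is a reasonable point of care, but it is exactly the setup the paper establishes in the preceding subsection, so there is no divergence in approach.
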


From now on, we focus on the odd symplectic Grassmannian of lines $G_{\omega}(2, \C^{2n+1})$. 
To describe Schubert varieties of odd symplectic Grassmannians, we employ the notation by C. Pech \cite[P. 191]{Pe}.
As in \cite[Section~1.2]{Pe}, we denote by $\tau_{\lambda}$ the cohomology class associated to the Schubert variety $X_{\lambda}(F_{\bullet}) \subset G_{\omega}(2, \C^{2n+1})$, where $F_{\bullet}$ is an isotropic complete flag of $\C^{2n+1}$ and $\lambda=(\lambda_1, \lambda_2)$ (see for details \cite[Section~1.1]{Pe}). The codimension of the cycle $\tau_{\lambda}$ is equal to $\lambda_1+\lambda_2$. Then, by \cite[Remark 1.1.1]{Pe}, $\lambda$ is either 
\begin{itemize}
\item $(n-2)$-strict partitions $(2n-1 \geq \lambda_1 \geq \lambda_2 \geq 0)$, or 
\item the partition $(2n-1, -1)$.
\end{itemize}

Assume that $n \geq 2$. We chose non-negative integers $a \geq b$ satisfying $a+b=2n-1$.
From the Pech's Pieri formula \cite[Proposition~5]{Pe}, the intersection number of the cycles $\tau_{a,b}$ and $\tau_{2n-1, -1}$ is given by 
$$
\deg(\tau_{a,b}\cdot \tau_{2n-1, -1}) =(-1)^{a-1}.
$$ 
This implies that $\tau_{2n-1, -1}$ is a non-nef effective cycle. Thus we have

\begin{proposition}\label{prop:OSGL} Let $X$ be the odd symplectic Grassmannian of lines $G_{\omega}(2, \C^{2n+1})$. Then $\Delta_X$ is not nef. 
\end{proposition}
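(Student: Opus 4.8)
The plan is to establish non-nefness of $\Delta_X$ for $X=G_{\omega}(2,\C^{2n+1})$ by exhibiting a single effective cycle on $X$ that fails to be nef, and then invoke Proposition~\ref{prop:pseffcone}. Indeed, if $\Delta_X$ were nef, then every pseudoeffective (in particular every effective) class on $X$ would be nef, so it suffices to produce one effective cycle with a negative intersection number against some closed subvariety of complementary dimension. The natural candidate is a Schubert class, since by Proposition~\ref{prop:OSG:effective} the effective cone of cycles on the odd symplectic Grassmannian is generated by Schubert varieties, and these come with a combinatorial intersection calculus.

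The key computational input is Pech's description of the Schubert classes $\tau_{\lambda}$ on $G_{\omega}(2,\C^{2n+1})$, where the indexing partition $\lambda=(\lambda_1,\lambda_2)$ ranges over the $(n-2)$-strict partitions together with the exceptional partition $(2n-1,-1)$. The class $\tau_{2n-1,-1}$ is effective (it is the class of an honest Schubert variety), so if I can show it is \emph{not} nef, I am done. First I would fix complementary-dimensional data: for non-negative integers $a\geq b$ with $a+b=2n-1$, the class $\tau_{a,b}$ has codimension $2n-1$, which is exactly complementary to the codimension of $\tau_{2n-1,-1}$ inside $G_{\omega}(2,\C^{2n+1})$ (whose dimension is $4n-2$). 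Then I would apply Pech's Pieri-type formula \cite[Proposition~5]{Pe} to evaluate $\deg(\tau_{a,b}\cdot\tau_{2n-1,-1})$, obtaining the value $(-1)^{a-1}$ as recorded just above the statement.

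The upshot is that by choosing $a$ even (which is possible: since $a+b=2n-1$ is odd, one of $a,b$ is even, and taking $a$ to be the even one with $a\geq b$ works for $n\geq 2$ after possibly relabelling, or more simply one selects any admissible $a$ with $a-1$ odd), the intersection number $\deg(\tau_{a,b}\cdot\tau_{2n-1,-1})=(-1)^{a-1}=-1$ is strictly negative. Since $\tau_{a,b}$ is the class of an effective, hence pseudoeffective, cycle of complementary dimension, this negative intersection number shows directly that the effective class $\tau_{2n-1,-1}$ is not nef. Therefore there exists a pseudoeffective class on $X$ that is not nef, and by the contrapositive of Proposition~\ref{prop:pseffcone} the diagonal $\Delta_X$ cannot be nef.

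I expect the main obstacle to be purely bookkeeping rather than conceptual: one must verify that the indexing conventions of \cite{Pe} are applied correctly, that $\tau_{2n-1,-1}$ genuinely represents an effective cycle class (so that its non-nefness is meaningful for our purposes), and that the complementary codimensions match so that $\deg(\tau_{a,b}\cdot\tau_{2n-1,-1})$ is a genuine intersection number of a zero-cycle. Once the Pieri formula is correctly invoked and the sign $(-1)^{a-1}$ is confirmed to be negative for an admissible choice of $a$, the conclusion follows immediately from Proposition~\ref{prop:pseffcone}; no further positivity estimates of the kind used in the complete-intersection and del Pezzo cases are needed here, since a single negative intersection number already obstructs nefness of the diagonal.
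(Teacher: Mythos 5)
Your proposal is correct and follows essentially the same route as the paper: the paper's own argument likewise invokes Pech's Pieri formula to get $\deg(\tau_{a,b}\cdot\tau_{2n-1,-1})=(-1)^{a-1}$ for $a\geq b\geq 0$ with $a+b=2n-1$, chooses the sign to be negative, concludes that the effective Schubert class $\tau_{2n-1,-1}$ is not nef, and then applies Proposition~\ref{prop:pseffcone} (via Proposition~\ref{prop:OSG:effective} for effectivity). One small correction to your bookkeeping: $\dim G_{\omega}(2,\C^{2n+1})=4n-3$, not $4n-2$, since it is a hyperplane section of the $(4n-2)$-dimensional Grassmannian $G(2,\C^{2n+1})$; this is precisely what makes the codimensions $2n-1$ and $2n-2$ complementary, as your argument requires.
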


Furthermore, we may describe the cone of pseudoeffective/nef cycles of a del Pezzo $5$-fold of degree $5$ as follows. Let $X$ be a del Pezzo $5$-fold of degree $5$. By Theorem~\ref{thm:delpezzo}, $X$ is a hyperplane section of $G(2, \C^5)$. Thus it is an odd symplectic Grassmaniann $G_{\omega}(2, \C^{5})$. According to Proposition~\ref{prop:OSG:effective} and the above argument, the cone of effective cycles on $X$ is generated by 
\begin{itemize}
\item $\tau_{(0,0)}$: the whole space $X$
\item $\tau_{(1,0)}$: a hyperplane section of $X$
\item $\tau_{(2,0)}$: a cycle of codimension $2$
\item $\tau_{(3,-1)}$: a cycle of codimension $2$
\item $\tau_{(3,0)}$: a plane
\item $\tau_{(2,1)}$: a plane
\item $\tau_{(3,1)}$: a line
\item $\tau_{(3,2)}$: a point.
\end{itemize}
Thanks to \cite[Proposition~4~and~5]{Pe}, the intersection numbers are given by
$$
\deg(\tau_{(1,0)}\cdot \tau_{(3,1)})=1, 
\deg(\tau_{(2,0)}\cdot \tau_{(3,0)})=0, \deg(\tau_{(2,0)}\cdot \tau_{(2,1)})=1,$$
$$\deg( \tau_{(3,-1)}\cdot \tau_{(3,0)})=1, \deg(\tau_{(3,-1)}\cdot \tau_{(2,1)})=-1.
$$
As a consequence, we obtain the following.

\begin{proposition}\label{prop:dp:cone} Let $X$ be a smooth del Pezzo $5$-fold of degree five. Then we have 
\begin{align}
\Nef^2(X)&=\R_{\geq 0} \tau_{(2,0)} \oplus \R_{\geq 0} [\tau_{(2,0)}+\tau_{(3,-1)}], \nonumber \\
\cEff^2(X)&=\R_{\geq 0} \tau_{(2,0)} \oplus \R_{\geq 0} \tau_{(3,-1)},   \nonumber  \\
\Nef^3(X)&=\R_{\geq 0} \tau_{(3,0)} \oplus \R_{\geq 0} [\tau_{(3,0)}+\tau_{(2,1)}], \nonumber \\
\cEff^3(X)&=\R_{\geq 0} \tau_{(3,0)} \oplus \R_{\geq 0} \tau_{(2,1)}.   \nonumber  
\end{align}
In particular, $\Delta_X$ is not nef. 
\end{proposition}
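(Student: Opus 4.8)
The plan is to reduce the whole statement to a two-dimensional linear-algebra computation in $N^2(X)$ and $N^3(X)$, using the list of Schubert generators and the five intersection numbers recorded just above the statement. First I would read off the pseudoeffective cones. By Proposition~\ref{prop:OSG:effective} the pseudoeffective cone of cycles on $X=G_\omega(2,\C^5)$ is generated by Schubert classes, and from the enumeration of these classes the only ones of codimension $2$ are $\tau_{(2,0)}$ and $\tau_{(3,-1)}$, while the only ones of codimension $3$ are $\tau_{(3,0)}$ and $\tau_{(2,1)}$. Since a cone spanned by finitely many rays is already closed, this immediately gives $\cEff^2(X)=\R_{\geq0}\tau_{(2,0)}\oplus\R_{\geq0}\tau_{(3,-1)}$ and $\cEff^3(X)=\R_{\geq0}\tau_{(3,0)}\oplus\R_{\geq0}\tau_{(2,1)}$, once I check that each pair is numerically independent.

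To establish that independence, and to record the pairing I will need afterwards, I would form the intersection matrix of the codimension-$2$ classes against the codimension-$3$ classes, which by the listed numbers is $M=\left(\begin{smallmatrix}0&1\\1&-1\end{smallmatrix}\right)$ (rows $\tau_{(2,0)},\tau_{(3,-1)}$; columns $\tau_{(3,0)},\tau_{(2,1)}$). Its determinant is $-1$, so the pairing is perfect. In particular $N^2(X)$ and $N^3(X)$ are both two-dimensional with the displayed Schubert classes as bases, and $N^3(X)$ is identified with the dual of $N^2(X)$; note also that $M$ is symmetric, which will make the two subsequent dualizations mirror each other.

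Next I would compute the nef cones as dual cones. From the preliminaries, $\Nef_k$ is dual to $\cEff^k$ under the intersection pairing, which in codimension means $\Nef^2(X)$ is dual to $\cEff^3(X)$ and $\Nef^3(X)$ is dual to $\cEff^2(X)$. Writing a general codimension-$2$ class as $x\tau_{(2,0)}+y\tau_{(3,-1)}$ and imposing nonnegative intersection against the generators $\tau_{(3,0)},\tau_{(2,1)}$ of $\cEff^3(X)$ yields the two inequalities $y\geq0$ and $x-y\geq0$. The cone they cut out is exactly $\R_{\geq0}\tau_{(2,0)}\oplus\R_{\geq0}[\tau_{(2,0)}+\tau_{(3,-1)}]$, since the generators $\tau_{(2,0)}=(1,0)$ and $\tau_{(2,0)}+\tau_{(3,-1)}=(1,1)$ lie on the two boundary walls $y=0$ and $x=y$. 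Because $M$ is symmetric, the computation of $\Nef^3(X)$ is formally identical with the two bases interchanged: a class $u\tau_{(3,0)}+v\tau_{(2,1)}$ is nef iff $v\geq0$ and $u-v\geq0$, giving the claimed $\R_{\geq0}\tau_{(3,0)}\oplus\R_{\geq0}[\tau_{(3,0)}+\tau_{(2,1)}]$.

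Finally, for the last assertion I would simply observe that $\cEff^2(X)\neq\Nef^2(X)$: the ray $\tau_{(3,-1)}=(0,1)$ is pseudoeffective but violates $x-y\geq0$, hence is not nef. Thus not every pseudoeffective class on $X$ is nef, and Proposition~\ref{prop:pseffcone} forces $\Delta_X$ to be non-nef (this also recovers Proposition~\ref{prop:OSGL} in the case $n=2$). The only real subtlety I anticipate is bookkeeping rather than mathematics: verifying that the effective cones genuinely have no further generators in codimensions $2$ and $3$ beyond the four listed Schubert classes, and keeping straight which effective cone is dual to which nef cone. Once the $2\times2$ pairing matrix $M$ is in hand, everything else is a short explicit dualization.
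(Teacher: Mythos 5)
Your proposal is correct and follows essentially the same route as the paper: the effective cones are read off from the Schubert generators via Proposition~\ref{prop:OSG:effective}, the nef cones are obtained by dualizing against the recorded intersection numbers, and the non-nefness of $\Delta_X$ follows from $\tau_{(3,-1)}$ being effective but not nef together with Proposition~\ref{prop:pseffcone}. The explicit $2\times 2$ pairing matrix and the dualization you spell out are exactly the computation the paper leaves implicit in the phrase ``as a consequence.''
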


\begin{remark}[An answer to a question by Q. Li]\label{rem:Li}
A del Pezzo $5$-fold $X$ of degree $5$ is a smooth projective horospherical variety satisfying $\Nef^1(X)=\overline{\Eff}^1(X)$ and $\Nef^2(X) \neq \overline{\Eff}^2(X)$. This gives an answer to a question raised by Q. Li \cite[pp. 971-972]{Li}.
\end{remark}

\if0
\begin{proposition}\label{prop:deg5:5fold} Let $X$ be a del Pezzo $5$-fold of degree $5$. Then $\Delta_X$ is NOT nef.
\end{proposition}

\begin{proof} By Theorem~\ref{thm:delpezzo}, $X$ is a hyperplane section of $G(2, \C^5)$. Thus it is an odd symplectic Grassmaniann $G_{\omega}(2, \C^{5})$. Combining Proposition~\ref{prop:Sph} and \ref{prop:Mih}, it is enough to study the positivity of $B$-orbits....  
\end{proof}
\fi

\if0

\section{Nefness of tangent bundle/diagonal}

As an application of the results we obtained in above sections, we study the following conjecture due to Campana and Peternell:

\begin{conjecture}[CP-conjecture]\label{conj:CP} Let $X$ be a complex smooth Fano variety. If $T_X$ is nef, then $X$ is homogeneous.  
\end{conjecture}

For a smooth projective variety $X$, the normal bundle of the diagonal $\Delta_X$ in $X\times X$ is the tangent bundle $T_X$. Thus if $T_X$ is nef, then it follows from \cite[Corollary~8.4.3]{L2} that $\Delta_X$ is nef as a cycle on $X \times X$. By Theorem~\ref{MT} and Corollary~\ref{cor:Li}, we obtain the following:

\begin{theorem}\label{them:CP} Let $X$ be a smooth Fano variety with nef tangent bundle. Then $X$ is homogeneous if $X$ is one of the following:
\begin{enumerate}
\item a complete intersection of hypersurfaces,
\item a del Pezzo variety or
\item a toroidal variety.
\end{enumerate}
\end{theorem}

\begin{proof} By Theorem~\ref{MT} and Corollary~\ref{cor:Li}, $X$ is isomorphic to an odd-dimensional complete intersection of two quadrics or a linear section $G(2, \C^5) \cap (1)^3$ if $X$ is not homogeneous. If $X$ is an odd-dimensional complete intersection of two quadrics, then

\end{proof}

For complete intersections, we also obtain the following:

\begin{theorem}\label{them:CPpos} Let $X$ be a smooth Fano variety defined over an algebraically closed field $k$. If $X$ is a complete intersection of hypersurfaces, then $X$ is homogeneous.
\end{theorem}

\begin{proof} For smooth projective varieties defined over any algebraically closed field, it is easy to check that Proposition~\ref{prop:diag^2}, Proposition~\ref{prop:LO} and all results in Section~\ref{sect:ci} except Lemma \ref{lem:except} hold.  Let $X$ be a smooth Fano complete intersection with nef $T_X$. By the same argument as in Theorem~\ref{them:ci}, $X$ is a cubic surface or a complete intersection of type $(2,2)$. Since any smooth cubic surface 
 has a $(-1)$-curve, its tangent bundle is not nef. Thus, to prove our assertion, it is enough to deal with the case of complete intersections of type $(2,2)$. 
 
Let $X$ be a complete intersections of type $(2,2)$.

\end{proof}
\fi

%%%%%%%%%%%%%%%%%%%%%%%%%%%%%%%%%%%%%%%%%%%%%%%
%\nocite{*} %comment if we don't want the uncited references to appear
\bibliographystyle{plain}
\bibliography{NefDiagonal}

\begin{thebibliography}{10}

\bibitem{Az}
Vicente~Navarro Aznar.
\newblock On the {C}hern classes and the {E}uler characteristic for nonsingular
  complete intersections.
\newblock {\em Proc. Amer. Math. Soc.}, 78(1):143--148, 1980.

\bibitem{BC}
Victor~V. Batyrev and David~A. Cox.
\newblock On the {H}odge structure of projective hypersurfaces in toric
  varieties.
\newblock {\em Duke Math. J.}, 75(2):293--338, 1994.

\bibitem{CP1}
Fr{\'e}d{\'e}ric Campana and Thomas Peternell.
\newblock Projective manifolds whose tangent bundles are numerically effective.
\newblock {\em Math. Ann.}, 289(1):169--187, 1991.

\bibitem{Dim}
Alexandru Dimca.
\newblock Singularities and coverings of weighted complete intersections.
\newblock {\em J. Reine Angew. Math.}, 366:184--193, 1986.

\bibitem{DD}
Alexandru Dimca and Stancho Dimiev.
\newblock On analytic coverings of weighted projective spaces.
\newblock {\em Bull. London Math. Soc.}, 17(3):234--238, 1985.

\bibitem{Fuj1}
Takao Fujita.
\newblock On the structure of polarized manifolds with total deficiency one.
  {I}.
\newblock {\em J. Math. Soc. Japan}, 32(4):709--725, 1980.

\bibitem{Fuj2}
Takao Fujita.
\newblock On the structure of polarized manifolds with total deficiency one.
  {II}.
\newblock {\em J. Math. Soc. Japan}, 33(3):415--434, 1981.

\bibitem{Fuj3}
Takao Fujita.
\newblock On the structure of polarized manifolds with total deficiency one.
  {III}.
\newblock {\em J. Math. Soc. Japan}, 36(1):75--89, 1984.

\bibitem{Fuj}
Takao Fujita.
\newblock {\em Classification theories of polarized varieties}, volume 155 of
  {\em London Mathematical Society Lecture Note Series}.
\newblock Cambridge University Press, Cambridge, 1990.

\bibitem{FL2}
Mihai Fulger and Brian Lehmann.
\newblock Positive cones of dual cycle classes.
\newblock {\em Algebr. Geom.}, 4(1):1--28, 2017.

\bibitem{FMSS}
W.~Fulton, R.~MacPherson, F.~Sottile, and B.~Sturmfels.
\newblock Intersection theory on spherical varieties.
\newblock {\em J. Algebraic Geom.}, 4(1):181--193, 1995.

\bibitem{Ful}
William Fulton.
\newblock {\em Intersection theory}, volume~2 of {\em Ergebnisse der Mathematik
  und ihrer Grenzgebiete. 3. Folge. A Series of Modern Surveys in Mathematics
  [Results in Mathematics and Related Areas. 3rd Series. A Series of Modern
  Surveys in Mathematics]}.
\newblock Springer-Verlag, Berlin, second edition, 1998.

\bibitem{Fur}
Katsuhisa Furukawa.
\newblock Convex separably rationally connected complete intersections.
\newblock {\em Proc. Amer. Math. Soc.}, 144(9):3657--3669, 2016.

\bibitem{GH}
Philip Griffiths and Joseph Harris.
\newblock {\em Principles of algebraic geometry}.
\newblock Wiley-Interscience [John Wiley \& Sons], New York, 1978.

\bibitem{Har}
Robin Hartshorne.
\newblock {\em Algebraic geometry}.
\newblock Springer-Verlag, New York-Heidelberg, 1977.
\newblock Graduate Texts in Mathematics, No. 52.

\bibitem{Hir}
F.~Hirzebruch.
\newblock {\em Topological methods in algebraic geometry}.
\newblock Third enlarged edition. New appendix and translation from the second
  German edition by R. L. E. Schwarzenberger, with an additional section by A.
  Borel. Die Grundlehren der Mathematischen Wissenschaften, Band 131.
  Springer-Verlag New York, Inc., New York, 1966.

\bibitem{Isk1}
V.~A. Iskovskih.
\newblock Fano threefolds. {I}.
\newblock {\em Izv. Akad. Nauk SSSR Ser. Mat.}, 41(3):516--562, 717, 1977.

\bibitem{Isk2}
V.~A. Iskovskih.
\newblock Fano threefolds. {II}.
\newblock {\em Izv. Akad. Nauk SSSR Ser. Mat.}, 42(3):506--549, 1978.

\bibitem{Isk3}
V.~A. Iskovskih.
\newblock Anticanonical models of three-dimensional algebraic varieties.
\newblock pages 59--157, 239 (loose errata), 1979.

\bibitem{Kane2}
Akihiro Kanemitsu.
\newblock Fano {$n$}-folds with nef tangent bundle and {P}icard number greater
  than {$n-5$}.
\newblock {\em Math. Z.}, 284(1-2):195--208, 2016.

\bibitem{KO}
Shoshichi Kobayashi and Takushiro Ochiai.
\newblock Characterizations of complex projective spaces and hyperquadrics.
\newblock {\em J. Math. Kyoto Univ.}, 13:31--47, 1973.

\bibitem{KKA}
J\'anos Koll\'ar, Karen~E. Smith, and Alessio Corti.
\newblock {\em Rational and nearly rational varieties}, volume~92 of {\em
  Cambridge Studies in Advanced Mathematics}.
\newblock Cambridge University Press, Cambridge, 2004.

\bibitem{LO}
Brian Lehmann and Christian Ottem.
\newblock Positivity of the diagonal.
\newblock Preprint arXiv:{\tt 1707.08659}, 2017.

\bibitem{Li}
Qifeng Li.
\newblock Pseudo-effective and nef cones on spherical varieties.
\newblock {\em Math. Z.}, 280(3-4):945--979, 2015.

\bibitem{Mih}
Ion~Alexandru Mihai.
\newblock Odd symplectic flag manifolds.
\newblock {\em Transform. Groups}, 12(3):573--599, 2007.

\bibitem{Mori1}
Shigefumi Mori.
\newblock Projective manifolds with ample tangent bundles.
\newblock {\em Ann. of Math. (2)}, 110(3):593--606, 1979.

\bibitem{MOSWW}
Roberto Mu{\~n}oz, Gianluca Occhetta, Luis~E. Sol{\'a}~Conde, Kiwamu Watanabe,
  and Jaros{\l}aw~A. Wi{\'s}niewski.
\newblock A survey on the {C}ampana-{P}eternell conjecture.
\newblock {\em Rend. Istit. Mat. Univ. Trieste}, 47:127--185, 2015.

\bibitem{Pan}
Rahul Pandharipande.
\newblock Convex rationally connected varieties.
\newblock {\em Proc. Amer. Math. Soc.}, 141(5):1539--1543, 2013.

\bibitem{Pe}
Cl\'elia Pech.
\newblock Quantum cohomology of the odd symplectic {G}rassmannian of lines.
\newblock {\em J. Algebra}, 375:188--215, 2013.

\bibitem{Per}
Nicolas Perrin.
\newblock On the geometry of spherical varieties.
\newblock {\em Transform. Groups}, 19(1):171--223, 2014.

\bibitem{Proc}
Robert~A. Proctor.
\newblock Odd symplectic groups.
\newblock {\em Invent. Math.}, 92(2):307--332, 1988.

\bibitem{PZ}
Yuri Prokhorov and Mikhail Zaidenberg.
\newblock Examples of cylindrical {F}ano fourfolds.
\newblock {\em Eur. J. Math.}, 2(1):262--282, 2016.

\bibitem{R}
Miles Reid.
\newblock The complete intersection of two or more quadrics, 1972.

\end{thebibliography}

\end{document}